\documentclass{amsart}

\usepackage[margin=1in]{geometry}
\usepackage{amssymb}
\usepackage{hyperref}
\usepackage{amsmath,amscd}
\usepackage[enableskew]{youngtab}
\usepackage[all,cmtip]{xy}
\usepackage{shuffle}
\usepackage{color}
\usepackage{ytableau}

\newtheorem{theorem}{Theorem}[section]
\newtheorem{lemma}[theorem]{Lemma}
\newtheorem{proposition}[theorem]{Proposition}
\newtheorem{corollary}[theorem]{Corollary}

\newtheorem*{Young's Rule}{Young's Rule}

\theoremstyle{definition}

\newtheorem{example}[theorem]{Example}

\theoremstyle{remark}
\newtheorem{remark}[theorem]{Remark}

\numberwithin{equation}{section}

\def\ZZ{{\mathbb Z}} \def\CC{{\mathbb C}} \def\FF{{\mathbb F}}
\def\SS{{\mathfrak S}} \def\HH{\mathcal{H}_\bullet(0)} \def\H{\mathcal{H}}
\def\B{{\mathfrak S}^B} \def\HB{\mathcal{H}^B} \def\D{{\mathfrak S}^D} \def\HD{\mathcal{H}^D}
\def\P{\mathcal{P}} \def\C{\mathcal{C}} \def\M{\mathcal{M}} \def\IJS{\mathcal{P}_{I,J}^S}
  
\def\bP{\mathbf{P}} \def\bC{\mathbf{C}} \def\bM{\mathbf{M}}
\def\bPB{\mathbf{P}^B} \def\bCB{\mathbf{C}^B} \def\bMB{\mathbf{M}^B}
\def\bPD{\mathbf{P}^D} \def\bCD{\mathbf{C}^D} \def\bMD{\mathbf{M}^D}
\def\bX{\mathbf{X}} \def\bx{{\mathbf x}}
\def\FB{F^B} \def\MB{M^B} \def\FD{F^D} \def\MD{M^D} 
\def\bs{{\mathbf s}} \def\bh{{\mathbf h}} 
\def\bsB{\mathbf{s}^B} \def\bhB{\mathbf{h}^B} \def\bsD{\mathbf{s}^D} \def\bhD{\mathbf{h}^D}
 \def\inv{{\rm inv}}  
\def\neg{\mathrm{neg}}  \def\nsp{\mathrm{nsp}} 
\def\rad{\mathrm{rad}} \def\Hom{\operatorname{Hom}}
\def\cleq{\operatorname{\preccurlyeq}} 
\def\pib{\overline{\pi}} \def\rev{\operatorname{rev}}
\def\qand{\quad\text{and}\quad}
\DeclareMathOperator\modelsB{\models^B} \DeclareMathOperator\modelsD{\models^D}
\def\Sym{\mathrm{Sym}} \def\QSym{\mathrm{QSym}} \def\NSym{\mathbf{NSym}}

\newcommand\qbin[3]{\left[\begin{matrix} #1 \\ #2 \end{matrix} \right]_{#3}}

\begin{document}

\title[Tableau approach to representations of 0-Hecke algebras]{A tableau approach to the representation theory of 0-Hecke algebras}
\author{Jia Huang}
\address{Department of Mathematics and Statistics, University of Nebraska at Kearney, Kearney, NE 68849, USA}
\email{huangj2@unk.edu}
\subjclass[2010]{05E05, 05E10}
\keywords{0-Hecke algebra, tableau, ribbon, Grothendieck group, quasisymmetric function, noncommutative symmetric function, type B, type D, antipode, skew element}
\thanks{The author thanks Victor Reiner for helpful conversations and encouragement.}

\begin{abstract}
A 0-Hecke algebra is a deformation of the group algebra of a Coxeter group. Based on work of Norton and Krob--Thibon, we introduce a tableau approach to the representation theory of 0-Hecke algebras of type A, which resembles the classic approach to the representation theory of symmetric groups by Young tableaux and tabloids. We extend this approach to type B and D, and obtain a correspondence between the representation theory of 0-Hecke algebras of type B and D and quasisymmetric functions and noncommutative symmetric functions of type B and D. Other applications are also provided. 
\end{abstract}

\maketitle

\section{Introduction}

The symmetric groups are fascinating and useful in many areas of mathematics. In particular, the representation theory of symmetric groups is related to the theory of symmetric functions via the classic Frobenius correspondence. More precisely, there is a graded Hopf algebra isomorphism between the Grothendieck group $G_0(\CC\SS_\bullet)$ associated with the tower $\CC\SS_0\hookrightarrow\CC\SS_1\hookrightarrow\CC\SS_2\hookrightarrow\cdots$ of (complex) group algebras of symmetric groups and the ring $\Sym$ of symmetric functions. This correspondence has many important applications in algebraic combinatorics.

The \emph{0-Hecke algebra $H_n(0)$ of type A} is generated by the \emph{bubble-sorting operators}, giving a deformation of the group algebra of $\SS_n$. In fact, there is a 0-Hecke algebra associated with any finite Coxeter system, whose representation theory was obtained by Norton~\cite{Norton} from a purely algebraic perspective. In type A this was also studied by Krob--Thibon~\cite{KrobThibon} and others from a more combinatorial point of view.

It is now well known to the experts that, analogously to the Frobenius correspondence for symmetric groups, the representation theory of 0-Hecke algebras of type A is related to the theory of \emph{quasisymmetric functions} and \emph{noncommutative symmetric functions}. In fact, there are two Grothendieck groups $G_0(\HH)$ and $K_0(\HH)$ associated with the tower $\HH: \H_0(0)\hookrightarrow \H_1(0)\hookrightarrow \H_2(0)\hookrightarrow \cdots$ of 0-Hecke algebras of type A. The induction and restriction of representations along the embedding $\H_m(0)\otimes\H_n(0)\hookrightarrow \H_{m+n}(0)$ endows $G_0(\HH)$ and $K_0(\HH)$ dual graded Hopf algebra structures which are isomorphic to the graded Hopf algebra $\QSym$ of \emph{quasisymmetric functions} and the dual graded Hopf algebra $\NSym$ of \emph{noncommutative symmetric functions}. This connects the representation theory of 0-Hecke algebras of type A to many combinatorial objects~\cite{H0CF, H0SR, H0QSchur}.

The proof of this correspondence, however, is somewhat scattered in the literature. Malvenuto and Reutenauer~\cite{MR} showed that $\QSym$ and $\NSym$ are dual graded Hopf algebras. Krob and Thibon~\cite{KrobThibon} showed that $G_0(\HH)$ and $K_0(\HH)$ are isomorphic to $\NSym$ and $\QSym$ as graded algebras via the \emph{quasisymmetric characteristic} $\mathrm{Ch}$ and \emph{noncommutative characteristic} $\mathbf{ch}$. Bergeron and Li~\cite{BergeronLi} introduced a set of conditions for a tower of graded algebras to have two Grothendieck groups being dual Hopf algebras, and verified these conditions for the tower of $0$-Hecke algebras of type A. Combining these results together one sees that $G_0(\HH)$ and $K_0(\HH)$ are isomorphic to $\NSym$ and $\QSym$ as dual graded Hopf algebras.

It is natural to ask how to extend this correspondence from type A to type B and D. We address this question by adopting a tableau approach to the representation theory of 0-Hecke algebras. It is well known that the Specht modules over symmetric groups can be constructed using Young tableaux and tabloids. In our earlier work~\cite{H0CF, H0SR}, we briefly mentioned an analogous tableau approach to the representation theory of 0-Hecke algebras of type A, based on work of Krob and Thibon~\cite{KrobThibon}. In this paper we provide a detailed exposition for this tableau approach and use it to obtain analogues of permutation modules, Specht modules, and Young's rule. We also use it to establish explicit coproduct formulas for $K_0(\HH)$ and $\NSym$, directly showing that the noncommutative characteristic $\mathbf{ch}: K_0(\HH)\to\NSym$ is an isomorphism of coalgebras. Note that the duality between this coalgebra isomorphism and the algebra isomorphism $\mathrm{Ch}: G_0(\HH)\to\QSym$ is not governed by Frobenius reciprocity, but rather by a non-standard property verified for 0-Hecke algebras of type A by Bergeron and Li~\cite{BergeronLi}. 

Analogously in type B and D, we use tableaux to study representations of 0-Hecke algebras and obtain analogues of permutation modules, Specht modules, and Young's rule. This naturally leads to a graded right $K_0(\H_\bullet(0))$-module structure on the Grothendieck group $K_0(\HB_\bullet(0))$ of the tower $\HB_\bullet(0)$ of 0-Hecke algebras of type B, as well as a type B analogue $\NSym^B$ of $\NSym$, which was previously introduced by Chow~\cite{Chow} as a free graded right $\NSym$-module but is now realized as certain formal power series in noncommutative variables. We define a type B noncommutative characteristic $\mathbf{ch}^B: K_0(\HB_\bullet(0))\to\NSym^B$ which gives a module isomorphism. Using  a type B analogue $\QSym^B$ of $\QSym$ introduced by Chow~\cite{Chow}, we define a type B quasisymmetric characteristic $\mathrm{Ch}^B: G_0(\HB_\bullet(0))\to\QSym^B$ which gives a comodule isomorphism dual to $\mathbf{ch}^B$ by the Frobenius reciprocity. We also obtain similar results in type D.

We next discuss some remaining problems in type B; similar problems also exist in type D. The $K_0(\H_\bullet(0))$-module structure on $K_0(\HB_\bullet(0))$ and the $G_0(\H_\bullet(0))$-comodule structure on $G_0(\HB_\bullet(0))$ are defined by induction and restriction of representations of 0-Hecke algebras along embeddings of the form $\HB_m(0)\otimes\H_n(0)\hookrightarrow\HB_{m+n}(0)$, where $\HB_m(0)\otimes\H_n(0)$ is identified with a parabolic subalgebra of $\HB_{m+n}(0)$. One can similarly define a $G_0(\H_\bullet(0))$-module structure on $G_0(\HB_\bullet(0))$ and a $K_0(\H_\bullet(0))$-comodule structure on $K_0(\HB_\bullet(0))$. However, it is not clear whether $\mathrm{Ch}^B$ is a module isomorphism or $\mathbf{ch}^B$ is a comodule isomorphism, nor is the duality between them as the Frobenius reciprocity does not apply to such situation. We will solve these problems using a different approach in another paper~\cite{ChH0}. But we do not have an algebra or coalgebra structure on the Grothendieck groups of $\HB_\bullet(0)$, since we are not able to find embeddings of the form $\HB_m(0)\otimes\HB_n(0)\hookrightarrow\HB_{m+n}(0)$. 

Some other results follow naturally from our tableau approach. First, it is known that the antipode of the Hopf algebra $\Sym$ corresponds to tensoring representations of a symmetric group with the sign representation. We show that the antipodes of the Hopf algebras $\QSym$ and $\NSym$ can be interpreted by taking transpose of the tableaux that we use to study representations of 0-Hecke algebras of type A. We also find similar symmetries among tableaux in type B and D, respectively. We describe these symmetries uniformly using two automorphisms of 0-Hecke algebras studied by Fayers~\cite{Fayers}.

Next, the Specht modules of the symmetric group $\SS_n$ can also be obtained by using the $\SS_n$-action on the polynomial ring $\FF[x_1,\ldots,x_n]$. We provide an analogue of this using the $\H_n(0)$-action on $\FF[x_1,\ldots,x_n]$ via the \emph{Demazure operators}.

Next, Lam, Lauve, and Sottile~\cite{Skew} studied skew elements in the dual graded Hopf algebras $\QSym$ and $\NSym$, and provided a formula for the former. Now we provide a formula for the latter thanks to our coproduct formula for $\NSym$.

Finally, applying the quasisymmetric characteristic map to certain representations of 0-Hecke algebras gives some combinatorial identities involving quasisymmetric functions, $q$-multinomial coefficients, and $q$-ribbon numbers.

This paper is structured in the following way. After providing some preliminaries in Section~\ref{sec:pre}, we use tableaux to study the representation theory of 0-Hecke algebras of type A, B, and D in Section~\ref{sec:rep}, and investigate its connections with quasisymmetric functions and noncommutative symmetric functions of type A, B, and D in Section~\ref{sec:Char}. Other applications are included in Section~\ref{sec:other}. 

\section{Preliminaries}\label{sec:pre}

\subsection{Algebras and their representations}\label{sec:algebra}

We review some general results on the representation theory of (unital associative) algebras. See, e.g., \cite[\S I]{ASS} for more details. 
Let $A$ be an algebra over a field $\FF$ and let $M$ be a (left) $A$-module. 
We say $M$ is \emph{simple} if it has no submodule other than $0$ and itself. 
We say $M$ is \emph{semisimple} if it is a direct sum of simple $A$-modules. 
The algebra $A$ is \emph{semisimple} if itself is a semisimple $A$-module, or equivalently, if every $A$-module is semisimple. 
The \emph{radical} $\rad(M)$ of $M$ is the intersection of all maximal submodules of $M$. The \emph{top of $M$} is ${\rm top}(M):=M/\rad(M)$, which is always semisimple. 
We say $M$ is \emph{indecomposable} if a decomposition $M=L\oplus N$ of $M$ into a direct sum of two submodules $L,N$ implies either $L=0$ or $N=0$. 
We say $M$ is \emph{projective} if $M$ is a direct summand of a free $A$-module.

Let $B$ be a subalgebra of $A$. For any $A$-module $M$ and $B$-module $N$, the induced $A$-module $N\uparrow\,_B^A$ is defined as $A\otimes_B N$, and the restricted $B$-module $M\downarrow\,_B^A$ is $M$ itself viewed as a $B$-module. 
When $A$ and $B$ are group algebras the Frobenius Reciprocity asserts ${\rm Hom}_A(N\uparrow\,_B^A,M) \cong {\rm Hom}_B(N,M\downarrow\,_B^A)$, which also holds for 0-Hecke algebras~\cite[Example~4.3]{BergeronLi}, \cite[Remark~2.3.7]{ChH0}. 
But in general ${\rm Hom}_A(M, N\uparrow\,_B^A) \not\cong {\rm Hom}_B(M\downarrow\,_B^A,N)$.

Let $A=\P_1\oplus\cdots\oplus \P_m$ be a decomposition of $A$ into a direct sum of indecomposable submodules $\P_1,\ldots,\P_m$. In general $\P_i$ is \emph{not} simple, but its top $\C_i:={\rm top}(\P_i)$ is, for all $i=1,\ldots,m$. Moreover, every simple $A$-module is isomorphic to some $\C_i$, and every projective indecomposable $A$-module is isomorphic to some $\P_j$. 

The \emph{Grothendieck group $G_0(A)$ of the category of all finitely generated $A$-modules} is defined as the abelian group $F/F'$, where $F$ is the free abelian group on the isomorphism classes $[M]$ of all finitely generated $A$-modules $M$, and $F'$ is the subgroup of $F$ generated by the elements $[M]-[L]-[N]$ corresponding to all short exact sequences $0\to L\to M\to N\to0$ of finitely generated $A$-modules. We write $M$ for the image of $[M]$ in $F/F'$ if it causes no confusion. The \emph{Grothendieck group $K_0(A)$ of the category of all finitely generated projective $A$-modules} is defined similarly. It turns out that $G_0(A)$ and $K_0(A)$ are free abelian groups on $\{C_1,\ldots,\C_m\}$ and $\{P_1,\ldots,\P_m\}$, respectively. Since short exact sequences of projective modules split, one has $P=P'+P''$ in $K_0(A)$ if and only if $P=P'\oplus P''$. 

Associated with a tower of algebras $A_\bullet: A_0\hookrightarrow A_1\hookrightarrow A_2\cdots$ are two Grothendieck groups
\[ G_0(A_\bullet) := \bigoplus_{n\geq0} G_0(A_n) \qand
K_0(A_\bullet) := \bigoplus_{n\geq0} K_0(A_n). \]
Suppose that there is an algebra embedding $A_m\otimes A_n\hookrightarrow A_{m+n}$ for all pairs of nonnegative integers $m$ and $n$. Bergeron and Li~\cite{BergeronLi} showed that, under certain assumptions---which are, in particular, satisfied by the tower of group algebras of symmetric groups and the tower of 0-Hecke algebras of type A---the Grothendieck groups $G_0(A_\bullet)$ and $K_0(A_\bullet)$ become dual graded Hopf algebra whose product and coproduct are given by 
\[ M\,\widehat\otimes\, N := (M\otimes N)\uparrow\,_{A_m\otimes A_n}^{A_{m+n}} \qand 
\Delta M := \sum_{0\le i\le m} M\downarrow\,_{A_i\otimes A_{m-i}}^{A_m} \]
for all $M\in G_0(A_m)$ (or $K_0(A_m)$) and $N\in G_0(A_n)$ (or $K_0(A_n)$). The duality is given by the pairing
\[ \langle P,M \rangle:=
\begin{cases}
\dim_\FF\Hom_{A_m} (P,M), & {\rm if}\ P\in K_0(A_m), M\in G_0(A_m),\\
0, & {\rm otherwise}.
\end{cases} \]

\subsection{Coxeter groups}

A \emph{finite Coxeter group} $W$ is a finite group generated by a set $S$ with relations 
$s^2=1$ for all $s\in S$ and $(sts\cdots)_{m_{st}} = (tst\cdots)_{m_{st}}$ for all distinct $s,t\in S$, where $m_{st} = m_{ts} \in\{2,3,\ldots\}$ and $(aba\cdots)_m$ denotes an alternating product of $m$ terms. The pair $(W,S)$ is called a \emph{finite Coxeter system}. An element $w\in W$ can be expressed as $w=s_{i_1}\cdots s_{i_k}$ where $s_{i_1},\ldots,s_{i_k}$ belong to $S$. Such an expression of $w$ is called \emph{reduced} if $k$ is as small as possible, and the smallest $k$ is the \emph{length} $\ell(w)$ of $w$.

If $s\in S$ then $\ell(ws)=\ell(w)-1$ or $\ell(ws)=\ell(w)+1$. The former happens if and only if $w$ has a reduced expression ended with $s$, and in such case $s$ is called a \emph{descent} of $w$. The \emph{descent set} $D(w)$ of $w$ consists of all its descents.

Let $I\subseteq S$ and $I^c:=S\setminus I$. The \emph{parabolic subgroup} $W_I$ is the subgroup of $W$ generated by $I$, and $(W_I,I)$ is a Coxeter subsystem of $(W,S)$. 
An element $w\in W$ can be written uniquely as $w = z u$ where $z\in W^I$ and $u\in W_I$, and one has $\ell(w) = \ell(z) + \ell(u)$. Here $W^I:=\{ z\in W: D(z)\subseteq I^c \}$ is the set of length-minimal representatives for left $W_I$-cosets.

\begin{theorem}[{Bj\"orner and Wachs~\cite[Theorem~6.2]{BjornerWachs}}]\label{thm:interval}
Let $I\subseteq J\subseteq S$. Then $\{ w\in W: I\subseteq D(w)\subseteq J\}$ is an interval $[w_0(I),w_1(J)]$ under the left weak order of $W$, where $w_0(I)$ and $w_1(J)$ are the longest elements in $W_{I}$ and $W^{J^c}$, respectively.
\end{theorem}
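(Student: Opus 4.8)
The plan is to realize the set $T:=\{w\in W: I\subseteq D(w)\subseteq J\}$ as the intersection of a principal order filter and a principal order ideal in the left weak order, whose bottom and top turn out to be exactly $w_0(I)$ and $w_1(J)$. Throughout I write $u\le_L w$ to mean $w=xu$ for some $x$ with $\ell(w)=\ell(x)+\ell(u)$, and I let $w_0$ denote the longest element of $W$. The first thing I would record is a monotonicity principle: if $u\le_L w$ then $D(u)\subseteq D(w)$. Indeed, a reduced word for $w$ ending in a reduced word for $u$ keeps every right descent of $u$ a right descent of $w$, and a one-line length computation makes this precise. This gives one inclusion in each of the two reformulations below, so the real content lies in the reverse inclusions.

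The key step is to prove $\{w: I\subseteq D(w)\}=\{w: w_0(I)\le_L w\}$. For the nontrivial inclusion I would use the parabolic factorization $w=zu$ with $z\in W^I$, $u\in W_I$, $\ell(w)=\ell(z)+\ell(u)$ recorded before the theorem, together with the length-additivity $\ell(zv)=\ell(z)+\ell(v)$ valid for every $v\in W_I$. For $s\in I$ one has $ws=z(us)$ with $us\in W_I$, hence $\ell(ws)=\ell(z)+\ell(us)$; comparing with $\ell(w)=\ell(z)+\ell(u)$ yields $s\in D(w)\iff s\in D(u)$. Thus $D(w)\cap I=D(u)$, and since $D(u)\subseteq I$, the hypothesis $I\subseteq D(w)$ forces $D(u)=I$, i.e.\ $u=w_0(I)$ (the longest element being the unique element of $W_I$ with full descent set). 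Then $w=z\,w_0(I)$ with lengths adding, which is precisely $w_0(I)\le_L w$. Combined with monotonicity for the converse, this identifies $\{w: I\subseteq D(w)\}$ with the principal filter generated by $w_0(I)$.

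To handle the upper constraint I would pass to the order-reversing involution $\phi(w)=w_0w$ of $(W,\le_L)$, which satisfies $\ell(\phi(w))=\ell(w_0)-\ell(w)$ and $D(\phi(w))=S\setminus D(w)$; both follow from $\ell(w_0x)=\ell(w_0)-\ell(x)$, and the order-reversal is visible on covers. Under $\phi$ the condition $D(w)\subseteq J$ becomes $J^c\subseteq D(\phi(w))$, so applying the key step with $I$ replaced by $J^c$ converts the desired ideal statement $\{w: D(w)\subseteq J\}=\{w: w\le_L w_1(J)\}$ into the filter statement already proved, provided I check $\phi(w_1(J))=w_0(J^c)$, equivalently $w_1(J)=w_0\,w_0(J^c)$. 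This last identity I would obtain by factoring $w_0=z\,u$ with $z\in W^{J^c}$, $u\in W_{J^c}$: since $D(w_0)=S$, the computation of the previous paragraph gives $u=w_0(J^c)$, and a length comparison shows $z=w_0\,w_0(J^c)$ is the longest element of $W^{J^c}$, namely $w_1(J)$.

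Finally I would intersect the two descriptions to get $T=\{w: w_0(I)\le_L w\}\cap\{w: w\le_L w_1(J)\}=[w_0(I),w_1(J)]$, the interval being nonempty because $D(w_0(I))=I\subseteq J$ places $w_0(I)$ in the ideal, whence $w_0(I)\le_L w_1(J)$. I expect the main obstacle to be the bookkeeping in the key step: extracting the descent equivalence $D(w)\cap I=D(u)$ from the parabolic factorization and invoking $\ell(zv)=\ell(z)+\ell(v)$ correctly. Everything else---monotonicity, the $\phi$-symmetry, and the formula for $w_1(J)$---reduces to short length computations once this lemma is in hand.
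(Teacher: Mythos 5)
Your proof is correct. Note that the paper itself offers no proof of this statement --- it is quoted verbatim from Bj\"orner and Wachs --- so there is nothing internal to compare against; your argument is essentially the standard one. The decomposition into a principal filter $\{w: w_0(I)\le_L w\}$ and a principal ideal $\{w: w\le_L w_1(J)\}$, with the ideal obtained from the filter via the order-reversing involution $w\mapsto w_0w$, is sound, and the key step (using the parabolic factorization $w=zu$ to show $D(w)\cap I=D(u)$ and hence $u=w_0(I)$ when $I\subseteq D(w)$) is carried out correctly. The only external ingredient beyond the factorization facts already recorded in the paper is the standard statement that $w_0(I)$ is the unique element of $W_I$ with descent set equal to $I$; you invoke it parenthetically, which is acceptable since it has an independent proof (e.g.\ via the lifting property in Bruhat order) and so introduces no circularity. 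Your identification $w_1(J)=w_0\,w_0(J^c)$ and the nonemptiness check $w_0(I)\le_L w_1(J)$ close the argument.
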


In particular, the \emph{descent class} $\{w\in W: D(w)=I\}$ of $I\subseteq S$ is an interval $[w_0(I),w_1(I)]$ under the left weak order of $W$. Let $w_0:=w_0(S)$ be the longest element of $W$. The automorphism $w\mapsto w_0ww_0$ of $W$ is determined by an automorphism $\sigma$ of the Coxeter diagram of $(W,S)$---see, for example, Fayers~\cite[Proposition~2.4]{Fayers}. It is well known that $w\mapsto ww_0$ and $w\mapsto w_0w$ give two anti-isomorphisms of the weak order of $W$, sending the descent class of $I$ to the descent classes of $I^c$ and $\sigma(I^c)$, respectively.

The \emph{symmetric group $\SS_n$} consists of all permutations of the set $[n]:=\{1,2,\ldots,n\}$. It is the Coxeter group of type $A_{n-1}$, generated by the \emph{adjacent transpositions} $s_i:=(i,i+1)$, $i=1,2,\ldots,n-1$, with the quadratic relations $s_i^2=1$, $1\le i \le n-1$, and the braid relations 
\[\begin{cases}
s_is_{i+1}s_i = s_{i+1}s_is_{i+1},\ 1\le i\le n-2,\\
s_is_j=s_js_i,\ 1\le i,j\le n-1,\ |i-j|>1.
\end{cases}\]
The length of $w\in\SS_n$ is $\inv(w):=\#\{(i,j):1\le i<j\le n,\ w(i)>w(j)\}$ and the descent set of $w\in\SS_n$ is $D(w)=\{ 1\le i\le n-1: w(i)>w(i+1)\}$ where we identify a generator $s_i$ with its subscript $i$.

It is often convenient to use compositions to study symmetric groups. A \emph{composition} is a sequence $\alpha=(\alpha_1,\ldots,\alpha_\ell)$ of positive integers. The \emph{length} of $\alpha$ is $\ell(\alpha):=\ell$ and the \emph{size} of $\alpha$ is $|\alpha|:=\alpha_1+\cdots+\alpha_\ell$. We say $\alpha$ is a composition of $n$ and write $\alpha\models n$ if $|\alpha|=n$. The \emph{descent set} of $\alpha$ is 
$
D(\alpha):=\{\alpha_1,\alpha_1+\alpha_2,\ldots,\alpha_1+\cdots+\alpha_{\ell-1}\}.
$
One sees that $\alpha\mapsto D(\alpha)$ is a bijection between compositions of $n$ and subsets of $[n-1]:=\{1,\ldots,n-1\}$. We write $\alpha\cleq\beta$ if $\alpha$ and $\beta$ are two compositions of the same size such that $D(\alpha)\subseteq D(\beta)$, or in other words, $\alpha$ is refined by $\beta$.

Given a composition $\alpha=(\alpha_1,\ldots,\alpha_\ell)$ of $n$, there is a parabolic subgroup $\SS_\alpha\cong\SS_{\alpha_1}\times\cdots\times\SS_{\alpha_\ell}$ of $\SS_n$ generated by $\{s_i:i\in [n-1]\setminus D(\alpha)\}$. A complete set of length-minimal representatives for left $\SS_\alpha$-cosets in $\SS_n$ is given by
$ \SS^\alpha:= \left \{ w \in\SS_n: D(w)\subseteq D(\alpha) \right \}. $
The \emph{descent class of $\alpha$} consists of all permutations $w$ in $\SS_n$ with $D(w)=D(\alpha)$, which is an interval under the left weak order of $\SS_n$, denoted by $[w_0(\alpha),w_1(\alpha)]$. 

\subsection{Tableaux and Symmetric functions}
A sequence $\lambda=(\lambda_1,\ldots,\lambda_\ell)$ of positive integers such that $\lambda_1\ge\cdots\ge\lambda_\ell$ and  $|\lambda|:=\lambda_1+\cdots+\lambda_\ell=n$ is called a \emph{partition of $n$} and denoted by $\lambda\vdash n$. A partition $\lambda$ is identified with its \emph{Young diagram}, and its \emph{transpose} (or \emph{conjugate}) $\lambda^t$ is obtained by reflecting its Young diagram along the main diagonal. Deleting a Young diagram $\mu$ contained in $\lambda$ gives a \emph{skew diagram} $\lambda/\mu$. See examples below.\vskip5pt
\[ \begin{matrix}
\raisebox{-15pt}{\young(\hfill\hfill\hfill\hfill,\hfill\hfill\hfill,\hfill\hfill,\hfill\hfill)} \qquad&
\raisebox{-15pt}{\young(\hfill\hfill\hfill\hfill,\hfill\hfill\hfill\hfill,\hfill\hfill,\hfill)} \qquad&
\raisebox{-15pt}{\young(::\hfill\hfill,:\hfill\hfill,\hfill\hfill,\hfill\hfill)} \\ \\
\lambda = (4,3,2,2) \qquad& \lambda^t = (4,4,2,1) \qquad & (4,3,2,2)/(2,1)
\end{matrix}\]\vskip5pt

A \emph{semistandard tableau $\tau$ of shape $\lambda/\mu$} is a filling of the skew diagram $\lambda/\mu$ by positive integers such that every row weakly increases from left to right and every column strictly increases from top to bottom. Reading these integers from the bottom row to the top row and proceeding from left to right within each row gives the \emph{reading word} $w(\tau)$ of $\tau$. A semistandard tableau $\tau$ of shape $\lambda/\mu$ is a \emph{standard} if its reading word $w(\tau)$ is a permutation in $\SS_n$ where $n=|\lambda|-|\mu|$. The following example shows a semistandard tableau  and a standard tableau, both of skew shape $(4,3,2,2)/(2,1)$.
\vskip5pt\[ \young(::46,:15,22,34) \qquad\qquad\qquad \young(::68,:17,23,45) \]\vskip5pt

Let $X=\{x_1,x_2,\ldots\}$ be a totally ordered set of commutative variables. The \emph{Schur function} of skew shape $\lambda/\mu$ is the sum of $x_\tau$ for all semistandard tableaux $\tau$ of shape $\lambda/\mu$, where $x_{\tau}:=x_{w_1}\cdots x_{w_n}$ if $w(\tau)=w_1\cdots w_n$.
The \emph{ring of symmetric functions}, denoted by $\Sym$, is the free $\ZZ$-module with a basis consisting of Schur functions $s_\lambda:=s_{\lambda/\emptyset}$ for all partitions $\lambda$. The multiplication of formal power series defines a product for $\Sym$. The coproduct $\Delta$ of $\Sym$ is defined by $\Delta f(X):=f(X+Y)$ for all $f\in\QSym$, where $X+Y=\{x_1,x_2,\ldots,y_1,y_2,\ldots\}$ is a totally ordered set of commutative variables. It is well known that $\Sym$ is a self-dual graded Hopf algebra containing all skew Schur functions. 

\subsection{Representation theory of symmetric groups}
We review from Sagan~\cite{Sagan} and Stanley~\cite[Chapter 7]{EC2} the representation theory of symmetric groups and its connections with symmetric functions.

The Grothendieck groups $G_0(\CC\SS_\bullet)$ and $K_0(\CC\SS_\bullet)$ of the tower of algebras $\CC\SS_\bullet: \CC\SS_0\hookrightarrow\CC\SS_1\hookrightarrow\CC\SS_2\hookrightarrow\cdots$ are the same, since the group algebra $\CC\SS_n$ is semisimple. The simple $\CC\SS_n$-modules $S^\lambda$ are indexed by partitions $\lambda$ of $n$ and can be constructed using Young tableaux and tabloids of shape $\lambda$, as described below.

Let $\lambda\vdash n$. A \emph{Young tableau of shape $\lambda$} is a filling of the Young diagram of $\lambda$ with $1,2,\ldots,n$, each number occurring exactly once. 
A \emph{tabloid of shape $\lambda$} is the equivalence classes of Young tableaux of shape $\lambda$ under permutations of entries in the same row. The \emph{permutation module} $M^\lambda$ has a basis consisting of tabloids of shape $\lambda$, with an $\SS_n$-action by permuting the entries in these tabloids. The \emph{polytabloid} $e_t$ of a Young tableau $t$ of shape $\lambda$ is 
\[ e_t := \sum_{ w \in C_t } {\rm sgn}(w) w(t) \quad \text{inside}\ \CC\SS_n\]
where $C_t$ is the subgroup of $\SS_n$ preserving every column of $t$. The \emph{Specht module} $S^\lambda$ is the submodule of $M^\lambda$ spanned by polytabloids $e_t$ for all Young tableaux $t$ of shape $\lambda$ and has a basis consisting of those $e_t$ with $t$ standard. 

\begin{Young's Rule}
If $\lambda$ is a partition of $n$ then
$ M^\lambda \cong 1 \uparrow\,_{\SS_\lambda}^{\SS_n} \cong \bigoplus_{\lambda\unlhd\mu} K_{\mu\lambda} S^\mu $
where $1$ denotes the trivial representation, $\unlhd$ is the dominance order, and $K_{\lambda\mu}$ is the \emph{Kostka number}. 
\end{Young's Rule}

Using the embeddings $\SS_m\times\SS_n\cong \SS_{m,n}\subseteq \SS_{m+n}$ one defines a product and a coproduct for $G_0(\CC\SS_\bullet)$ by 
\[ M\,\widehat\otimes\, N := (M\otimes N)\uparrow\,_{\SS_m\times\SS_n}^{\SS_{m+n}} \qand
\Delta M := \sum_{0\le i\le m} M\downarrow\,_{\SS_i\times\SS_{m-i}}^{\SS_m} \]
for all $M\in G_0(\CC\SS_m)$ and $N\in G_0(\CC\SS_n)$. 
The \emph{Frobenius characteristic} ch: $G_0(\CC\SS_\bullet)\to\Sym$ sends the Specht module $S^\lambda$ to the Schur function $s_\lambda$ for all partitions $\lambda$, giving an isomorphism of graded Hopf algebras.

\section{Representation theory of 0-Hecke algebras}\label{sec:rep}

In this section we give a tableau approach to the representation theory of 0-Hecke algebras of type A, B, and D.

\subsection{0-Hecke algebras of finite Coxeter systems}\label{sec:HW0}
Let $W$ be a finite Coxeter group generated by a set $S$ with relations $s^2=1$ for all $s\in S$ and $(sts\cdots)_{m_{st}} = (tst\cdots)_{m_{st}}$ for all distinct $s,t\in S$.

The \emph{0-Hecke algebra} $\H_W(0)$ of the Coxeter system $(W,S)$ is a deformation of the group algebra of $W$, defined as an algebra over a field $\FF$ generated by $\{\pi_s:s\in S\}$ with relations $\pi_s^2 = \pi_s$ for all $s\in S$ and $(\pi_s\pi_t\pi_s\cdots)_{m_{st}} = (\pi_t\pi_s\pi_t\cdots)_{m_{st}}$ for all distinct $s, t \in S$. 

Let $\pib_s := \pi_s-1$. Then $\pib_s\pi_s=0$.
One can check that $\H_W(0)$ is also generated by $\{\pib_s:s\in S\}$ with relations $\pib_s^2 = -\pib_s$ for all $s\in S$ and $(\pib_s\pib_t\pib_s\cdots)_{m_{st}} = (\pib_t\pib_s\pib_t\cdots)_{m_{st}}$ for all distinct $s, t \in S$.

If an element $w\in W$ has a reduced expression $w=s_{i_1}\cdots s_{i_k}$ then $\pi_w:=\pi_{s_1}\cdots\pi_{s_k}$ and $\pib_w:=\pib_{s_1}\cdots\pib_{s_k}$ are both well defined, thanks to the Word Property of $W$~\cite[Theorem~3.3.1]{BjornerBrenti}. Moreover, the two sets $\{\pi_w:w\in W\}$ and $\{\pib_w:w\in W\}$ are both $\FF$-bases for the 0-Hecke algebra $\H_W(0)$.

The representation theory of $\H_W(0)$ was studied by Norton~\cite{Norton}. Below is a summary of her main results.

\begin{theorem}\label{thm:Norton}
(i) Let $I\subseteq S$. The $\H_W(0)$-module $\P_I^S:=\H_W(0)\pib_{w_0(I)}\pi_{w_0(I^c)}$ is indecomposable and has a basis
\[
\{\pib_w\pi_{w_0(I^c)}: w\in W,\ D(w) = I \}.
\] 
 The top $\C_I^S$ of $\P_I^S$ is a one-dimensional $\H_W(0)$-module on which $\pib_i$ acts by either $-1$ if $i\in I$ or $0$ otherwise.

\noindent (ii) Let $J\subseteq S$. The $\H_W(0)$-module $\M_J^S:=\H_W(0)\pi_{w_0(J^c)}$ has a basis 
\[ \{ \pib_w\pi_{w_0(J^c)}: w\in W,\ D(w)\subseteq J \} \]
and decomposes as
\[ \M_J^S = \bigoplus_{I\subseteq J} \P_I^S. \]
In particular, this gives a decomposition of $\H_W(0)$ itself as an $\H_W(0)$-module into a direct sum of indecomposable submodules when $J=S$.
\end{theorem}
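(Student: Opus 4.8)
The plan is to work entirely in the basis $\{\pib_w : w\in W\}$ and to exploit the left-multiplication rule $\pib_s\pib_w = \pib_{sw}$ if $\ell(sw)>\ell(w)$ and $\pib_s\pib_w = -\pib_w$ otherwise, together with the absorption identity $\pib_s\pi_{w_0(K)}=0$ for every $s\in K$ (valid because $s$ is a left descent of the longest element $w_0(K)$ of $W_K$, so $\pib_s\pi_{w_0(K)} = \pib_s\pi_s\pi_{s w_0(K)} = 0$). The first consequence I would record is that $\pib_v\pi_{w_0(K)}=0$ whenever $D(v)\cap K\neq\emptyset$: writing a reduced word of $v$ ending in some $s\in K$ peels off a factor $\pib_s$ that annihilates $\pi_{w_0(K)}$. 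Conversely, if $D(v)\cap K=\emptyset$ then $v$ is the minimal-length element of $vW_K$, so $\ell(vu)=\ell(v)+\ell(u)$ for all $u\in W_K$, and since $\pi_{w_0(K)}=\pib_e+(\text{terms of length}\ge 1)$ we get $\pib_v\pi_{w_0(K)}=\pib_v+(\text{strictly longer terms})$; in particular it is nonzero with lowest term $\pib_v$. Taking $K=J^c$ immediately collapses the spanning set $\{\pib_w\pi_{w_0(J^c)}:w\in W\}$ of $\M_J^S$ onto those $w$ with $D(w)\subseteq J$, and the distinct lowest terms $\pib_w$ prove these are independent; this is the asserted basis of $\M_J^S$.

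For the basis of $\P_I^S=\H_W(0)\,\xi_I$ with $\xi_I:=\pib_{w_0(I)}\pi_{w_0(I^c)}$ in (i), I would first simplify $\pib_w\pib_{w_0(I)}$. Factoring $w=zu$ with $z\in W^I$, $u\in W_I$ and applying $\pib_s\pib_{w_0(I)}=-\pib_{w_0(I)}$ for each $s\in I$ gives $\pib_w\pib_{w_0(I)}=(-1)^{\ell(u)}\pib_{zw_0(I)}$. As $z$ ranges over $W^I$ the products $zw_0(I)$ range over $\{v:D(v)\supseteq I\}$, and the absorption consequence with $K=I^c$ kills exactly those $v$ with $D(v)\not\subseteq I$. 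What survives is the family $\{\pib_v\pi_{w_0(I^c)}:D(v)=I\}$, again independent by distinct lowest terms; this proves the basis of $\P_I^S$ and shows $\dim_\FF \P_I^S=\#\{w:D(w)=I\}$.

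The decomposition in (ii) is the heart of the matter. That $\P_I^S\subseteq\M_J^S$ for $I\subseteq J$ I would get from right-absorption: since $J^c\subseteq I^c$, each $\pi_s$ with $s\in J^c$ is a right descent of $w_0(I^c)$, so $\pi_{w_0(I^c)}\pi_{w_0(J^c)}=\pi_{w_0(I^c)}$ and hence $\pi_{w_0(I^c)}\in\H_W(0)\pi_{w_0(J^c)}$, giving $\P_I^S\subseteq\M_J^S$. It then remains to see that the combined family $\{\pib_v\pi_{w_0(I^c)}:I\subseteq J,\ D(v)=I\}$ is linearly independent inside $\M_J^S$. I expect this to be the main obstacle, and the key realization is that the naive comparison by the \emph{longest} $\pib$-term fails---distinct descent classes can produce the same longest term---so one must instead compare by the \emph{lowest} term $\pib_v$: each element of the family has lowest term $\pib_v$ with coefficient $1$, and as $(I,v)$ varies these indices $v$ are pairwise distinct because $v$ determines $I=D(v)$. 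A shortest-$v$ argument then forces any vanishing combination to be trivial. Since the family has $\sum_{I\subseteq J}\#\{w:D(w)=I\}=\#\{w:D(w)\subseteq J\}=\dim_\FF\M_J^S$ members, it is a basis; regrouping by $I$ yields $\M_J^S=\bigoplus_{I\subseteq J}\P_I^S$, and the case $J=S$, where $\M_S^S=\H_W(0)$, gives the stated decomposition of the algebra.

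Finally, for the top in (i) I would pass to the one-dimensional characters $\chi_{I'}$ sending $\pib_s\mapsto -1$ for $s\in I'$ and $\pib_s\mapsto 0$ otherwise, and compute $\chi_{I'}(\xi_I)=\chi_{I'}(\pib_{w_0(I)})\,\chi_{I'}(\pi_{w_0(I^c)})$. Because the longest elements $w_0(I)$ and $w_0(I^c)$ have full supports $I$ and $I^c$, the first factor equals $(-1)^{\ell(w_0(I))}$ if $I\subseteq I'$ and $0$ otherwise, while the second equals $1$ if $I'\subseteq I$ and $0$ otherwise; hence $\chi_{I'}(\xi_I)\neq 0$ exactly when $I'=I$. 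Using the standard fact that the simple $\H_W(0)$-modules are precisely these one-dimensional modules $\C_{I'}$ (so that the top of the cyclic module $\P_I^S$ is a direct sum of such), it follows that $\C_I^S$ is the only simple quotient of $\P_I^S$; since the top is cyclic it must be exactly $\C_I^S$, on which $\pib_s$ acts by $-1$ for $s\in I$ and by $0$ otherwise. Having a simple top, $\P_I^S$ is indecomposable, completing both parts.
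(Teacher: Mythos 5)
The paper never proves this theorem: it is stated as a summary of Norton's results, and the only related argument given is the proof of Theorem~\ref{thm:IJS}, which generalizes part (ii) and proceeds via the action formula \eqref{eq:HW0action}, the coset decomposition $\H_W(0)=\bigoplus_{z\in W^J}\pib_z\H_{W_J}(0)$, and a filtration whose subquotients are the $\P_{I\cup K}^S$. Your reconstruction is correct and is, if anything, closer to what the theorem literally asserts: by expanding everything in the $\pib$-basis and comparing lowest terms (for $D(v)\cap K=\emptyset$ one has $\pib_v\pi_{w_0(K)}=\sum_{u\in W_K}\pib_{vu}=\pib_v+\text{strictly longer terms}$, since $\pi_{w_0(K)}=\sum_{u\in W_K}\pib_u$), you obtain the stated bases of $\M_J^S$ and $\P_I^S$ and the \emph{internal} direct sum $\M_J^S=\bigoplus_{I\subseteq J}\P_I^S$ by linear independence plus a dimension count, rather than an isomorphism extracted from a split filtration. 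The identities you rely on ($\pib_s\pi_{w_0(K)}=0$ for $s\in K$; $\pib_w\pib_{w_0(I)}=(-1)^{\ell(u)}\pib_{zw_0(I)}$ for $w=zu$, $z\in W^I$, $u\in W_I$; $\pi_{w_0(I^c)}\pi_{w_0(J^c)}=\pi_{w_0(I^c)}$ when $J^c\subseteq I^c$) all check out.

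Two points in your final paragraph deserve to be made explicit. First, the step from $\chi_{I'}(\xi_I)=0$ (where $\xi_I=\pib_{w_0(I)}\pi_{w_0(I^c)}$) to ``$\C_{I'}$ is not a quotient of $\P_I^S$'' needs the observation that $\P_I^S$ is a direct summand of the regular module --- which you have from (ii) with $J=S$: every homomorphism $\P_I^S\to\C_{I'}$ then extends by zero to $\H_W(0)$, hence is a scalar multiple of $a\mapsto\chi_{I'}(a)$, whose restriction to $\H_W(0)\xi_I$ sends $a\xi_I\mapsto\chi_{I'}(a)\chi_{I'}(\xi_I)$ and so vanishes exactly when $\chi_{I'}(\xi_I)=0$. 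Second, you import as a black box the fact that every simple $\H_W(0)$-module is one of the one-dimensional $\C_{I'}$; in the paper's own arrangement that completeness statement is \emph{deduced from} the present theorem, so as written there is a whiff of circularity. It is easily repaired from your own setup without outside input: filtering $\P_I^S$ by the submodules spanned by the basis vectors $\pib_w\pi_{w_0(I^c)}$ with $\ell(w)\ge\ell(w_0(I))+j$ gives one-dimensional subquotients on which each $\pib_s$ acts by $0$ or $-1$, so every composition factor of the regular module --- hence every simple module --- is some $\C_{I'}$. With these two remarks supplied, the proof is complete.
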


It follows from Theorem~\ref{thm:Norton} that $\{\P_I^S: I\subseteq S\}$ is a complete list of non-isomorphic projective indecomposable $\H_W(0)$-modules and $\{\C_I^S: I\subseteq S\}$ is a complete list of non-isomorphic simple $\H_W(0)$-modules. 
 
We next define a family of $\H_W(0)$-modules which simultaneously generalize the $\H_W(0)$-modules $\P_I^S$ and $\M_J^S$. Let $I$ and $J$ be two subsets of $S$. We define $\IJS:=\H_W(0) \pib_{w_0(I)} \pi_{w_0(J\setminus I)}$. One sees that $\P_I^S = \P_{I,S}^S$ and $\M_J^S = \P_{\emptyset,J^c}^S$. We may assume $I\subseteq J\subseteq S$, without loss of generality, since  $\IJS = \P_{I,I\cup J}^S$. We provide the following result on $\IJS$, which generalizes Theorem~\ref{thm:Norton} (ii) and will be used later.

\begin{theorem}\label{thm:IJS}
(i) If $I, J \subseteq S$ then $\IJS$ has a basis 
\begin{equation}\label{eq:BasisIJS}
\{ \pib_w\pi_{w_0(J\setminus I)}: w\in W,\ I\subseteq D(w) \subseteq J^c \cup I\}
\end{equation}

\noindent(ii) If $I\subseteq J \subseteq S$ then 
\[
\P_I^J \uparrow\,_{\H_{W_J(0)}}^{\H_W(0)} \cong \IJS\cong  \bigoplus_{K\subseteq J^c} \P_{I\cup K}.
\]
\end{theorem}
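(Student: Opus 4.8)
The plan is to realize $\IJS$ as an explicit submodule of $\M:=\H_W(0)\pi_{w_0(J\setminus I)}$, whose structure is already supplied by Norton's theorem, and to compute it from there. Assume $I\subseteq J$ (legitimate, since $\IJS=\P_{I,I\cup J}^S$ and the set $J^c\cup I$ is unchanged by replacing $J$ with $I\cup J$). Because $(J\setminus I)^c=J^c\cup I$, Theorem~\ref{thm:Norton}(ii), applied with $J^c\cup I$ in place of its $J$, identifies $\M$ with $\M_{J^c\cup I}^S$: it has basis $\{\pib_w\pi_{w_0(J\setminus I)}:D(w)\subseteq J^c\cup I\}$ and decomposes as $\bigoplus_{L\subseteq J^c\cup I}\P_L^S$, where the summand $\P_L^S$ is the span of those basis vectors with $D(w)=L$. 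I then set
\[ V:=\bigoplus_{I\subseteq L\subseteq J^c\cup I}\P_L^S, \]
a left $\H_W(0)$-submodule of $\M$ (a sum of direct summands). Its basis is exactly the set \eqref{eq:BasisIJS}, and writing $L=I\cup K$ with $K\subseteq J^c$ (possible as $I\cap J^c=\emptyset$) rewrites $V$ as $\bigoplus_{K\subseteq J^c}\P_{I\cup K}^S$. Hence part~(i) and the second isomorphism of part~(ii) both follow once I prove $\IJS=V$.

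For $\IJS\subseteq V$: the generator $\pib_{w_0(I)}\pi_{w_0(J\setminus I)}$ is, by the basis description above, precisely the basis vector of $\M$ indexed by $w=w_0(I)$, since $D(w_0(I))=I\subseteq J^c\cup I$; it lies in the summand $\P_I^S\subseteq V$. As $V$ is a submodule, the whole left ideal $\IJS$ is contained in $V$. For the reverse inclusion I produce every basis vector of $V$ inside $\IJS$. Fix $L$ with $I\subseteq L\subseteq J^c\cup I$. The standard length-additive factorization of the longest element for the parabolic pair $W_I\subseteq W_L$ writes $w_0(L)=w_0(L/I)\,w_0(I)$, with $w_0(L/I)$ the longest element of $W_L\cap W^I$ and $\ell(w_0(L))=\ell(w_0(L/I))+\ell(w_0(I))$; hence $\pib_{w_0(L/I)}\pib_{w_0(I)}=\pib_{w_0(L)}$, so multiplying the generator on the left by $\pib_{w_0(L/I)}$ gives $\pib_{w_0(L)}\pi_{w_0(J\setminus I)}\in\IJS$. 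For arbitrary $w$ with $D(w)=L$, the descent class is the left weak order interval $[w_0(L),w_1(L)]$ by Theorem~\ref{thm:interval}, so $w=x\,w_0(L)$ with additive lengths and $\pib_w\pi_{w_0(J\setminus I)}=\pib_x\,\pib_{w_0(L)}\pi_{w_0(J\setminus I)}\in\IJS$. Ranging over all such $L$ and $w$ exhausts the basis of $V$, giving $V\subseteq\IJS$ and thus $\IJS=V$.

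It remains to establish $\P_I^J\uparrow_{\H_{W_J}(0)}^{\H_W(0)}\cong\IJS$. Here $\P_I^J=\H_{W_J}(0)\pib_{w_0(I)}\pi_{w_0(J\setminus I)}$, and the multiplication map $a\otimes m\mapsto am$ defines a surjection $\phi:\H_W(0)\otimes_{\H_{W_J}(0)}\P_I^J\twoheadrightarrow\IJS$ of $\H_W(0)$-modules, since $1\otimes(\pib_{w_0(I)}\pi_{w_0(J\setminus I)})$ maps to the generator. To see $\phi$ is an isomorphism I compare dimensions. Since $\H_W(0)$ is free as a right $\H_{W_J}(0)$-module with basis $\{\pib_z:z\in W^J\}$, the source has dimension $|W^J|\cdot\dim\P_I^J=|W^J|\cdot\#\{u\in W_J:D(u)=I\}$, while part~(i) gives $\dim\IJS=\#\{w\in W:I\subseteq D(w)\subseteq J^c\cup I\}$. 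These agree through the unique factorization $w=zu$ with $z\in W^J$, $u\in W_J$: the identity $D(w)\cap J=D(u)$ (which follows from $\ell(ws)=\ell(z)+\ell(us)$ for $s\in J$) shows that $I\subseteq D(w)\subseteq J^c\cup I$ is equivalent to $D(u)=I$ with $z\in W^J$ arbitrary. Thus $\phi$ is a dimension-preserving surjection, hence an isomorphism.

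The main obstacle is the inclusion $V\subseteq\IJS$: a single cyclic generator must be shown to generate all of $V$, and this is exactly where the length-additive factorization $w_0(L)=w_0(L/I)\,w_0(I)$ and the left weak order interval structure of descent classes do the essential work. The descent-compatibility identity $D(zu)\cap J=D(u)$ needed for the dimension count is the remaining small lemma to verify.
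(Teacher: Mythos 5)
Your reverse inclusion (every $\pib_w\pi_{w_0(J\setminus I)}$ with $I\subseteq D(w)\subseteq J^c\cup I$ lies in $\IJS$, via $w=x\,w_0(L)$ and the length-additive factorization $w_0(L)=w_0(L/I)\,w_0(I)$) is correct, and your dimension-count proof of $\P_I^J\uparrow\,_{\H_{W_J}(0)}^{\H_W(0)}\cong\IJS$ is a legitimate variant of the paper's basis-matching argument. But the step on which part (i), the containment $\IJS\subseteq V$, and the decomposition $\IJS\cong\bigoplus_{K\subseteq J^c}\P_{I\cup K}^S$ all rest is false: in Norton's decomposition $\M_{J^c\cup I}^S=\bigoplus_{L\subseteq J^c\cup I}\P_L^S$, the summand $\P_L^S$ is \emph{not} the span of the basis vectors $\pib_w\pi_{w_0(J\setminus I)}$ with $D(w)=L$. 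That span is not even a submodule in general: already for $J=S$ and $I=\emptyset$ one has $\M_S^S=\H_W(0)$ with basis $\{\pib_w\}$, and the span of the descent class of $\emptyset$ is $\FF\cdot\pib_e=\FF\cdot 1$, which is certainly not a left ideal. (The actual summand $\P_L^S=\H_W(0)\pib_{w_0(L)}\pi_{w_0(L^c)}$ is a left ideal, but its Norton basis involves $\pi_{w_0(L^c)}$ and is not a subset of your chosen basis of $\M$.) Consequently your $V$ is either a genuine submodule whose basis is not \eqref{eq:BasisIJS}, or else the span of \eqref{eq:BasisIJS} with no proof that it is $\H_W(0)$-stable; under the second reading the claim that the generator ``lies in the summand $\P_I^S$'' is also unsubstantiated, since its components in the summands $\P_L^S$ with $L\not\supseteq I$ need not vanish.

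What is true, and what the paper establishes by computing $\pib_s\pib_w\pi_{w_0(J\setminus I)}$ directly in \eqref{eq:HW0action}, is that the span of \eqref{eq:BasisIJS} is stable under each $\pib_s$: one needs $D(w)\subseteq D(sw)$ when $\ell(sw)>\ell(w)$, together with the vanishing $\pib_{sw}\pi_{w_0(J\setminus I)}=0$ whenever $sw$ acquires a descent in $J\setminus I$. This computation is what yields part (i). The descent classes then give only a \emph{filtration} of $\IJS$ indexed by the sets $I\cup K$ with $K\subseteq J^c$ (ordered compatibly with inclusion), whose subquotients are isomorphic to $\P_{I\cup K}^S$; the direct sum in (ii) is recovered only because these subquotients are projective, so the filtration splits. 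Your argument skips exactly this action computation and this splitting step, so the gap is essential rather than cosmetic. Once (i) is in place, your proof of the first isomorphism in (ii) does go through.
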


\begin{proof}
Let $I,J\subseteq S$. If $w\in W$ satisfies $I\subseteq D(w)\subseteq J^c\cup I$ then for all $s\in S$,
\begin{equation}\label{eq:HW0action}
\pib_s \pib_w\pi_{w_0(J\setminus I)} = \begin{cases}
-\pib_w\pi_{w_0(J\setminus I)}, & \ell(sw)<\ell(w), \\
0, &  \ell(sw)>\ell(w),\ D(sw) \not\subseteq J^c\cup I,\\
\pib_{sw}\pi_{w_0(J\setminus I)}, & \ell(sw)>\ell(w),\ D(sw)\subseteq J^c\cup I.
\end{cases} 
\end{equation}
This implies that \eqref{eq:BasisIJS} is a basis for $\IJS$ and proves (i).

Assume $I\subseteq J\subseteq S$ below. By the decomposition
\[
\H_W(0) = \bigoplus_{z\in W^J} \pib_z \H_{W_J(0)}
\]
$\P_I^J \uparrow\,_{\H_{W_J(0)}}^{\H_W(0)}$ is isomorphic to the submodule $\M$ of $\H_W(0)$ with a basis
\[
\{ \pib_z \pib_u \pi_{w_0(J\setminus I)}: z\in W^J,\ u\in W_J,\ D(u) = I \}.
\]
On the other hand, each $w\in W$ can be written uniquely as $w=zu$ where $z\in W^J$ and $u\in W_J$. For any $s\in J$ one has $s\in D(u) \Leftrightarrow s\in D(zu)$ since
\[
\ell(zu) = \ell(z) + \ell(u) \quad \text{and} \quad \ell(zus) = \ell(z) + \ell(us).
\]
It follows that $D(u)\subseteq D(w)\subseteq J^c\cup D(u)$. Thus the basis \eqref{eq:BasisIJS} for $\IJS$ is the same as the basis for $\M$, i.e. $\IJS=\M$. This proves the first desired isomorphism in (ii).

Next, we list the subsets of $J^c$ as $K_1,\ldots,K_\ell$ such that $K_i\subseteq K_j$ implies $i\le j$. Let $\M_i$ be the $\FF$-span of the disjoint union
\[
\bigsqcup_{j\ge i} \left\{ \pib_w \pib_{w_0(J\setminus I)}: w\in W,\ D(w) = I\cup K_j \right\}.
\]
Since $D(w)\subseteq D(sw)$ for any $s\in S$ and $w\in W$, it follows from \eqref{eq:HW0action} that there is a filtration
\[
\IJS =\M_1\supseteq \M_2\supseteq\cdots\supseteq \M_\ell\supseteq \M_{\ell+1}=0
\]
of $\H_W(0)$-modules. A basis for $\M_i/\M_{i+1}$ is given by $\{\pib_w \pib_{w_0(J\setminus I)}: w\in W,\ D(w) = I\cup K_i\}$ and one has $\M_i/\M_{i+1} \cong \P_{I\cup K_i}^S$ by \eqref{eq:HW0action}. This implies the second desired isomorphism in (ii).
\end{proof}

\subsection{0-Hecke algebras of type A}\label{sec:H0A}
Now we take the Coxeter system $(W,S)$ to be of type $A_{n-1}$, i.e., let $W=\SS_n$ and $S=\{s_1,\ldots,s_{n-1}\}$ where $s_i=(i,i+1)$. The $0$-Hecke algebra $\H_n(0):=\H_W(0)$ is generated by $\pi_1,\ldots,\pi_{n-1}$ with the quadratic relations $\pi_i^2=\pi_i$, $1\le i\le n-1$, and the same braid relations as $\SS_n$. A generator $\pi_i$ can be realized as the \emph{$i$th bubble-sorting operator} which swaps the adjacent positions $a_i$ and $a_{i+1}$ in a word $a_1\cdots a_n\in\ZZ^n$ if $a_i<a_{i+1}$, or fixes the word otherwise. Another generating set for $\H_n(0)$ consists of $\pib_i:=\pi_i-1$, $1\le i\le n-1$, with the quadratic relations $\pib_i^2=-\pib_i$ and the same braid relations as $\SS_n$. The two sets $\{\pi_w:w\in\SS_n\}$ and $\{\pib_w:w\in\SS_n\}$ are both bases for $\H_n(0)$.

The representation theory of $\H_n(0)$ is a special case of the result of Norton~\cite{Norton} and can be rephrased in a more combinatorial fashion. Based on work of Krob and Thibon~\cite{KrobThibon}, we briefly mentioned in~\cite{H0CF, H0SR} a tableau approach to the representation theory of $\H_n(0)$ but did not give details nor make any substantial use of it. Here we provide a more extensive discussion of this approach and will use it to establish other results in later sections.

We first review some notation. Let $\alpha=(\alpha_1,\ldots,\alpha_\ell) \models n$. The \emph{parabolic subalgebra} $\H_\alpha(0)\cong \H_{\alpha_1}(0)\otimes\cdots\otimes \H_{\alpha_\ell}(0)$ of $\H_n(0)$ is generated by $\{\pib_i:i\in [n-1]\setminus D(\alpha)\}$ and has bases $\{\pi_w:w\in\SS_\alpha\}$ and $\{\pib_w:w\in\SS_\alpha\}$.

A composition $\alpha=(\alpha_1,\ldots,\alpha_\ell)$ of $n$ can be identified with a \emph{ribbon diagram}, i.e., a connected skew diagram without $2\times2$ boxes, such that the rows have lengths $\alpha_1,\ldots,\alpha_\ell$, ordered from bottom to top. The \emph{reverse} of $\alpha$ is $\rev(\alpha):=(\alpha_\ell,\ldots,\alpha_1)$. The \emph{complement} of $\alpha$, denoted by $\alpha^c$, is the unique composition of $n$ whose descent set is $[n-1]\setminus D(\alpha)$. The \emph{transpose} or \emph{conjugate} of $\alpha$ is $\alpha^t := \rev(\alpha^c) = (\rev(\alpha))^c$, or equivalently, the transpose of the ribbon diagram of $\alpha$. An example is below.
\vskip5pt\[ \begin{matrix} 
\young(:::\hfill,:::\hfill,:\hfill\hfill\hfill,\hfill\hfill) & &
\young(::\hfill\hfill,\hfill\hfill\hfill,\hfill,\hfill) & &
\young(:::\hfill,::\hfill\hfill,::\hfill,\hfill\hfill\hfill) \\ 
\alpha=(2,3,1,1) & & \rev(\alpha)=(1,1,3,2) & & \alpha^t=(3,1,2,1)
\end{matrix}\]\vskip5pt
If $\alpha=(\alpha_1,\ldots,\alpha_\ell)$ and $\beta=(\beta_1,\ldots,\beta_k)$ are two ribbons, then one can glue them in two ways and obtain two ribbons
\[ \alpha\cdot\beta := (\alpha_1,\ldots,\alpha_\ell,\beta_1,\ldots,\beta_k) \qand 
\alpha\rhd\beta := (\alpha_1,\ldots,\alpha_{\ell-1},\alpha_\ell+\beta_1,\beta_2,\ldots,\beta_k). \]

By Norton's result, the projective indecomposable $\H_n(0)$-modules and simple $\H_n(0)$-modules are given by $\P_I^S$ and $\C_I^S$ for all $I\subseteq S=\{s_1,\ldots,s_{n-1}\}$. One can index these modules by compositions of $n$ thanks to the bijection $\alpha\mapsto D(\alpha)$. In Section~\ref{sec:HW0} we also studied $\H_n(0)$-modules $\IJS$ for all  $I,J\subseteq S$, generalizing the projective indecomposable $\H_n(0)$-modules. We give a tableau approach to these $\H_n(0)$-modules below.

Let $\alpha=\alpha^1 \oplus \cdots \oplus \alpha^k$ be a \emph{generalized ribbon} with connected components $\alpha^i\models n_i$ for $i=1,\ldots,k$, that is, a skew diagram whose connected components are ribbons $\alpha^1,\ldots,\alpha^k$, such that $\alpha^{i+1}$ is strictly to the northeast of $\alpha^i$ for $i=1,\ldots,k-1$. The \emph{size} of $\alpha$ is $|\alpha|:=n_1+\cdots+n_k$. We recursively define 
\[ [\alpha] := \left \{\beta\cdot\alpha^k,\ \beta\rhd\alpha^k: \beta\in [\alpha^1\oplus\cdots\oplus\alpha^{k-1}] \right\} \]
with $[\alpha^1]:=\{\alpha^1\}$. For example, the generalized ribbon $\alpha=2\oplus22\oplus32$ of size $|\alpha|=11$ is represented by the following diagram and has $[\alpha]=\{22232,4232,2252,452\}$.
\vskip5pt\[ \begin{matrix}
\young(\hfill\hfill) & \oplus & 
& \raisebox{-5pt}{\young(:\hfill\hfill,\hfill\hfill)} & \oplus &
\raisebox{-5pt}{\young(::\hfill\hfill,\hfill\hfill\hfill)} & = & 
\raisebox{-15pt}{\young(:::::::::\hfill\hfill,:::::::\hfill\hfill\hfill,::::\hfill\hfill,:::\hfill\hfill,\hfill\hfill) }
\end{matrix} \]\vskip5pt
One sees that standard tableaux of shape $\alpha$ are in bijection with permutations in the descent classes of $\beta$ in $\SS_n$ for all $\beta\in[\alpha]$. We define $\bP_\alpha$ as the vector space with a basis consisting of standard tableaux of shape $\alpha$. If $i\in[n-1]$ and $\tau$ is a standard tableau of shape $\alpha$ then we define
\begin{equation}\label{eq:RibbonAction}
\pib_i(\tau): = \begin{cases}
-\tau, &  \textrm{if $i$ is in a higher row of $\tau$ than $i+1$},\\
0, & \textrm{if $i$ is in the same row of $\tau$ as $i+1$}, \\
s_i(\tau), & \textrm{if $i$ is in a lower row of $\tau$ than $i+1$}.
\end{cases}
\end{equation}
One can directly check that this gives an $\H_n(0)$-action on $\bP_\alpha$, which is in fact a consequence of the following result.

\begin{theorem}\label{thm:IndP}
(i) If $\alpha\models n$ then $\bP_\alpha \cong \P_I^S$, where $I=\{s_i:i\in D(\alpha)\}$.

\noindent (ii) Let $\alpha=\alpha^1 \oplus \cdots \oplus \alpha^k$ be a generalized ribbon of size $n$ with connected components $\alpha^i\models n_i$ for $i=1,\ldots,k$. Let 
\[ I = \{s_i: i\in D(\alpha^1\rhd\cdots\rhd \alpha^k)\} \qand
J = \{s_j:j\in D(1^{n_1}\rhd\cdots\rhd 1^{n_k}) \}. \]
Then $\bP_\alpha$ is a well-defined $\H_n(0)$-module by \eqref{eq:RibbonAction} and is isomorphic to $\IJS$. Consequently,
\[ \bP_\alpha \cong \left(\bP_{\alpha^1}\otimes\cdots\otimes \bP_{\alpha^k}\right)
\uparrow\,_{\H_{n_1,\ldots,n_k}(0)}^{\H_n(0)}\ \cong
\bigoplus_{\beta\in[\alpha]}\bP_\beta.\]
\end{theorem}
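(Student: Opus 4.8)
The plan is to realize $\bP_\alpha$ inside $\H_n(0)$ by identifying its tableau basis with the basis \eqref{eq:BasisIJS} of $\IJS$, and then to match the combinatorial action \eqref{eq:RibbonAction} with the module action \eqref{eq:HW0action}. Throughout I regard the reading word $w(\tau)$ of a standard tableau $\tau$ as an element of $\SS_n$.

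First I would set up the indexing bijection. For a generalized ribbon $\alpha=\alpha^1\oplus\cdots\oplus\alpha^k$ with the stated $I,J$, I claim $\tau\mapsto w(\tau)$ is a bijection from the standard tableaux of shape $\alpha$ onto $\{w\in\SS_n: I\subseteq D(w)\subseteq J^c\cup I\}$, the index set of the basis \eqref{eq:BasisIJS} of $\IJS$. Two observations drive this: the complement $J^c$ consists precisely of the generators $s_{n_1},s_{n_1+n_2},\ldots$ sitting at the junctions of the $\rhd$-gluings, so that $\H_{W_J}(0)=\H_{n_1,\ldots,n_k}(0)$ and $W_J=\SS_{n_1}\times\cdots\times\SS_{n_k}$; and the reading word of a standard tableau of ribbon shape has right descent set equal to the descent set of that ribbon. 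Combined with the recursive description of $[\alpha]$ (each junction is glued independently by $\cdot$ or $\rhd$), this identifies $\{D(\beta):\beta\in[\alpha]\}$ with $\{I\cup K: K\subseteq J^c\}=\{D(w): I\subseteq D(w)\subseteq J^c\cup I\}$, matching the bijection between standard tableaux of shape $\alpha$ and the union of the descent classes of the $\beta\in[\alpha]$ noted before the theorem. For a single connected ribbon ($k=1$) there are no junctions, $J=S$, and this reduces to a bijection onto the descent class $\{w:D(w)=I\}$ indexing the basis of $\P_I^S$ in Theorem~\ref{thm:Norton}(i).

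The heart of the proof is the dictionary between tableau geometry and Coxeter data. Under $\tau\mapsto w(\tau)$, swapping the entries $i$ and $i+1$ corresponds to the left multiplication $w(\tau)\mapsto s_i w(\tau)$ (which swaps the values $i,i+1$ in one-line notation), so I would verify the three mutually exclusive cases of \eqref{eq:RibbonAction} against \eqref{eq:HW0action}: (a) $i$ lies in a strictly higher row than $i+1$ exactly when $i$ appears after $i+1$ in the reading order, i.e. $\ell(s_i w(\tau))<\ell(w(\tau))$, giving $-\tau$ as in the first case of \eqref{eq:HW0action}; (b) $i$ and $i+1$ lie in the same row (hence $i$ immediately left of $i+1$) exactly when $\ell(s_i w(\tau))>\ell(w(\tau))$ but $s_i(\tau)$ fails to be standard of shape $\alpha$, so $D(s_i w(\tau))\not\subseteq J^c\cup I$ and the action is $0$; (c) $i$ lies in a strictly lower row than $i+1$ exactly when $\ell(s_i w(\tau))>\ell(w(\tau))$ and $s_i(\tau)$ is again standard of shape $\alpha$ with $w(s_i(\tau))=s_i w(\tau)$, giving $s_i(\tau)$. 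This is the step I expect to be the main obstacle, since it requires careful bookkeeping of the tableau conventions (which row counts as higher, how the reading word records the positions of values, and why a same-row swap forces a new descent strictly inside a row—outside $J^c\cup I$—while a lower-row swap keeps all descents either within a block or at a junction, hence inside $J^c\cup I$). Once (a)--(c) are checked, the linear map $\tau\mapsto\pib_{w(\tau)}\pi_{w_0(J\setminus I)}$ is an isomorphism $\bP_\alpha\cong\IJS$ of $\H_n(0)$-modules; in particular the action \eqref{eq:RibbonAction} is then automatically well defined, and specializing to $k=1$ yields part (i), $\bP_\alpha\cong\P_I^S$.

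Finally I would deduce the remaining isomorphisms of part (ii) directly from Theorem~\ref{thm:IJS}(ii) together with part (i). Since $W_J=\SS_{n_1}\times\cdots\times\SS_{n_k}$ and $I$ restricts on the $m$-th block to $D(\alpha^m)$, the projective indecomposable $\P_I^J$ of $\H_{W_J}(0)=\H_{n_1}(0)\otimes\cdots\otimes\H_{n_k}(0)$ factors as $\bP_{\alpha^1}\otimes\cdots\otimes\bP_{\alpha^k}$ by part (i), so $\IJS\cong\P_I^J\uparrow\,_{\H_{n_1,\ldots,n_k}(0)}^{\H_n(0)}\cong\bigl(\bP_{\alpha^1}\otimes\cdots\otimes\bP_{\alpha^k}\bigr)\uparrow\,_{\H_{n_1,\ldots,n_k}(0)}^{\H_n(0)}$. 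Likewise the decomposition $\IJS\cong\bigoplus_{K\subseteq J^c}\P_{I\cup K}$ of Theorem~\ref{thm:IJS}(ii) becomes $\bigoplus_{\beta\in[\alpha]}\bP_\beta$ once we apply part (i) and the bijection $K\mapsto\beta$ (with $D(\beta)=I\cup K$) between subsets of $J^c$ and $[\alpha]$ recorded in the first step. This completes the chain of isomorphisms asserted in the theorem.
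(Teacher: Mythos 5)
Your proposal is correct and follows essentially the same route as the paper: the paper's proof also sends $\tau\mapsto\pib_{w(\tau)}\pi_{w_0(J\setminus I)}$ to identify the tableau basis of $\bP_\alpha$ with the basis \eqref{eq:BasisIJS} of $\IJS$, matches \eqref{eq:RibbonAction} with \eqref{eq:HW0action}, and then invokes Theorem~\ref{thm:IJS}(ii). You merely spell out the case-by-case dictionary between row positions and left descents that the paper leaves implicit.
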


\begin{proof}
Since (i) is a special case of (ii), it suffices to establish the latter. Sending a standard tableau $\tau$ of shape $\alpha$ to $\pib_{w(\tau)}\pi_{w_0(J\setminus I)}$ gives a vector space isomorphism $\bP_\alpha\cong\IJS$ by  \eqref{eq:BasisIJS}. This isomorphism transforms the $\H_n(0)$-action on $\IJS$ by \eqref{eq:HW0action} to an $\H_n(0)$-action on $\bP_\alpha$ by \eqref{eq:RibbonAction}. Hence $\bP_\alpha$ is a well defined $\H_n(0)$-module isomorphic to $\IJS$. Applying Theorem~\ref{thm:IJS} to $\bP_\alpha\cong\IJS$ completes the proof.
\end{proof}

In particular, a projective indecomposable $\H_n(0)$-module $\bP_\alpha$ has a basis consisting of standard tableaux of ribbon shape $\alpha$, which is in bijection with the descent class of $\alpha$ by taking reading word. Some examples are given below.
\[ \quad
\xymatrix @R=16pt @C=28pt { \bP_{211} \\
 {\young(:2,:3,14)}  \ar@(ul,dl)[]_{\pib_2=\pib_3=-1} \ar[d]^-{\pib_1}\\
 {\young(:1,:3,24)} \ar@(ul,dl)[]_{\pib_1=\pib_3=-1} \ar[d]^-{\pib_2} \\
{\young(:1,:2,34)} \ar@(ul,dl)[]_{\txt{$\substack{\pib_1=\pib_2=-1 \\ \pib_3=0}$}}  
}\qquad
\xymatrix @R=16pt @C=28pt {
\bP_{121} & {\young(:3,14,2)}  \ar@(ur,dr)[]^{\pib_1=\pib_3=-1} \ar[d]_-{\pib_2}\\
& {\young(:2,14,3)} \ar@(ur,dr)[]^{\pib_2=-1} \ar[ld]^-{\pib_1} \ar[rd]_-{\pib_3} \\
{\young(:1,24,3)} \ar@(ul,dl)[]_{\pib_1=\pib_2=-1} \ar[rd]^-{\pib_3} & &
{\young(:2,13,4)} \ar@(ur,dr)[]^{\pib_2=\pib_3=-1} \ar[ld]_-{\pib_1} \\
& {\young(:1,23,4)} \ar@(ur,dr)[]^{\txt{$\substack{\pib_1=\pib_3=-1\\ \pib_2=0}$}} 
}
\] \vskip5pt

Let $\tau_0(\alpha)$ and $\tau_1(\alpha)$ be the standard tableaux of ribbon shape $\alpha\models n$ whose reading words are $w_0(\alpha)$ and $w_1(\alpha)$, respectively. Then $\tau_0(\alpha)$ is obtained by filling with $1,2,\ldots,n$ the columns of the ribbon $\alpha$ from top to bottom, starting with the leftmost column and proceeding toward the rightmost column. Similarly, $\tau_1(\alpha)$ is obtained by filling with $1,2,\ldots,n$ the rows of the ribbon $\alpha$ from left to right, starting with the top row and proceeding toward the bottom row. One can also check that $\bC_\alpha :=\FF \cdot \tau_1(\rev(\alpha))$ is isomorphic to the top of $\bP_\alpha$, and the reading word of $\tau_1(\rev(\alpha))$ is $w_1(\rev(\alpha))=w_1(\alpha)^{-1}$. The one-dimensional module $\bP_n=\bC_n$ [\,$\bP_{1^n}=\bC_{1^n}$ resp.] admits an $\H_n(0)$-action by $\pib_i=0$ [$\pib_i=1$ rep.] for all $i\in[n-1]$, giving an analogue of the trivial [sign resp.] representation of $\SS_n$. 

\begin{remark}\label{rem:symmetryA}
One sees that $\tau_0(\alpha^t)=\tau_1(\alpha)$ for any composition $\alpha$. In fact, one obtains $\bP_{\alpha^t}$ by taking transpose of standard tableaux of shape $\alpha$, reversing the arrows connecting these tableaux, and modifying loops accordingly. This agrees with the anti-isomorphism $w\mapsto ww_0$ between the intervals $[w_0(\alpha),w_1(\alpha)]$ and $[w_0(\alpha^t),w_1(\alpha^t)]$ in the left weak order of $\SS_n$, giving a representation theoretic interpretation for the antipode of $\NSym$---see Section~\ref{sec:antipodes}. 
\end{remark}

Recall that simple $\CC\SS_n$-modules can be constructed using tabloids. By definition each tabloid can be uniquely represented by a Young tableau with increasing rows. Thus for any composition $\alpha=(\alpha_1,\ldots,\alpha_\ell)\models n$, the standard tableaux of generalized ribbon shape $\alpha_1\oplus\cdots\oplus\alpha_\ell$ are analogous to tabloids, and they form a basis for the $\H_n(0)$-module $\bM_\alpha:=\bP_{\alpha_1\oplus\cdots\oplus\alpha_\ell}$. Moreover, the $\H_n(0)$-module $\bP_\alpha$ is naturally isomorphic to a submodule of $\bM_\alpha$, since one can obtain a standard tableau of shape $\alpha_1\oplus\cdots\oplus\alpha_\ell$ from a standard tableau of ribbon shape $\alpha$ by separating its rows. This gives an analogue for the permutation module $M^\lambda$ and Specht module $S^\lambda$ of the symmetric group $\SS_n$ indexed by a partition $\lambda\vdash n$. See also Example~\ref{ex:M211}. If $\alpha=1^n$ then $\bM_\alpha$ carries the regular representation of $\H_n(0)$. More generally, the following result provides an analogue of Young's rule, which follows immediately from Theorem~\ref{thm:IndP}.

\begin{corollary}
If $\alpha=(\alpha_1,\ldots,\alpha_\ell)\models n$ then 
\[ \bM_\alpha\cong \left( \bP_{\alpha_1} \otimes\cdots \otimes \bP_{\alpha_\ell} \right) \uparrow\,_{\H_\alpha(0)}^{\H_n(0)} \cong \bigoplus_{\beta\cleq\alpha} \bP_\beta.\]
\end{corollary}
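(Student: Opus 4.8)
The plan is to deduce this corollary as a direct specialization of Theorem~\ref{thm:IndP}(ii). The key observation is that a composition $\alpha=(\alpha_1,\ldots,\alpha_\ell)\models n$, when its parts are read as \emph{disconnected} ribbons stacked in the generalized-ribbon sense, is precisely the generalized ribbon $\alpha^1\oplus\cdots\oplus\alpha^\ell$ with $\alpha^i = (\alpha_i)$ a single-row ribbon of size $n_i=\alpha_i$. By definition $\bM_\alpha = \bP_{\alpha_1\oplus\cdots\oplus\alpha_\ell}$, so Theorem~\ref{thm:IndP}(ii) applies verbatim to this generalized ribbon.

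First I would verify the induction formula. With $\alpha^i=(\alpha_i)$ the one-row ribbon, Theorem~\ref{thm:IndP}(i) gives $\bP_{\alpha^i}\cong\bP_{\alpha_i}$, the one-dimensional ``trivial'' module for $\H_{\alpha_i}(0)$, and the parabolic subalgebra $\H_{n_1,\ldots,n_\ell}(0)$ is exactly $\H_\alpha(0)\cong\H_{\alpha_1}(0)\otimes\cdots\otimes\H_{\alpha_\ell}(0)$. Hence the first isomorphism in Theorem~\ref{thm:IndP}(ii) reads
\[
\bM_\alpha\cong\bigl(\bP_{\alpha_1}\otimes\cdots\otimes\bP_{\alpha_\ell}\bigr)\uparrow\,_{\H_\alpha(0)}^{\H_n(0)},
\]
which is the first isomorphism claimed.

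Next I would pin down the direct-sum decomposition by computing $[\alpha]$ for this generalized ribbon and identifying it with $\{\beta:\beta\cleq\alpha\}$. The recursion $[\alpha]=\{\beta\cdot\alpha^k,\ \beta\rhd\alpha^k:\beta\in[\alpha^1\oplus\cdots\oplus\alpha^{k-1}]\}$ means that at each stage the next one-row block $(\alpha_k)$ is glued to the current ribbon either by $\cdot$ (keeping the descent that separates the blocks) or by $\rhd$ (merging, i.e.\ omitting that descent). Thus the compositions in $[\alpha]$ are exactly those obtained by choosing, for each of the $\ell-1$ boundaries between consecutive blocks, whether to keep or delete the corresponding descent; these are precisely the coarsenings of $\alpha$, i.e.\ the compositions $\beta\models n$ with $D(\beta)\subseteq D(\alpha)$, which is the relation $\beta\cleq\alpha$. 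Substituting $[\alpha]=\{\beta:\beta\cleq\alpha\}$ into the second isomorphism of Theorem~\ref{thm:IndP}(ii) yields $\bM_\alpha\cong\bigoplus_{\beta\cleq\alpha}\bP_\beta$, completing the proof.

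I do not expect any genuine obstacle here, since both isomorphisms are instances of an already-proved theorem; the only point requiring care is the combinatorial identification $[\alpha]=\{\beta:\beta\cleq\alpha\}$ when all connected components are single rows. The mild subtlety is that $\cdot$ and $\rhd$ act on the descent set of the \emph{accumulated} ribbon rather than on $\alpha$ directly, so one should check by induction on $\ell$ that the two gluing choices at the $k$th boundary correspond bijectively to including or excluding the descent $\alpha_1+\cdots+\alpha_{k}$, and that distinct sequences of choices give distinct coarsenings. Once this bijection is established the corollary follows immediately.
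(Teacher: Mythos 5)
Your proposal is correct and matches the paper's argument exactly: the paper also derives this corollary as an immediate specialization of Theorem~\ref{thm:IndP}(ii) to the generalized ribbon $\alpha_1\oplus\cdots\oplus\alpha_\ell$, with $\bM_\alpha$ defined as $\bP_{\alpha_1\oplus\cdots\oplus\alpha_\ell}$. Your careful identification of $[\alpha]$ with the set of coarsenings $\{\beta:\beta\cleq\alpha\}$ is the one detail the paper leaves implicit, and you handle it correctly.
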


\subsection{0-Hecke algebras of type B}\label{sec:H0B}

A \emph{signed permutation} of $[n]$ is a permutation of the set $\{\pm1,\ldots,\pm n\}$ such that $w(-i)=-w(i)$ for all $i\in[\pm n]$. The \emph{hyperoctahedral group} $\B_n$ consists of all signed permutations of $[n]$ with group operation being the composition of maps. Since a signed permutation $w$ in $\B_n$ is determined by where it sends $1,\ldots,n$, we write $w$ as a word $w(1)\cdots w(n)$, where a negative integer $-k<0$ is often written as $\bar k$. We assume $w(0)=0$ for all $w\in\B_n$.

The hyperoctahedral group $\B_n$ is generated by $S=\{s_0,s_1,\ldots,s_{n-1}\}$, where $s_0 := \bar12\cdots n$ and $s_i := (i,i+1)(-i,-(i+1))$ for $i=1,\ldots,n-1$. The pair $(\B_n,S)$ is the Coxeter system of type $B_n$ whose Coxeter diagram is given below.
\[
\xymatrix @C=12pt{
s_0 \ar@{=}[r] & s_1 \ar@{-}[r] & s_2 \ar@{-}[r] & \cdots \ar@{-}[r] & s_{n-2} \ar@{-}[r] & s_{n-1}
}
\]
The symmetric group $\SS_n$ is naturally identified with the parabolic subgroup of $\B_n$ generated by $s_1,\ldots,s_{n-1}$. 

The descent set of $w\in \B_n$ is $D(w)=\{i:0\leq i\leq n-1, w(i)>w(i+1)\}$ where we identify $i$ with $s_i$. 
The length of $w\in \B_n$ is $\ell(w)=\inv(w)+\neg(w)+\nsp(w)$ where
\begin{eqnarray*}
\inv(w) &:=& \#\{(i,j):1\leq i<j\leq n,\ w(i)>w(j)\}, \\
\neg(w) &:=& \#\{1\leq i\leq n: w(i)<0\}, \qand \\
\nsp(w) &:=& \#\{(i,j):1\leq i<j\leq n,\ w(i)+w(j)<0\}.
\end{eqnarray*}

The (complex) representation theory of hyperoctahedral groups is related to the representation theory of symmetric groups and can be found in Geissinger and Kinch~\cite{GeissingerKinch}. The representation theory of 0-Hecke algebras of type B is a special case of the result of Norton~\cite{Norton}, and here we give a combinatorial approach to it using tableaux.

The 0-Hecke algebra $\HB_n(0)$ of the hyperoctahedral group $\B_n$ has two generating sets $\{\pi_i:0\le i\le n-1\}$ and $\{\pib_i:0\le i\le n-1\}$, where $\pib_i := \pi_i-1$. Both generating sets satisfy the same braid relation as the hyperoctahedral group $\B_n$, but different quadratic relations $\pi_i^2=\pi_i$ and $\pib_i^2=-\pib_i$. One can realize $\pi_0,\pi_1,\ldots,\pi_{n-1}$ as the \emph{signed bubble-sorting operators} on $\ZZ^n$: if $0\le i\le n-1$ and $(a_1,\ldots,a_n)\in \ZZ^n$ then
\[ \pi_i(a_1,\ldots,a_n):=
\begin{cases}
(-a_1,a_2,\ldots, a_n), & i=0,\ a_1>0,\\
(a_1,\ldots,a_{i+1},a_i,\ldots,a_n), & 1\leq i\leq n-1,\ a_i<a_{i+1},\\
(a_1,\ldots,a_n), & {\rm otherwise}.
\end{cases} \]

We need some notation before presenting our tableau approach to the representation theory of $\HB_n(0)$. A \emph{pseudo-composition} is a sequence $\alpha=(\alpha_1,\ldots,\alpha_\ell)$ of integers such that $\alpha_1\geq0$ and $\alpha_2,\ldots,\alpha_\ell>0$. The \emph{length} of $\alpha$ is $\ell(\alpha):=\ell$ and the \emph{size} of $\alpha$ is $|\alpha|:=\alpha_1+\cdots+\alpha_\ell$. We call $\alpha$ a pseudo-composition of $n$ and write $\alpha\modelsB n$ if its size is $n$. The \emph{descent set} of $\alpha$ is $D(\alpha):=\{\alpha_1,\alpha_1+\alpha_2,\ldots,\alpha_1+\cdots+\alpha_{\ell-1}\}$.
The map $\alpha\mapsto D(\alpha)$ is a bijection between pseudo-compositions of $n$ and the subsets of $\{0,1,\ldots,n-1\}$.
Let $\alpha^c$ be the pseudo-composition of $n$ whose descent set is $\{0,1,\ldots,n-1\} \setminus D(\alpha)$. 
We write $\alpha\cleq\beta$ if $\alpha$ and $\beta$ are pseudo-compositions of the same size such that $D(\alpha)\subseteq D(\beta)$.
A pseudo-composition $\alpha=(\alpha_1,\ldots,\alpha_\ell)$ and a composition $\beta=(\beta_1,\ldots,\beta_k)$ give rise to two pseudo-compositions
\[ \alpha\cdot\beta := (\alpha_1,\ldots,\alpha_\ell,\beta_1,\ldots,\beta_k) \qand 
\alpha\rhd\beta := (\alpha_1,\ldots,\alpha_{\ell-1},\alpha_\ell+\beta_1,\beta_2,\ldots,\beta_k). \]

Let $\alpha=(\alpha_1,\ldots,\alpha_\ell)\modelsB n$. The \emph{parabolic subgroup} $\B_\alpha$ of $\B_n$ is generated by $\{s_i:i\in D(\alpha^c)\}$ and isomorphic to $\B_{\alpha_1}\times\SS_{\alpha_2}\times \cdots\times\SS_{\alpha_\ell}$.  The \emph{parabolic subalgebra} $\HB_\alpha(0)$ of $\HB_n(0)$ is generated by $\{\pi_i: i\in D(\alpha^c)\}$ and isomorphic to $\HB_{\alpha_1}(0) \otimes\H_{\alpha_2}(0)\otimes\cdots\otimes \H_{\alpha_\ell}(0)$. The \emph{descent class of $\alpha$ in $\B_n$} is the set $\{w\in\B_n: D(w) = D(\alpha) \}$, which is an interval under the left weak order of $\B_n$, denoted by $[w^B_0(\alpha),w^B_1(\alpha)]$.

Specializing Norton's results to type B one sees that projective indecomposable $\HB_n(0)$-modules and simple $\HB_n(0)$-modules are given by $\P_I^S$ and $\C_I^S$ for all $I\subseteq S = \{s_0,s_1,\ldots,s_{n-1}\}$. These modules can be indexed by pseudo-compositions of $n$ whose descent sets correspond to subsets of $S$. As discussed in Section~\ref{sec:HW0}, there are $\HB_n(0)$-modules $\IJS$ for all pairs of subsets $I,J\in S$, which generalize the projective indecomposable $\HB_n(0)$-modules $\P_I^S$. We will describe all these $\HB_n(0)$-modules using tableaux.

First note that pseudo-compositions are in bijection with \emph{pseudo-ribbon diagrams} in the following way. If a pseudo-composition does not begin with a $0$ then it can be viewed as a composition which corresponds to a ribbon diagram, and we add a new 0-box to the left of the bottom row of this ribbon diagram. If a pseudo-composition begins with a $0$ followed by a composition $\alpha$, then we draw the ribbon diagram corresponding to $\alpha$ and add a new $0$-box below the leftmost column of this ribbon diagram. For example, the pseudo-compositions $(2,3,1,1)$ and $(0,2,3,1,1)$ correspond to the following pseudo-ribbon diagrams.
\[ {(2,3,1,1) \quad \leftrightarrow \quad} \raisebox{-12pt}{\young(::::\hfill,::::\hfill,::\hfill\hfill\hfill,0\hfill\hfill)}
\qquad\qquad\qquad 
(0,2,3,1,1) \quad \leftrightarrow \quad \raisebox{-15pt}{\young(:::\hfill,:::\hfill,:\hfill\hfill\hfill,\hfill\hfill,0)} \] 
If $\alpha$ is a pseudo-composition then the pseudo-ribbon diagrams of $\alpha$ and $\alpha^c$ are symmetric about the $45^\circ$-diagonal. 

A \emph{generalized pseudo-ribbon} $\alpha=\alpha^1\oplus\cdots\oplus\alpha^k$ is a skew diagram with connected components $\alpha^1,\ldots,\alpha^k$ such that $\alpha^1$ is a pseudo ribbon, $\alpha^2,\ldots,\alpha^k$ are ribbons, and $\alpha^{i+1}$ is strictly to the northeast of $\alpha^i$ for $i=1,\ldots,k-1$. The size of $\alpha$ is $|\alpha|:=|\alpha^1|+\cdots+|\alpha^k|$. Define $[\alpha^1]:=\{\alpha^1\}$ and 
\[ [\alpha] := \left\{ \beta \cdot \alpha_k,\ \beta\rhd \alpha_k : \beta\in [\alpha^1\oplus\cdots\oplus\alpha^{k-1}] \right\}. \]

Let $\alpha$ be a generalized pseudo-ribbon. A \emph{(type B) standard tableau $\tau$ of shape} $\alpha$ is a filling of the empty boxes in $\alpha$ with integers $\pm1,\ldots,\pm n$, such that each row is increasing from left to right and each column is increasing from top to bottom, \emph{with the extra 0-box included}, and that the \emph{reading word} $w(\tau)$ of $\tau$ is a signed permutation in $\B_n$. Here the reading word $w(\tau)$ is obtained by reading the integers in $\tau$, \emph{with the extra 0 excluded}, from the bottom row to the top row and proceeding from left to right within each row. Below are two standard tableaux of ribbon shape $2311$ and $02311$ with reading word $23\bar4\bar16\bar5\bar7$ and $\bar65\bar4172\bar3$, respectively.

\[ \ytableaushort{ \none\none\none\none{\bar7}, \none\none\none\none{\bar5}, \none\none {\bar4}{\bar1}6,{\color{red}0}23 } \qquad\qquad \qquad\qquad
\ytableaushort{ \none\none\none{\bar3}, \none\none\none{2}, \none {\bar4}17,{\bar6}5,{\color{red}0} } \] 

One can negate every entry of a standard tableau $\tau$ of a pseudo-ribbon shape, rotate it by 180 degrees, and glue it back to $\tau$ by identifying the two extra 0-boxes. The resulting tableau has reading word given by the full description for the signed permutation $w(\tau)$. See an example below.

\[ \begin{matrix}
\raisebox{-10pt}{\ytableaushort{\none\none 3, \none { \bar4}{\bar1}, {\color{red}0}2 }} & \qquad\qquad  \qquad
\ytableaushort{\none\none\none\none 3, \none\none\none { \bar4}{\bar1}, \none{\bar2} {{\color{red}0}} 2, 14,{\bar3} } \\ \\ 2\bar4\bar13 & \qquad\qquad
\begin{pmatrix} 
\bar4 & \bar 3 & \bar 2 & \bar1 & {{\color{red}0}} & 1 & 2 &3 & 4 \\
\bar3 & 1 & 4 & \bar2 & {{\color{red}0}} &2 &\bar4 & \bar1 &3
\end{pmatrix}
\end{matrix} \]\vskip5pt

Let $\alpha$ be a generalized ribbon of size $n$. One sees that taking reading word gives a bijection between standard tableaux of shape $\alpha$ and the union of descent classes of $\beta$ in $\B_n$ for all $\beta \in [\alpha]$. We define $\bPB_\alpha$ to be the vector space with a basis consisting of all standard tableaux of shape $\alpha$. Let $\tau$ be a tableau in this basis. Define $s_0(\tau)$ to be the tableau obtained from $\tau$ by negating the entry $\pm1$, and call $0$ a \emph{descent} of $\tau$ if $-1$ appears in $\tau$. For every $i\in[n-1]$ define $s_i(\tau)$ to be the tableau obtained from $\tau$ by swapping the absolute values of $\pm i$ and $\pm(i+1)$ but leaving their signs invariant, and call $i$ a \emph{descent} of $\tau$ if one of the following conditions holds:
\begin{itemize}
\item
both $i$ and $i+1$ appear in $\tau$, with $i$ in a row higher than $i+1$,
\item
both $-i$ and $-(i+1)$ appear in $\tau$, with $-i$ in a row lower than $-(i+1)$,
\item
both $i$ and $-(i+1)$ appear in $\tau$.
\end{itemize}
One sees that the descents of $\tau$ are precisely the descents of $w(\tau)^{-1}$. For each $i\in\{0,1,\ldots,n-1\}$ we define
\begin{equation}\label{eq:ActionB}
\pib_i(\tau)=
\begin{cases}
-\tau, & \textrm{ if $i$ is a descent of $\tau$},\\
0, & \textrm{ if $i$ is not a descent of $\tau$ and $s_i(\tau)$ is not standard},\\
s_i(\tau), & \textrm{ if $i$ is not a descent of $\tau$ and $s_i(\tau)$ is standard}.
\end{cases}
\end{equation}

\begin{theorem}\label{thm:IndPB}
(i) If $\alpha\modelsB n$ then $\bPB_\alpha \cong \P_I^S$ where $I=\{s_i:i\in D(\alpha)\}$.

\noindent(ii) Let $\alpha=\alpha^1 \oplus \cdots \oplus \alpha^k$ be a generalized pseudo-ribbon of size $n$ with connected components $\alpha^1\modelsB n_1$ and $\alpha^i\models n_i$ for $i=2,\ldots,k$. Let 
\[ I = \{s_i: i\in D(\alpha^1\rhd\cdots\rhd \alpha^k)\} \qand
J = \{s_j:j\in D(0\cdot1^{n_1}\rhd\cdots\rhd 1^{n_k}) \}.\]
 Then $\bPB_\alpha$ is a well defined $\HB_n(0)$-module isomorphic to $\IJS$. Consequently,
\[ \bPB_\alpha \cong \left(\bPB_{\alpha^1}\otimes\bP_{\alpha_2}\otimes\cdots\otimes \bP_{\alpha^k}\right)
\uparrow\,_{\HB_{n_1,\ldots,n_k}(0)}^{\HB_n(0)}\ \cong
\bigoplus_{\beta\in[\alpha]}\bPB_\beta. \]
\end{theorem}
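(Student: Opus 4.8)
The plan is to follow the proof of Theorem~\ref{thm:IndP} in type A, reducing everything to the structural result Theorem~\ref{thm:IJS} on the modules $\IJS$. First I would observe that (i) is the special case $k=1$ of (ii): when $\alpha$ is a single pseudo-composition there are no gluing choices, so $[\alpha]=\{\alpha\}$, the index $J$ equals all of $S$, and $\IJS=\P_{I,S}^S=\P_I^S$. Hence it suffices to prove (ii).

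The core of the argument is to set up the linear map $\bPB_\alpha\to\IJS$ sending a standard tableau $\tau$ of shape $\alpha$ to $\pib_{w(\tau)}\pi_{w_0(J\setminus I)}$ and to show it is an isomorphism of $\HB_n(0)$-modules. I would first check that it is a bijection between the two natural bases. By \eqref{eq:BasisIJS}, the module $\IJS$ has a basis indexed by the signed permutations $w$ with $I\subseteq D(w)\subseteq J^c\cup I$, and this set is the disjoint union of the descent classes $\{w:D(w)=I\cup K\}$ over $K\subseteq J^c$. On the tableau side, taking reading words identifies the standard tableaux of shape $\alpha$ with the union of the descent classes of $\beta$ in $\B_n$ over $\beta\in[\alpha]$. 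Thus the bijection reduces to the combinatorial identity that $\beta\mapsto D(\beta)$ carries $[\alpha]$ bijectively onto $\{I\cup K:K\subseteq J^c\}$, which holds because each of the $k-1$ independent choices of $\cdot$ versus $\rhd$ at a gluing point toggles exactly one boundary descent, with $\alpha^1\rhd\cdots\rhd\alpha^k$ realizing $I$ and the fully separated shape realizing $I\cup J^c$.

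Having matched the bases, I would verify that the map intertwines the two actions, so that the tableau action \eqref{eq:ActionB} corresponds term by term to the algebra action \eqref{eq:HW0action}. The relevant facts are: the descents of $\tau$ coincide with the descents of $w(\tau)^{-1}$ (recorded just before the theorem), so that ``$i$ is a descent of $\tau$'' is precisely the condition $\ell(s_iw(\tau))<\ell(w(\tau))$ of the first case of \eqref{eq:HW0action}; the identity $w(s_i(\tau))=s_iw(\tau)$ holds whenever $s_i(\tau)$ is defined, matching the passage from $w$ to $s_iw$; and, when $i$ is not a descent, ``$s_i(\tau)$ is standard'' is equivalent to $D(s_iw(\tau))\subseteq J^c\cup I$, which separates the last two cases (zero versus $\pib_{s_iw}\pi_{w_0(J\setminus I)}$) of \eqref{eq:HW0action}.

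I expect the main obstacle to be the type B bookkeeping around the generator $s_0$ and the extra $0$-box. One must confirm that negating the entry $\pm1$ faithfully encodes left multiplication by $\pib_0$: in particular that ``$0$ is a descent of $\tau$'', i.e.\ that $-1$ appears in $\tau$, is equivalent to $0\in D(w(\tau)^{-1})$, equivalently to $w(\tau)^{-1}(1)<0$, and that the three bulleted descent conditions for $i\in[n-1]$ correctly reproduce the left-descent behaviour of signed permutations, including that the ``$s_i(\tau)$ not standard'' case genuinely yields $0$. Once the module isomorphism $\bPB_\alpha\cong\IJS$ is established, the two displayed isomorphisms in (ii)---the induction product and the decomposition $\bigoplus_{\beta\in[\alpha]}\bPB_\beta$---follow at once by transporting Theorem~\ref{thm:IJS}(ii) through this isomorphism, exactly as in type A.
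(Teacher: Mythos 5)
Your proposal is correct and follows essentially the same route as the paper, which proves Theorem~\ref{thm:IndPB} by the remark ``this can be proved similarly to Theorem~\ref{thm:IndP}'', i.e.\ via the very map $\tau\mapsto\pib_{w(\tau)}\pi_{w_0(J\setminus I)}$ you describe, matching the basis \eqref{eq:BasisIJS} with standard tableaux, transporting the action \eqref{eq:HW0action} to \eqref{eq:ActionB}, and then invoking Theorem~\ref{thm:IJS}. Your additional bookkeeping around $s_0$, the $0$-box, and the identification of $D([\alpha])$ with $\{I\cup K:K\subseteq J^c\}$ correctly fills in the details the paper leaves implicit.
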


\begin{proof}
This can be proved similarly to Theorem~\ref{thm:IndP}.
\end{proof}

In particular, a projective indecomposable $\HB_n(0)$-module $\bPB_\alpha$ has a basis consisting of standard tableaux of pseudo-ribbon shape $\alpha$, which is in bijection with the descent class of $\alpha$ in $\B_n$. See the example below.

\[ 
\xymatrix @R=8pt @C=16pt { 
 \bPB_{021} & &
{\begin{ytableau}
   \none & 2 \\
   \bar1 & 3 \\
    {\color{red}0}
  \end{ytableau}}
 \ar@(r,dr)[]^{\pib_0=\pib_2=-1} \ar[d]_-{\pib_1}  \\
& & 
{\begin{ytableau}
   \none & 1 \\
   \bar2 & 3 \\
    {\color{red}0}
  \end{ytableau}}
\ar@(ur,r)[]^{\pib_1=-1} \ar[ld]^-{\pib_0} \ar[rd]_-{\pib_2} \\
& 
{\begin{ytableau}
   \none & \bar1 \\
   \bar2 & 3 \\
    {\color{red}0}
  \end{ytableau}}
\ar@(ul,l)[]_{\pib_0=-1} \ar[ld]^-{\pib_1} \ar[rd]^-{\pib_2} & &
{\begin{ytableau}
   \none & 1 \\
   \bar3 & 2 \\
    {\color{red}0}
  \end{ytableau}}
\ar@(ur,r)[]^{\pib_1=\pib_2=-1} \ar[ld]_-{\pib_0} \\
{\begin{ytableau}
   \none & \bar2 \\
   \bar1 & 3 \\
    {\color{red}0}
  \end{ytableau}}
\ar@(l,dl)[]_{\pib_0=\pib_1=-1} \ar[d]^-{\pib_2} & &
{\begin{ytableau}
   \none & \bar1 \\
   \bar3 & 2\\
    {\color{red}0}
  \end{ytableau}}
\ar@(r,dr)[]^{\pib_0=\pib_2=-1} \ar[d]_-{\pib_1} \\
{\begin{ytableau}
   \none & \bar3 \\
   \bar1 & 2 \\
    {\color{red}0}
  \end{ytableau}}
\ar@(l,dl)[]_{\pib_0=\pib_2=-1} \ar[rd]^-{\pib_1}
& & 
{\begin{ytableau}
   \none & \bar2 \\
   \bar3 & 1 \\
    {\color{red}0}
  \end{ytableau}} 
\ar@(ur,r)[]^{\pib_1=-1} \ar[ld]_-{\pib_2} \ar[rd]_-{\pib_0}
\\
& 
{\begin{ytableau}
   \none & \bar3 \\
   \bar2 & 1 \\
    {\color{red}0}
  \end{ytableau}}
\ar@(l,dl)[]_{\pib_1=\pib_2=-1} \ar[rd]_-{\pib_0}
& & 
{\begin{ytableau}
   \none & \bar2 \\
   \bar3 & \bar1 \\
    {\color{red}0}
  \end{ytableau}}
\ar@(ur,r)[]^{\pib_0=\pib_1=-1} \ar[ld]_-{\pib_2} \\
& &
{\begin{ytableau}
   \none & \bar3 \\
   \bar2 & \bar1 \\
    {\color{red}0}
  \end{ytableau}}
\ar@(r,dr)[]^{\pib_0=\pib_2=-1,\ \pib_1=0}
} \] \vskip5pt

Let $\alpha$ be a pseudo-composition of $n$. Denote by $\tau^B_0(\alpha)$ and $\tau^B_1(\alpha)$ the standard tableaux of shape $\alpha$ whose reading words are $w^B_0(\alpha)$ and $w^B_1(\alpha)$, respectively. They can be constructed in the following way.

Denote by $\tau_0$ the tableau obtained by filling with $\bar1,\bar2,\ldots,\bar c$ the empty boxes (if any) on the leftmost column of $\alpha$ from bottom to top, where $c$ is the number of these empty boxes, and then filling with $c+1,c+2,\ldots,n$ the remaining columns of $\alpha$ from top to bottom, starting from the second leftmost column and proceeding toward the rightmost column. One can check that $\pib_i(\tau_0)=-\tau_0$ if $i\in D(\alpha)$ and $\pib_i(\tau_0)=s_i(\tau_0)$ otherwise. Hence $\tau_0=\tau^B_0(\alpha)$. 

Similarly, let $\tau_1$ be the tableau obtained by filling with $1,2,\ldots,r$ the empty boxes (if any) on the bottom row of $\alpha$ from left to right, where $r$ is the number of these empty boxes, and then filling with $-(r+1),-(r+2),\ldots,-n$ the remaining rows of $\alpha$ from right to left, proceeding from the second bottom row toward the top row. One can check that $\pib_i(\tau_1)=-\tau_1$ if $i\in D(\alpha)$ and $\pib_i(\tau_1)=0$ otherwise. Hence $\tau_1=\tau^B_1(\alpha)$ 
and $\bCB_\alpha:=\FF\cdot\tau_1$ is isomorphic to the top of $\bPB_\alpha$.

Given a standard tableau $\tau$ of pseudo-ribbon shape $\alpha$, define $\theta^B(\tau)$ to be the standard tableau of shape $\alpha^c$ obtained by reflecting $\tau$ across the $45^\circ$-diagonal and negating every entry in it. The above construction implies  $\tau^B_1(\alpha^c)=\theta^B(\tau^B_0(\alpha))$. Some examples are given below.  \vskip3pt
\[ \begin{matrix}
\ytableaushort{\none\none\none46,\none135,{\color{red}0}2} &  \quad
\ytableaushort{\none\none{\bar6},\none{\bar5}{\bar4},\none{\bar3},{\bar2}{\bar1},{\color{red}0}} &  \quad
\ytableaushort{\none46,{\bar3}5,{\bar2},{\bar1},{\color{red}0}} &  \quad
\ytableaushort{\none\none\none\none{\bar6},\none\none\none{\bar5}{\bar4},{\color{red}0}123}  \\ \\
\tau^B_0(1,3,2) &  \quad \tau^B_1(0,2,1,2,1) & \quad \tau^B_0(0,1,1,2,2) &  \quad \tau^B_1(3,2,1)
\end{matrix}  \] \vskip5pt
\noindent In fact, one can obtain $\bPB_{\alpha^c}$ from $\bPB_{\alpha}$ by applying $\theta^B$ to standard tableaux of shape $\alpha$, reversing arrows connecting these tableaux, and modifying loops accordingly. This agrees with the anti-isomorphism $w\mapsto w_0w$ between the intervals $[w^B_0(\alpha),w^B_1(\alpha)]$ and $[w^B_0(\alpha^c),w^B_1(\alpha^c)]$ in the left weak order of $\B_n$, where $w_0$ is the longest element of $\B_n$. A uniform approach to this for 0-Hecke algebras will be discussed in Section~\ref{sec:antipodes}.

Lastly, we generalize our analogue of Young's rule from type A to type B. Let $\alpha=(\alpha_1,\ldots,\alpha_\ell)$ be a pseudo-composition and define $\bMB_\alpha := \bPB_{\alpha_1\oplus\cdots\oplus\alpha_\ell}$. Then $\bPB_\alpha$ is naturally isomorphic to a submodule of $\bMB_\alpha$, since one can obtain a standard tableau of shape $\alpha_1\oplus\cdots\oplus\alpha_\ell$ from a standard tableau of  shape $\alpha$ by separating its rows. If $\alpha=0\cdot 1^n$ then $\bMB_\alpha$ carries the regular representation of $\HB_n(0)$. More generally, one has the following corollary of Theorem~\ref{thm:IndPB}.

\begin{corollary}
Let $\alpha=(\alpha_1,\alpha_2,\ldots,\alpha_\ell)$ be a pseudo-composition of $n$. Then 
\[ \bMB_\alpha\cong \left( \bPB_{\alpha_1} \otimes \bP_{\alpha_2} \otimes\cdots \otimes \bP_{\alpha_\ell} \right) \uparrow\,_{\HB_\alpha(0)}^{\HB_n(0)} \cong \bigoplus_{\beta\cleq\alpha} \bPB_\beta. \]
\end{corollary}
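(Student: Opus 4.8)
The plan is to obtain this as the special case of Theorem~\ref{thm:IndPB}(ii) in which every connected component of the generalized pseudo-ribbon is a single row. Concretely, I would apply that theorem to the generalized pseudo-ribbon $\alpha_1\oplus\cdots\oplus\alpha_\ell$, so that $k=\ell$, the first component $\alpha^1=(\alpha_1)$ is a pseudo-ribbon of size $n_1=\alpha_1$, and each later component $\alpha^i=(\alpha_i)$ is a single-row ribbon of size $n_i=\alpha_i$ for $i\ge2$. Since $\bMB_\alpha=\bPB_{\alpha_1\oplus\cdots\oplus\alpha_\ell}$ by definition, Theorem~\ref{thm:IndPB}(ii) applies directly, and the task reduces to unwinding its two isomorphisms in this special case.

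For the first isomorphism I would simply record the identifications: the single-part pseudo-composition gives $\bPB_{\alpha^1}=\bPB_{\alpha_1}$, each single-part composition gives $\bP_{\alpha^i}=\bP_{\alpha_i}$ (this is the trivial $\H_{\alpha_i}(0)$-module $\P_\emptyset^S$, since $D((\alpha_i))=\emptyset$), and the ambient subalgebra $\HB_{n_1,\ldots,n_k}(0)$ coincides with $\HB_\alpha(0)\cong\HB_{\alpha_1}(0)\otimes\H_{\alpha_2}(0)\otimes\cdots\otimes\H_{\alpha_\ell}(0)$ by the definition of the parabolic subalgebra attached to a pseudo-composition. This reproduces the middle term of the corollary with no further work.

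The real content is the second isomorphism, which amounts to the combinatorial identity $[\alpha_1\oplus\cdots\oplus\alpha_\ell]=\{\beta\modelsB n:\beta\cleq\alpha\}$. I would prove this by induction on $\ell$ using the recursion $[\alpha]=\{\beta\cdot\alpha^k,\ \beta\rhd\alpha^k:\beta\in[\alpha^1\oplus\cdots\oplus\alpha^{k-1}]\}$, tracking descent sets: the two gluing operations $\cdot$ and $\rhd$ encode, at each of the $\ell-1$ gaps, the binary choice of whether to keep a break (placing the partial sum $\alpha_1+\cdots+\alpha_i$ into the descent set) or to merge the adjacent parts. Running over all $2^{\ell-1}$ choices then yields precisely the pseudo-compositions whose descent set is an arbitrary subset of $D(\alpha)=\{\alpha_1,\alpha_1+\alpha_2,\ldots,\alpha_1+\cdots+\alpha_{\ell-1}\}$, which is exactly the condition $\beta\cleq\alpha$.

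The step I expect to require the most care is the bookkeeping around the distinguished $0$-box and the possibility $\alpha_1=0$. Because the operation $\rhd$ only ever alters the last part, the first part of every $\beta\in[\alpha_1\oplus\cdots\oplus\alpha_\ell]$ is a partial sum $\alpha_1+\cdots+\alpha_j$, which vanishes exactly when $j=1$ and $\alpha_1=0$ and is positive otherwise. I would check that this matches the descent condition: one has $0\in D(\beta)\iff\beta_1=0$ and $0\in D(\alpha)\iff\alpha_1=0$, so the value $\beta_1=0$ is attainable precisely when $\alpha_1=0$, consistently with $D(\beta)\subseteq D(\alpha)$. Once this case analysis is settled, the remaining gap positions behave exactly as in the type A Young's rule established above, and combining the two isomorphisms completes the proof.
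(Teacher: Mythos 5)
Your proposal is correct and is exactly the derivation the paper intends: the corollary is stated as an immediate consequence of Theorem~\ref{thm:IndPB}(ii) applied to the generalized pseudo-ribbon $\alpha_1\oplus\cdots\oplus\alpha_\ell$, and your verification that $[\alpha_1\oplus\cdots\oplus\alpha_\ell]=\{\beta\modelsB n:\beta\cleq\alpha\}$ (including the $\alpha_1=0$ bookkeeping) fills in the details the paper leaves unwritten.
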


\subsection{0-Hecke algebras of type D}\label{sec:H0D}

Assume $n\ge 2$. The hyperoctahedral group $\B_n$ admits a subgroup $\D_n$ consisting of all signed permutations $w\in\B_n$ with $\neg(w)$ even. It is generated by $\{s_0,s_1,\ldots,s_{n-1}\}$, where $s_0:=\bar2\bar13\cdots n$ and $s_i:=(i,i+1)(-i,-(i+1))$ for all $i\in[n-1]$. This gives the finite irreducible Coxeter system of type $D_n$ whose Coxeter diagram is below.
\[ \xymatrix @R=2pt @C=12pt{
s_0 \ar@{-}[rd] \\
& s_2 \ar@{-}[r] & s_3 \ar@{-}[r] & \cdots \ar@{-}[r] & s_{n-2} \ar@{-}[r] & s_{n-1} \\
s_1 \ar@{-}[ru]
} \]
Subsets of $\{s_0,s_1,\ldots,s_{n-1}\}$ are still indexed by pseudo-compositions $\alpha$ of $n$, which we denote by $\alpha\modelsD n$ for consistency of notation.
The descent set of $w\in\D_n$ is $D(w)=\{i: 0\le i\le n-1, w(i)>w(i+1)\}$ and the length of $w\in\D_n$ is $\ell(w)=\inv(w)+\nsp(w)$, where $w(0):=-w(2)$.

The 0-Hecke algebra $\HD_n(0)$ of $\D_n$ has generating sets $\{\pi_0,\pi_1,\ldots,\pi_{n-1}\}$ and $\{\pib_0,\pib_1,\ldots,\pib_{n-1}\}$, both satisfying the same braid relations as $\D_n$ but different quadratic relations $\pi_i^2=\pi_i$ and $\pib_i^2=-\pib_i$. One can realize $\pi_i$ as operators on $\ZZ^n$: if $0\le i\le n-1$ and $a=(a_1,\ldots,a_n)\in\ZZ^n$ then 
\[ \pi_i(a_1,\ldots,a_n):=
\begin{cases}
(-a_2,-a_1,a_3,\ldots, a_n), & \text{if } i=0,\ a_1+a_2>0,\\
(a_1,\ldots,a_{i+1},a_i,\ldots,a_n), & \text{if } 1\leq i\leq n-1,\ a_i<a_{i+1},\\
(a_1,\ldots,a_n), & \text{otherwise}.
\end{cases} \] 

Let $\alpha=(\alpha_1,\alpha_2,\ldots,\alpha_\ell)\modelsD n\ge2$. The \emph{parabolic subgroup} $\D_\alpha$ of $\D_n$ is generated by $\{s_i:i\in D(\alpha^c)\}$, and the \emph{parabolic subalgebra} $\HD_\alpha(0)$ of $\HD_n(0)$ is generated by $\{\pi_i: i\in D(\alpha^c)\}$. If $\alpha_1\ge2$ then 
\[ \D_\alpha \cong \D_{\alpha_1}\times\SS_{\alpha_2}\times \cdots\times\SS_{\alpha_\ell} \qand
\HD_\alpha(0) \cong \HD_{\alpha_1}(0) \otimes\H_{\alpha_2}(0)\otimes\cdots\otimes \H_{\alpha_\ell}(0).\]
The \emph{descent class of $\alpha$ in $\D_n$} is the set $\{w\in\D_n: D(w) = D(\alpha) \}$, which is an interval under the left weak order of $\D_n$, denoted by $[w_0^D(\alpha),w_1^D(\alpha)]$. 


Now let $\alpha$ be a generalized pseudo-ribbon of size $n\ge2$. A \emph{type D standard tableau $\tau$ of shape $\alpha$} is a filling  of the empty boxes in $\alpha$ with $\pm1,\pm2,\ldots,\pm n$, such that each row is increasing from left to right and each column is increasing from top to bottom, \emph{with the extra 0-entry interpreted as $-w(2)$}, and that the \emph{reading word} $w(\tau)$ belongs to $\D_n$. Here the reading word $w(\tau)$ is obtained by reading the integers in $\tau$ in the same way as in type B, again \emph{excluding the extra 0-entry}. Taking reading word gives a bijection between type D standard tableaux of shape $\alpha$ and the union of the descent classes of $\beta$ in $\D_n$ for all $\beta\in[\alpha]$. We define the \emph{descents} of $\tau$ to be the descents of $w(\tau)^{-1}$. One sees that $0$ is a descent of $\tau$ if and only if one of the following conditions holds:
\begin{itemize}
\item
both $\bar 1$ and $\bar2$ appear in $\tau$, 
\item
both $-1$ and $2$ appear in $\tau$, with $-1$ on a higher row than $2$,
\item
both $1$ and $-2$ appear in $\tau$, with $-2$ on a higher row than $1$,
\end{itemize}
and a positive integer $i\in[n-1]$ is a descent of $\tau$ if and only if one of the following conditions holds:
\begin{itemize}
\item
both $i$ and $i+1$ appear in $\tau$, with $i$ on a row higher than $i+1$,
\item
both $-i$ and $-(i+1)$ appear in $\tau$, with $-i$ on a row lower than $-(i+1)$,
\item
both $i$ and $-(i+1)$ appear in $\tau$.
\end{itemize}

We denote by $\bPD_\alpha$ the vector space with a basis consisting of all type D standard tableaux of shape $\alpha$. Let $\tau$ be a tableau in this basis. Define $s_0(\tau)$ to be the tableau obtained by swapping the absolute values of $\pm1$ and $\pm2$ in $\tau$ and negating the original signs at these two positions. Define $s_i(\tau)$ to be the tableau obtained by swapping the absolute values of $\pm i$ and $\pm(i+1)$ in $\tau$ but leaving their signs invariant. For every $i\in\{0,1,2,\ldots,n-1\}$ we define
\[ \pib_i(\tau)=
\begin{cases}
-\tau, & \textrm{ if $i\in D(w(\tau)^{-1})$},\\
0, & \textrm{ if $i\notin D(w(\tau)^{-1})$ and $s_i(\tau)$ is not type D standard},\\
s_i(\tau), & \textrm{ if  $i\notin D(w(\tau)^{-1})$ and $s_i(\tau)$ is type D standard}.
\end{cases} \]

\begin{theorem}\label{thm:IndPD}
(i) If $\alpha\modelsD n\ge2$ then $\bPD_\alpha \cong \P_I^S$ where $I=\{s_i:i\in D(\alpha)\}$.

\noindent(ii) Let $\alpha=\alpha^1 \oplus \cdots \oplus \alpha^k$ be a generalized pseudo-ribbon of size $n\ge2$. Let 
\[ I = \{s_i: i\in D(\alpha^1\rhd\cdots\rhd \alpha^k)\} \qand
J = \{s_j:j\in D(0\cdot1^{n_1}\rhd\cdots\rhd 1^{n_k}) \}.\]
Then \[ \bPD_\alpha \cong \IJS \cong \bigoplus_{\beta\in[\alpha]}\bPD_\beta. \]
If in addition $\alpha^1\modelsD n_1\ge2$ and $\alpha^i \models n_i$ for $i=2,\ldots,k$ then 
\[\bPD_\alpha \cong \left(\bPD_{\alpha^1}\otimes\bP_{\alpha_2}\otimes\cdots\otimes \bP_{\alpha^k}\right) \uparrow\,_{\HD_{n_1,\ldots,n_k}(0)}^{\HD_n(0)}.\]
\end{theorem}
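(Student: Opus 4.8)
The plan is to reproduce the argument of Theorem~\ref{thm:IndP}, reducing the statement to the general machinery of Theorem~\ref{thm:IJS} applied to the Coxeter system $(\D_n,S)$. Since (i) is the special case $k=1$ of (ii), it suffices to treat (ii). The heart of the matter is to show that the reading-word map $\tau\mapsto\pib_{w(\tau)}\pi_{w_0(J\setminus I)}$ is an $\HD_n(0)$-module isomorphism $\bPD_\alpha\cong\IJS$; once this is in hand, both displayed decompositions follow by quoting Theorem~\ref{thm:IJS}(ii) with $W=\D_n$.

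First I would check that this map is a vector-space isomorphism. By the stated reading-word bijection between type D standard tableaux of shape $\alpha$ and $\bigcup_{\beta\in[\alpha]}\{w\in\D_n:D(w)=D(\beta)\}$, together with the basis \eqref{eq:BasisIJS}, this amounts to the purely combinatorial identity $\{D(\beta):\beta\in[\alpha]\}=\{K:I\subseteq K\subseteq J^c\cup I\}$ for the stated $I$ and $J$; this is obtained by reading off $I$ and $J$ from $\alpha^1\rhd\cdots\rhd\alpha^k$ and $0\cdot1^{n_1}\rhd\cdots\rhd1^{n_k}$ exactly as in type A and B, the only variation among the $\beta\in[\alpha]$ being whether each of the $k-1$ junctions is glued by $\cdot$ or by $\rhd$.

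The main step is to verify that the map intertwines the actions, i.e.\ that the tableau action of $\pib_i$ matches \eqref{eq:HW0action} under the substitution $w=w(\tau)$. The key facts are that the tableau descents are precisely $D(w(\tau)^{-1})$, the left descents of $w(\tau)$, so the first line of \eqref{eq:HW0action} ($\ell(s_iw)<\ell(w)$, giving $-\tau$) matches the ``$i$ is a descent'' case; and that $w(s_i(\tau))=s_i\,w(\tau)$, so the remaining two lines are governed by whether $s_i(\tau)$ is type D standard, equivalently $D(s_iw)\subseteq J^c\cup I$, or not. For $i\ge1$ these checks are identical to those in type A and B. The hard part is the generator $s_0$: unlike type B, where $s_0$ merely negates the entry $\pm1$, in type D $s_0=\bar2\bar13\cdots n$ swaps and negates, so I must verify (a) that swapping the absolute values of $\pm1,\pm2$ while negating their signs has reading word $s_0\,w(\tau)$, using the convention $w(0):=-w(2)$, and (b) that the three listed conditions for ``$0$ is a descent'' are exactly the condition $0\in D(w(\tau)^{-1})$, i.e.\ $w(\tau)^{-1}(1)+w(\tau)^{-1}(2)<0$. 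Tracking the signs and reading positions of the boxes containing $\pm1$ and $\pm2$ shows the three cases are precisely those making this sum negative; this bookkeeping, together with the absence of a $\neg$-term in the type D length formula $\ell(w)=\inv(w)+\nsp(w)$, is where the argument genuinely differs from types A and B and demands the most care.

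Finally, with $\bPD_\alpha\cong\IJS$ established, the decomposition $\IJS\cong\bigoplus_{K\subseteq J^c}\P_{I\cup K}$ of Theorem~\ref{thm:IJS}(ii) yields $\bPD_\alpha\cong\bigoplus_{\beta\in[\alpha]}\bPD_\beta$ via part (i), for any generalized pseudo-ribbon with $n\ge2$. For the induction-product formula I would instead invoke the first isomorphism $\P_I^J\uparrow\,_{\HD_{n_1,\ldots,n_k}(0)}^{\HD_n(0)}\cong\IJS$ of Theorem~\ref{thm:IJS}(ii), identifying $W_J$ with $\D_{n_1}\times\SS_{n_2}\times\cdots\times\SS_{n_k}$ and $\P_I^J$ with $\bPD_{\alpha^1}\otimes\bP_{\alpha^2}\otimes\cdots\otimes\bP_{\alpha^k}$. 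This identification of the parabolic subgroup as a product of one type-D factor and several type-A factors is exactly why the extra hypothesis $\alpha^1\modelsD n_1\ge2$ (with $\alpha^i\models n_i$ for $i\ge2$) is needed: the parabolic structure $\HD_\alpha(0)\cong\HD_{\alpha_1}(0)\otimes\H_{\alpha_2}(0)\otimes\cdots$ only holds when $\alpha_1\ge2$.
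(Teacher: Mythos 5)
Your proposal is correct and follows exactly the route the paper intends: the paper's own proof is simply ``this can be proved similarly to Theorem~\ref{thm:IndP},'' i.e.\ the map $\tau\mapsto\pib_{w(\tau)}\pi_{w_0(J\setminus I)}$ combined with Theorem~\ref{thm:IJS} applied to $(\D_n,S)$, which is precisely your argument. Your additional care with the generator $s_0$ (verifying $w(s_0(\tau))=s_0\,w(\tau)$ and matching the three descent conditions to $w(\tau)^{-1}(1)+w(\tau)^{-1}(2)<0$) and with the role of the hypothesis $n_1\ge 2$ in identifying $W_J\cong\D_{n_1}\times\SS_{n_2}\times\cdots\times\SS_{n_k}$ supplies exactly the details the paper leaves implicit.
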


\begin{proof}
This can be proved similarly to Theorem~\ref{thm:IndP}.
\end{proof}

In particular, a projective indecomposable $\HD_n(0)$-module $\bPD_\alpha$ has a basis consisting of type D standard tableaux of shape $\alpha\modelsD n$, which is in bijection with the descent class $[w^D_0(\alpha),w^D_1(\alpha)]$ of $\alpha$ in $\D_n$ by taking reading word. See the following example.

\[  \xymatrix @R=8pt{ \bPD_{0211} 
& \ytableaushort{\none{\bar1},\none2,{\bar4}3,{\color{red}0}} 
\ar@(r,rd)[]^{\pib_0=\pib_2=\pib_3=-1} \ar[d]^-{\pib_1} \\
& \ytableaushort{\none{\bar2},\none1,{\bar4}3,{\color{red}0}} 
\ar@(lu,l)[]_{\substack{\pib_0=\pib_1=\pib_3=-1}} \ar[d]^-{\pib_2} \\
& \ytableaushort{\none{\bar3},\none1,{\bar4}2,{\color{red}0}} \ar@(lu,l)[]_{\substack{\pib_1=\pib_2=-1}} 
\ar[ld]^{\pib_0} \ar[rd]_{\pib_3} \\
\ytableaushort{\none{\bar3},\none{\bar2},{\bar4}{\bar1},{\color{red}0}} 
\ar@(lu,l)[]_{\substack{\pib_0=\pib_1=\pib_2=-1}} \ar[rd]^-{\pib_3} & &
\ytableaushort{\none{\bar4},\none1,{\bar3}2,{\color{red}0}}
\ar@(r,rd)[]^{\substack{\pib_1=\pib_2=\pib_3=-1}} \ar[ld]_-{\pib_0} \\
& \ytableaushort{\none{\bar4},\none{\bar2},{\bar3}{\bar1},{\color{red}0}}
\ar@(r,rd)[]^{\substack{\pib_0=\pib_1=\pib_3=-1}} \ar[d]^-{\pib_2} \\
& \ytableaushort{\none{\bar4},\none{\bar3},{\bar2}{\bar1},{\color{red}0}} 
\ar@(r,rd)[]^{\substack{\pib_0=\pib_2=\pib_3=-1\\ \pib_1=0}}  }
\]\vskip5pt 

The type D standard tableaux $\tau^D_0(\alpha)$ and $\tau^D_1(\alpha)$ corresponding to $w^D_0(\alpha)$ and $w^D_1(\alpha)$ can be obtained in the following way. Suppose that $c_1$ and $c_2$ are the numbers of empty boxes in the leftmost two columns of $\alpha$.  If $c_1\ne1$ and $\tau^B_0(\alpha)$ has an even number of signs then let $\tau_0=\tau^B_0(\alpha)$. If $c_1\ne1$ and $\tau^B_0(\alpha)$ has an odd number of signs then let $\tau_0$ be the tableau obtained from $\tau^B_0(\alpha)$ by negating the sign of $\pm1$. If $c_1=1$ then let $\tau_0$ be the tableau obtained by filling the second leftmost column of $\alpha$ with $-1,2,\ldots,c_2$ from top to bottom, then filling the leftmost column with $-c_2-1$, and then filling the remaining columns with $c_2+2,\ldots,n$ from top to bottom, proceeding from the third leftmost column to the rightmost column. One can check that $\pib_i(\tau_0)=-\tau_0$ for all $i\in D(\alpha)$ and $\pib_i(\tau_0)=s_i(\tau)$ for all $i\notin D(\alpha)$. Hence $\tau_0=\tau^D_0(\alpha)$.

Similarly, let $r_1$ and $r_2$ be the number of empty boxes on the bottom two rows of $\alpha$. If $r_1\ne1$ and $\tau^B_1(\alpha)$ has an even number of signs then let $\tau_1=\tau^B_1(\alpha)$. If $r_1\ne1$ and $\tau^B_1(\alpha)$ has an odd number of signs then let $\tau_1$ be the tableau obtained from $\tau^B_1(\alpha)$ by negating the sign of $\pm1$. If $r_1=1$ then let $\tau_1$ be the tableau obtained by filling the second bottom column of $\alpha$ with $(-1)^n,-2,\ldots,-r_2$ from right to left, then filling the bottom column with $r_2+1$, and then filling the remaining columns with $-r_2-2,\ldots,-n$ from right to left, proceeding from the third bottom column to the top column. One can check that $\pib_i(\tau_1)=-\tau_1$ for all $i\in D(\alpha)$ and $\pib_i(\tau_1)=0$ for all $i\notin D(\alpha)$.  Hence $\tau_1=\tau^D_1(\alpha)$ and $\bCD_\alpha:=\FF \tau_1$ is isomorphic to the top of $\bPD_\alpha$. Some examples are provided below.
\vskip5pt\[ \begin{matrix}
\ytableaushort{\none\none\none46,\none135,{\color{red}0}2} &\qquad 
\ytableaushort{\none46,{\bar3}5,{\bar2},{1},{\color{red}0}} &\qquad
\ytableaushort{\none\none\none5,\none{\bar1}46,{\bar3}2,{\color{red}0}} \\ \\
\tau^D_0(1,3,2)&\qquad \tau^D_0(0,1,1,2,2)  &\qquad \tau^D_0(0,2,3,1) \\ \\
\ytableaushort{\none\none{\bar6},\none{\bar5}{\bar4},\none{\bar3},{\bar2}{\bar1},{\color{red}0}} &\qquad
\ytableaushort{\none\none\none\none{\bar6},\none\none\none{\bar5}{\bar4},{\color{red}0}{\bar1}23} &\qquad
\ytableaushort{\none\none{\bar6}{\bar5},\none\none{\bar4},\none{\bar2}1,{\color{red}0}3}  \\  \\
\tau^D_1(0,2,1,2,1) &\qquad \tau^D_1(3,2,1) &\qquad \tau^D_1(1,2,1,2)
\end{matrix} \]\vskip5pt

Let $\tau$ be a type D standard tableau of pseudo-ribbon shape $\alpha$. Recall that $\theta^B(\tau)$ is obtained by reflecting $\tau$ across the $45^\circ$-diagonal and negating every entry in it. If $\theta^B(\tau)$ contains an even number of signs then define $\theta^D(\tau)=\theta^B(\tau)$; otherwise define $\theta^D(\tau)$ to be the tableau obtained by negating $\pm1$ in $\theta^B(\tau)$. One sees that $\theta^D(\tau)$ is a type D standard tableau of shape $\alpha^c$. The above construction implies that $\tau^D_1(\alpha^c)=\theta^D(\tau^D_0(\alpha))$. In fact, one can obtain $\bPD_{\alpha^c}$ from $\bPD_{\alpha}$ by applying $\theta^D$ to standard tableaux of shape $\alpha$, reversing arrows connecting these tableaux, and modifying loops accordingly. This agrees with the anti-isomorphism $w\mapsto w_0w$ between the intervals $[w^D_0(\alpha),w^D_1(\alpha)]$ and $[w^D_0(\alpha^c),w^D_1(\alpha^c)]$ in the weak order of $\D_n$, where $w_0$ is the longest element of $\D_n$. See the two examples of $\bPD_{0211}$ and $\bPD_{13}$ provided earlier.  A uniform approach to this for 0-Hecke algebras will be discussed in Section~\ref{sec:antipodes}.

Lastly, let $\alpha=(\alpha_1,\ldots,\alpha_\ell)$ be a pseudo-composition of $n$ and define $\bMD_\alpha := \bPD_{\alpha_1\oplus\cdots\oplus\alpha_\ell}$. Then $\bPD_\alpha$ is naturally isomorphic to a submodule of $\bMD_\alpha$. If $\alpha=0\cdot 1^n$ then $\bMD_\alpha$ carries the regular representation of $\HD_n(0)$. More generally, one has the following corollary of Theorem~\ref{thm:IndPD}.

\begin{corollary}
Let $\alpha=(\alpha_1,\alpha_2,\ldots,\alpha_\ell)\modelsD n\ge2$. Then 
\[ \bMD_\alpha 
\cong \bigoplus_{\beta\cleq\alpha} \bPD_\beta. \]
\end{corollary}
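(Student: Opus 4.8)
The plan is to deduce the corollary directly from Theorem~\ref{thm:IndPD}(ii), in exactly the same way that the type A and type B analogues of Young's rule follow from Theorems~\ref{thm:IndP}(ii) and~\ref{thm:IndPB}(ii). By definition $\bMD_\alpha = \bPD_{\alpha_1\oplus\cdots\oplus\alpha_\ell}$, the module attached to the generalized pseudo-ribbon of size $n$ whose $\ell$ connected components are the single parts $(\alpha_1),\ldots,(\alpha_\ell)$, the first of which, $(\alpha_1)$, is the (possibly empty) pseudo-ribbon carrying the extra $0$-box. Since $n\ge2$, this generalized pseudo-ribbon meets the hypothesis of Theorem~\ref{thm:IndPD}(ii), and its direct-sum conclusion gives at once
\[ \bMD_\alpha \cong \bigoplus_{\beta\in[\alpha_1\oplus\cdots\oplus\alpha_\ell]} \bPD_\beta. \]
I would use only this direct-sum part, not the induced-module formula, which requires the stronger hypothesis $\alpha^1\modelsD n_1\ge2$; the decomposition itself holds for every pseudo-composition of $n\ge2$, which is why the corollary is stated without an induction description.

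It then remains to identify the index set $[\alpha_1\oplus\cdots\oplus\alpha_\ell]$ with $\{\beta\modelsD n : \beta\cleq\alpha\}$, and this is the only step carrying genuine content. I would argue it from the recursive definition of $[\cdot]$: passing from $[\alpha_1\oplus\cdots\oplus\alpha_{j-1}]$ to $[\alpha_1\oplus\cdots\oplus\alpha_j]$ glues the new part $(\alpha_j)$ onto each existing $\beta$ either by concatenation $\beta\cdot(\alpha_j)$, which retains the descent at that junction, or by $\beta\rhd(\alpha_j)$, which absorbs $\alpha_j$ into the last part and deletes that descent. The $\ell-1$ junctions correspond bijectively to the partial sums $\alpha_1,\ \alpha_1+\alpha_2,\ \ldots,\ \alpha_1+\cdots+\alpha_{\ell-1}$, which are exactly the elements of $D(\alpha)$; making the $\cdot/\rhd$ choices independently therefore realizes precisely the pseudo-compositions whose descent set is an arbitrary subset of $D(\alpha)$. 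Hence $\beta$ ranges over exactly the pseudo-compositions of $n$ with $D(\beta)\subseteq D(\alpha)$, that is, over $\{\beta : \beta\cleq\alpha\}$, completing the proof.

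I expect no real obstacle here, since this is the identical bookkeeping already used in types A and B. The one point I would check explicitly is the boundary case $\alpha_1=0$: then $(\alpha_1)$ is the empty pseudo-ribbon consisting of the $0$-box alone, the first junction carries the descent $0\in D(\alpha)$, and the choice $\beta\rhd(\alpha_2)=(\alpha_2)$ versus $\beta\cdot(\alpha_2)=(0,\alpha_2)$ deletes or retains this descent just as at any other junction. Thus the counting is unaffected and the identification of the index set holds uniformly for all pseudo-compositions of $n\ge2$.
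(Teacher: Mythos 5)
Your proof is correct and follows exactly the route the paper intends: the corollary is deduced from the direct-sum part of Theorem~\ref{thm:IndPD}(ii) applied to $\alpha_1\oplus\cdots\oplus\alpha_\ell$, together with the identification of $[\alpha_1\oplus\cdots\oplus\alpha_\ell]$ with $\{\beta:\beta\cleq\alpha\}$ via the junction/descent correspondence. Your observations that only the direct-sum conclusion (not the induction formula requiring $\alpha^1\modelsD n_1\ge2$) is needed, and that the $\alpha_1=0$ boundary case behaves uniformly, are both accurate and explain why the type D corollary is stated without an induced-module description.
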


\section{Quasisymmetric functions and noncommutative symmetric functions}\label{sec:Char}

In this section we investigate connections between the representation theory of 0-Hecke algebras of type A, B, and D and quasisymmetric functions and noncommutative symmetric functions of type A, B, and D.

\subsection{Type A}

Let $X=\{x_1,x_2,\ldots\}$ be a totally ordered set of commutative variables. If $\alpha=(\alpha_1,\ldots,\alpha_\ell)\models n$ then the \emph{monomial quasisymmetric function} $M_\alpha$ and the \emph{fundamental quasisymmetric function} $F_\alpha$ are defined as
\[ M_\alpha := \sum_{1\leq i_1<\cdots<i_\ell} x_{i_1}^{\alpha_1}\cdots x_{i_\ell}^{\alpha_\ell} \qand
F_\alpha := \sum_{ \substack{ 1\le i_1\le \cdots\le i_n\\ j\in D(\alpha) \Rightarrow i_j<i_{j+1} }} x_{i_1}\cdots x_{i_n}. \]
One see that $F_\alpha=\sum_{\alpha\cleq\beta}M_\beta$ is the generating function of all fillings of the ribbon $\alpha$ with positive integers such that each row is weakly increasing from left to right and each column is strictly decreasing from top to bottom. 

The Hopf algebra $\QSym$ has two bases $\{F_\alpha\}$ and $\{M_\alpha\}$ where $\alpha$ runs through all compositions. Given $w\in \SS_n$, let $F_w:=F_\alpha$ where $\alpha\models n$ satisfies $D(\alpha)=D(w)$. The product of $\QSym$ is determined by 
\begin{equation}
F_uF_v=\sum_{w\in u\shuffle v} F_w, \qquad \forall u\in\SS_m,\ \forall v\in\SS_n.
\end{equation}
Here $u\shuffle v$ is the set of all permutations in $\SS_{m+n}$ obtained by shuffling $u(1),\ldots,u(m)$ and $v(1)+m,\ldots,v(n)+m$. This is called the \emph{(shifted) shuffle product}. For example, ${\color{red}21}\shuffle{12} = \{{\color{red}21}34, {{\color{red}2}3{\color{red}1}4}, {3{\color{red}21}4}, {{\color{red}2}34{\color{red}1}}, {3{\color{red}2}4{\color{red}1}}, {34{\color{red}21}} \}.$

On the other hand, the coproduct of $\QSym$ is defined by $\Delta f(X):=f(X+Y)$ for all $f\in\QSym$, where $X+Y=\{x_1,x_2,\ldots,y_1,y_2,\ldots\}$ is a totally ordered set of commutative variables. If $\alpha=(\alpha_1,\ldots,\alpha_\ell)\models n$ then
\[ \Delta F_\alpha = \sum_{0\le i\le n} F_{\alpha_{\leq i}}\otimes F_{\alpha_{>i}} \qand
\Delta M_\alpha = \sum_{0\le i\le \ell} M_{(\alpha_1,\ldots,\alpha_i)}\otimes M_{(\alpha_{i+1},\ldots,\alpha_\ell)} \]
where $\alpha_{\le i}$ and $\alpha_{>i}$ are two ribbons obtained by splitting the ribbon $\alpha$ between its $i$th and $(i+1)$th boxes in the same order as the reading word of a tableau of shape $\alpha$. For instance, one has
$ \Delta F_{12} = 1\otimes F_{12}+ F_1 \otimes F_{2}+ F_{11} \otimes F_1 + F_{12} \otimes1. $

Let $\bX=\{\bx_i:i\in\ZZ\}$ be a totally ordered set of noncommutative variables. The Hopf algebra $\NSym$ is the free associative algebra $\ZZ\langle\bh_1,\bh_2,\ldots\rangle$ generated by $\bh_k:=\sum_{i_1\leq\cdots\leq i_k}\bx_{i_1}\cdots \bx_{i_k}$ for all $k\ge1$. It has two bases $\{\bh_\alpha\}$ and $\{\bs_\alpha\}$ where $\alpha$ runs through all compositions. If $\alpha=(\alpha_1,\ldots,\alpha_\ell)\models n$ then the \emph{complete homogeneous noncommutative symmetric function} $\bh_\alpha$ and the \emph{noncommutative ribbon Schur function} $\bs_\alpha$ are defined by\begin{equation}\label{eq:bhbs}
\bh_\alpha:=\bh_{\alpha_1}\cdots \bh_{\alpha_\ell} = \sum_{\beta\cleq \alpha}\bs_\beta \qand
\bs_\alpha:=\sum_{\beta\cleq\alpha}(-1)^{\ell(\alpha)-\ell(\beta)}\bh_\beta.
\end{equation}
It is more common to define $\NSym$ using noncommutative variables indexed by positive integers, but we need $\bX$ in order to define type B and D analogues of $\NSym$ later.
The graded Hopf algebras $\QSym$ and $\NSym$ are dual to each other, with bases $\{M_\alpha\}$ and $\{F_\alpha\}$ dual to $\{\bh_\alpha\}$ and $\{\bs_\alpha\}$, respectively.

We next extend the definition of $\bs_\alpha$ to generalized ribbon shapes. If $\alpha$ is a generalized ribbon of size $n$ then we define $\bs_\alpha$ to be the sum of $\bx_\tau$ for all semistandard tableaux of shape $\alpha$, where $\bx_\tau:=\bx_{w_1}\cdots\bx_{w_n}$ if $w(\tau)=w_1\cdots w_n$. We will show that this definition of $\bs_\alpha$ agrees with the earlier definition. 

Suppose that $\alpha = \alpha^1\oplus\cdots\oplus\alpha^k$ and $\beta = \beta^1\oplus\cdots\oplus\beta^\ell$ are generalized ribbons. We write
\begin{eqnarray*}
 \alpha \cdot \beta &:=& \alpha^1\oplus\cdots\oplus(\alpha^k\cdot\beta^1) \oplus\cdots\oplus\beta^\ell \qand \\
 \alpha \rhd \beta &:=& \alpha^1\oplus\cdots\oplus(\alpha^k\rhd\beta^1) \oplus\cdots\oplus\beta^\ell.
\end{eqnarray*}
Let $\tau$ and $\eta$ be semistandard tableaux of generalized ribbon shape $\alpha$ and $\beta$, respectively. There is a uniquely way to glue $\tau$ and $\eta$ to get a semistandard tableau $\tau*\eta$ of generalized ribbon shape either $\alpha\cdot\beta$ or $\alpha\rhd\beta$, depending on whether the last entry of $w(\tau)$ is strictly larger than the first entry of $w(\eta)$. The reading word of the semistandard tableau $\tau*\eta$ is equal to the concatenation of $w(\tau)$ and $w(\eta)$. Some examples are given below.
\[ \begin{matrix}
\raisebox{-5pt}{ \young(:22,14) }& * & \raisebox{-5pt}{ \young(:1334,14)} 
& = & \raisebox{-12pt}{ \young(:::1334,::14,:22,14) } \\ \\
\raisebox{-5pt}{ \young(:13,23) } & * & \raisebox{-8pt}{ \young(::44,:15,34) } 
& = & \raisebox{-12pt}{ \young(:::::44,::::15,:1334,23) }
\end{matrix} \]\vskip5pt

\begin{proposition}\label{prop:NSym}
Let $\alpha=\alpha^1\oplus\cdots\oplus\alpha^k$ be a generalized ribbon and let $\beta$ be a another generalized ribbon. Then
\[ \bs_{\alpha}\cdot\bs_\beta = \bs_{\alpha\cdot\beta} + \bs_{\alpha\rhd\beta} \qand
\bs_\alpha = \bs_{\alpha^1} \cdots \bs_{\alpha^k} = \sum_{\gamma\in[\alpha]} \bs_\gamma. \]
\end{proposition}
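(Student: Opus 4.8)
The plan is to work throughout with the new (semistandard-tableau) definition of $\bs_\gamma$ for generalized ribbons, prove the three identities combinatorially, and then deduce that the new definition agrees with the earlier one. First, for the product formula $\bs_\alpha\bs_\beta = \bs_{\alpha\cdot\beta}+\bs_{\alpha\rhd\beta}$, I would expand $\bs_\alpha\bs_\beta = \sum_{\tau,\eta}\bx_\tau\bx_\eta$, the sum ranging over pairs of semistandard tableaux $\tau$ of shape $\alpha$ and $\eta$ of shape $\beta$. The key input, already recorded just before the statement, is that $\bx_\tau\bx_\eta = \bx_{\tau*\eta}$, since the reading word of $\tau*\eta$ is the concatenation $w(\tau)w(\eta)$. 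I would then argue that $(\tau,\eta)\mapsto\tau*\eta$ is a bijection from all such pairs onto the disjoint union of semistandard tableaux of shapes $\alpha\cdot\beta$ and $\alpha\rhd\beta$, with inverse obtained by cutting a tableau at the junction between the last component of $\alpha$ and the first component of $\beta$. The dichotomy is governed exactly by the comparison of the last entry of $w(\tau)$ with the first entry of $w(\eta)$: the column-strict condition at the shared column of $\alpha^k\cdot\beta^1$ forces last $>$ first (producing shape $\alpha\cdot\beta$), while the row-weak condition along the merged row of $\alpha^k\rhd\beta^1$ forces last $\le$ first (producing shape $\alpha\rhd\beta$); these cases are mutually exclusive and exhaustive. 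Summing over each class then gives $\bs_{\alpha\cdot\beta}$ and $\bs_{\alpha\rhd\beta}$.

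For $\bs_\alpha = \bs_{\alpha^1}\cdots\bs_{\alpha^k}$, I would use the defining property of a generalized ribbon: since $\alpha^{i+1}$ lies strictly to the northeast of $\alpha^i$, the connected components occupy pairwise disjoint rows and disjoint columns, so no semistandard constraint (row-weak or column-strict) links entries in different components. Hence a semistandard tableau of shape $\alpha$ is exactly an independent choice of semistandard tableaux $\tau^1,\dots,\tau^k$ of the component shapes, and reading from the bottom component upward gives $w(\tau)=w(\tau^1)\cdots w(\tau^k)$; factoring the sum over tuples into a product of sums yields the identity. Then $\bs_\alpha = \sum_{\gamma\in[\alpha]}\bs_\gamma$ follows by induction on the number $k$ of components. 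The base case $k=1$ is the definition $[\alpha^1]=\{\alpha^1\}$. For the inductive step, set $\alpha'=\alpha^1\oplus\cdots\oplus\alpha^{k-1}$ and combine the factorization just proved with the induction hypothesis for $\alpha'$ to obtain $\bs_\alpha=\bs_{\alpha'}\bs_{\alpha^k}=\big(\sum_{\beta\in[\alpha']}\bs_\beta\big)\bs_{\alpha^k}$; every $\beta\in[\alpha']$ is an ordinary ribbon, so the product formula gives $\bs_\beta\bs_{\alpha^k}=\bs_{\beta\cdot\alpha^k}+\bs_{\beta\rhd\alpha^k}$, and summing reproduces precisely the recursive definition of $[\alpha]$.

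To finish, I would record that these identities pin down the agreement with the earlier definition of $\bs_\gamma$ for ordinary ribbons $\gamma$. Specializing to $\alpha=\alpha_1\oplus\cdots\oplus\alpha_\ell$ with single-row components, the factorization gives $\bs_\alpha=\bh_{\alpha_1}\cdots\bh_{\alpha_\ell}=\bh_{(\alpha_1,\dots,\alpha_\ell)}$ (a one-row shape contributes exactly $\bh_{\alpha_i}$), while the sum identity gives $\bs_\alpha=\sum_{\gamma\cleq(\alpha_1,\dots,\alpha_\ell)}\bs_\gamma$, using that gluing single rows realizes all coarsenings. Comparing with $\bh_\alpha=\sum_{\gamma\cleq\alpha}\bs_\gamma$ from \eqref{eq:bhbs} and inverting the resulting unitriangular system over $\cleq$ forces the new and old $\bs_\gamma$ to coincide for every composition $\gamma$, so no circularity arises from having used the new definition throughout.

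The main obstacle, and the step requiring the most care, is the bijection in the product formula: one must check that cutting at the junction always returns two genuinely semistandard tableaux, and that the single inequality between the last entry of $w(\tau)$ and the first entry of $w(\eta)$ simultaneously determines which of $\alpha\cdot\beta$ or $\alpha\rhd\beta$ is produced and partitions all pairs without overlap. Once this dichotomy is secured, the remaining arguments are immediate: the reading-word concatenation property and the disjointness of generalized-ribbon components both follow directly from the setup preceding the statement, and the last two identities are a clean induction built on the first.
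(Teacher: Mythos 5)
Your proposal is correct and follows essentially the same route as the paper: the gluing bijection $(\tau,\eta)\mapsto\tau*\eta$ (with the dichotomy governed by comparing the last entry of $w(\tau)$ with the first entry of $w(\eta)$) gives the product formula, and the factorization over components plus induction on $k$ gives the remaining identities, exactly as in the paper's (much terser) proof. Your closing paragraph on reconciling the two definitions of $\bs_\alpha$ via the unitriangular system over $\cleq$ reproduces the paper's discussion immediately following the proposition, so nothing is missing or circular.
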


\begin{proof}
The above argument on $\tau*\eta$ implies the first equality. By induction on $k$ one has the second equality.
\end{proof}

\begin{remark}
For an arbitrary skew shape $\lambda/\mu$, one can still define $\bs_{\lambda/\mu}$ to be the sum of $\bx_\tau$ for all semistandard tableaux of shape $\bs_{\lambda/\mu}$. Using the $P$-partition theory one can show that $\bs_{\lambda/\mu}$ lies in the Hopf algebra of free quasisymmetric functions (cf. \cite{NCSF_VI}), but not in $\NSym$ in general.
\end{remark}

In particular, let $\alpha=(\alpha_1,\ldots,\alpha)$ be a composition. We define $\bh_\alpha:=\bs_{\alpha_1\oplus\cdots\oplus\alpha_\ell}$. One sees that $\bh_k=\sum_{i_1\leq\cdots\leq i_k}\bx_{i_1}\cdots \bx_{i_k}$ holds for all $k\ge1$ according to this definition. Moreover, Proposition~\ref{prop:NSym} implies \eqref{eq:bhbs}. Thus the two definitions of $\bh_\alpha$ and $\bs_\alpha$ agree with each other for all compositions $\alpha$.

Proposition~\ref{prop:NSym} immediately implies a (well-known) product formula for $\NSym$. On the other hand, the coproduct of $\NSym$ is defined by $ \Delta\bh_k=\sum_{0\le i\le k} \bh_i\otimes\bh_{k-i} $ where $\bh_0:=1$. If $\alpha=(\alpha_1,\ldots,\alpha_\ell)$ is a composition then one has $\Delta(\bh_\alpha)=\Delta(\bh_{\alpha_1})\cdots\Delta(\bh_{\alpha_\ell})$ since the coproduct of a Hopf algebra must be an algebra homomorphism. We next provide a more explicit coproduct formula for $\NSym$.

One can decompose a generalized ribbon $\alpha$ into a disjoint union of two generalized ribbons $\beta$ and $\gamma$ in the following way. First fill its boxes with two symbols $\beta$ and $\gamma$ such that each row is weakly increasing from left to right and each column is weakly increasing from top to bottom, where $\beta$ is viewed as less than $\gamma$. Then the boxes filled with $\beta$ form a generalized ribbon denoted by $\beta$, and the boxes filled with $\gamma$ form a generalized ribbon denoted by $\gamma$. We write $\alpha=\beta\sqcup \gamma$ for this decomposition. For example, if $\alpha=(1,3)$ then all decompositions $\alpha=\beta\sqcup\gamma$ are given below.
\vskip5pt\[ \young(\beta\beta\beta,\beta)\qquad
\young(\beta\beta\beta,\gamma)\qquad
\young(\beta\beta\gamma,\beta)\qquad
\young(\beta\beta\gamma,\gamma)\qquad
\young(\beta\gamma\gamma,\beta)\qquad
\young(\beta\gamma\gamma,\gamma)\qquad
\young(\gamma\gamma\gamma,\gamma)
\]\vskip5pt

\begin{theorem}\label{thm:CoprodSchur}
For any generalized ribbon $\alpha$ one has
\[ \Delta\bs_\alpha = \sum_{\alpha=\beta\sqcup\gamma} \bs_{\beta}\otimes\bs_{\gamma} = \sum_{\substack{\alpha=\beta\sqcup\gamma \\ \beta'\in\,[\beta] \\ \gamma'\in\,[\gamma]}} \bs_{\beta'}\otimes\bs_{\gamma'}\]
\end{theorem}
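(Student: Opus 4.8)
The plan is to realize the coproduct of $\NSym$ by splitting the alphabet $\bX$ and then to read off the stated formula from a bijection on semistandard tableaux, exactly parallel to the classical proof that $\Delta s_\lambda=\sum_\nu s_\nu\otimes s_{\lambda/\nu}$ via ordered alphabets. Concretely, I would introduce two disjoint copies $\bX'=\{\bx_i':i\in\ZZ\}$ and $\bX''=\{\bx_i'':i\in\ZZ\}$ of $\bX$, declare every letter of $\bX'$ to be smaller than every letter of $\bX''$, and impose that each $\bx_i'$ commutes with each $\bx_j''$ (while letters within $\bX'$ and within $\bX''$ remain noncommutative). The map $\Psi$ that substitutes the merged alphabet $\bX'<\bX''$ into an element of $\NSym$ and then collects all $\bX'$-letters (in their order of occurrence) into the left tensor factor and all $\bX''$-letters into the right tensor factor is the operation I want to identify with $\Delta$.

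The first task is to check that $\Psi$ really equals $\Delta$. On a generator $\bh_k$ this is immediate: a weakly increasing word in $\bX'<\bX''$ is a weakly increasing $\bX'$-word followed by a weakly increasing $\bX''$-word, so $\Psi(\bh_k)=\sum_{i+j=k}\bh_i\otimes\bh_j=\Delta\bh_k$. To promote this to all of $\NSym$ I would verify that $\Psi$ is an algebra homomorphism: for two words $u,v$ in the merged alphabet, collecting $\bX'$-letters to the left in the concatenation $uv$ agrees with $\Psi(u)\Psi(v)$ precisely because the trailing $\bX''$-letters of $u$ may be commuted past the leading $\bX'$-letters of $v$. Since $\Delta$ is by definition the algebra map extending $\bh_k\mapsto\sum\bh_i\otimes\bh_{k-i}$, this gives $\Psi=\Delta$. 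I expect this step to be the main obstacle, because it is here that one must argue carefully that the rearrangement separating the two sub-alphabets is legitimate on the monomials of $\bs_\alpha$, whose reading words are in general not weakly increasing.

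With $\Delta=\Psi$ in hand, I would apply it to $\bs_\alpha=\sum_\tau\bx_\tau$, the sum now running over semistandard tableaux $\tau$ of shape $\alpha$ filled from $\bX'<\bX''$. For such a $\tau$, the boxes carrying $\bX'$-entries and those carrying $\bX''$-entries partition the diagram; because $\bX'<\bX''$ and $\tau$ is semistandard, the small-entry boxes occupy the upper-left region and the large-entry boxes the lower-right region, which is exactly a decomposition $\alpha=\beta\sqcup\gamma$ in the sense defined before the theorem, with $\beta,\gamma$ generalized ribbons. Restricting the global bottom-to-top, left-to-right reading order of $\tau$ to the $\beta$-boxes (respectively $\gamma$-boxes) yields precisely the reading word of the induced semistandard tableau $\tau|_\beta$ of shape $\beta$ over $\bX'$ (respectively $\tau|_\gamma$ of shape $\gamma$ over $\bX''$), so $\Psi(\bx_\tau)=\bx_{\tau|_\beta}\otimes\bx_{\tau|_\gamma}$. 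Conversely, every pair of semistandard tableaux of shapes $\beta$ and $\gamma$ reassembles into a unique semistandard $\tau$, since at each row- and column-boundary the $\bX'$-entry precedes the $\bX''$-entry. Summing over all $\tau$ and grouping by the decomposition type therefore gives $\Delta\bs_\alpha=\sum_{\alpha=\beta\sqcup\gamma}\bs_\beta\otimes\bs_\gamma$, the first claimed equality.

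Finally, the second equality is immediate from Proposition~\ref{prop:NSym}: expanding each factor via $\bs_\beta=\sum_{\beta'\in[\beta]}\bs_{\beta'}$ and $\bs_\gamma=\sum_{\gamma'\in[\gamma]}\bs_{\gamma'}$ turns $\sum_{\alpha=\beta\sqcup\gamma}\bs_\beta\otimes\bs_\gamma$ into $\sum_{\alpha=\beta\sqcup\gamma,\ \beta'\in[\beta],\ \gamma'\in[\gamma]}\bs_{\beta'}\otimes\bs_{\gamma'}$, completing the proof.
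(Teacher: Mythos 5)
Your proof is correct, but it takes a genuinely different route from the paper's. The paper first reduces to connected $\alpha$ via multiplicativity, then proves the auxiliary identity $\Delta \bh_\alpha = \sum_{\beta\sqcup\gamma\cleq\alpha}\bs_\beta\otimes\bs_\gamma$ by induction on $\ell(\alpha)$, writing $\alpha=\alpha^1\cdot\alpha^2$ and running a six-case analysis on how the cut between $\alpha^1$ and $\alpha^2$ interacts with the $\beta/\gamma$ coloring, and finally applies inclusion--exclusion. You instead realize $\Delta$ by alphabet doubling, $\Delta\mathbf f=\mathbf f(\bX'+\bX'')$ with $\bX'<\bX''$ and cross-commutativity --- which is precisely the alternative description the paper's own remark after the theorem alludes to, citing Grinberg--Reiner \S8.1 --- and then the first equality becomes a transparent bijection: a semistandard tableau over the merged alphabet is the same as a decomposition $\alpha=\beta\sqcup\gamma$ together with a $\bX'$-tableau of shape $\beta$ and a $\bX''$-tableau of shape $\gamma$, with reading words interleaving correctly. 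Your approach buys conceptual clarity and avoids the induction and case analysis entirely; its cost is the foundational work you correctly flag as the main obstacle. That work splits into two pieces: (i) that ``evaluate at $\bX'+\bX''$ then sort the letters'' is an algebra map agreeing with $\Delta$ on the $\bh_k$, hence equal to $\Delta$ since $\NSym$ is free on the $\bh_k$ (this part is routine); and (ii) that the tableau generating-function description of $\bs_\alpha$ remains valid over the doubled, partially commutative alphabet --- i.e., that evaluating the abstract element $\bs_\alpha=\sum_{\beta\cleq\alpha}(-1)^{\ell(\alpha)-\ell(\beta)}\bh_\beta$ at $\bX'+\bX''$ really produces the sum of reading-word monomials of semistandard tableaux over that alphabet. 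Point (ii) is not automatic from the definition of $\bs_\alpha$ over $\bX$ alone, but it does follow because the gluing argument behind Proposition~\ref{prop:NSym} uses only the total order on the alphabet, so \eqref{eq:bhbs} holds over any totally ordered alphabet and M\"obius inversion transfers the tableau description; you should make that observation explicit to close the argument. The final step, expanding each $\bs_\beta\otimes\bs_\gamma$ via Proposition~\ref{prop:NSym}, matches the paper exactly.
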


\begin{proof}
We only need to prove the first equality, which immediately implies the second equality by Proposition~\ref{prop:NSym}.

If $\alpha=\alpha^1\oplus\cdots\oplus\alpha^k$ is a generalized-ribbon with connected components $\alpha^i$, then one has $\Delta(\bs_\alpha)=\Delta(\bs_{\alpha^1})\cdots \Delta(\bs_{\alpha^k})$ by  Proposition~\ref{prop:NSym}. A decomposition $\alpha=\beta\sqcup\gamma$ is equivalent to decompositions $\alpha^i=\beta^i\sqcup\gamma^i$ for all $i\in[k]$, where $\beta^i$ and $\gamma^i$ are the $i$th components of $\beta$ and $\gamma$, respectively. Hence it suffices to prove the first equality assuming $\alpha$ is a composition. To this aim, we prove the following equality by induction on the length of $\alpha$:
\begin{equation}\label{eq:Deltah}
\Delta \bh_\alpha = \sum_{\beta\sqcup\gamma\cleq\alpha} \bs_\beta\otimes\bs_\gamma.
\end{equation}
It is trivial when $\ell(\alpha)=1$. If $\ell(\alpha)>1$ then one can write $\alpha = \alpha^1\cdot\alpha^2$ where $\alpha^1$ and $\alpha^2$ are compositions of smaller lengths. Since $\Delta$ is an algebra map, one has $\Delta \bh_{\alpha} =\Delta \bh_{\alpha^1}\cdot\Delta \bh_{\alpha^2}$. Using induction hypothesis one obtains
\[ \Delta \bh_{\alpha^1}\cdot\Delta \bh_{\alpha^2} = \sum_{ \substack{\beta^1\sqcup \gamma^1 \cleq \alpha^1 \\ \beta^2\sqcup\gamma^2 \cleq \alpha^2 }} \bs_{\beta^1}\bs_{\beta^2} \otimes \bs_{\gamma^1} \bs_{\gamma^2}. \]
We need to show that this is the same as \eqref{eq:Deltah}. Suppose that $\beta\sqcup \gamma \cleq \alpha$. We cut the ribbon $\beta\sqcup \gamma$ between its $n$th and $(n+1)$th boxes in the reading order, where $n=|\alpha^1|$. This gives two ribbons $\beta^1\sqcup\gamma^1 \cleq \alpha^1$ and $\beta^2\sqcup\gamma^2 \cleq\alpha^2$. We distinguish the following cases for the $n$th and $(n+1)$th boxes in $\beta\sqcup\gamma$.
\begin{equation}\label{eq:BetaGamma} 
\begin{ytableau} \beta^1 & \beta^2 \end{ytableau} \qquad
\raisebox{6pt}{\begin{ytableau}  \beta^2 \\ \beta^1 \end{ytableau}} \qquad\qquad
\begin{ytableau}  \gamma^1 & \gamma^2 \end{ytableau} \qquad
\raisebox{6pt}{\begin{ytableau}  \gamma^2 \\ \gamma^1 \end{ytableau}} \qquad\qquad
\begin{ytableau}  \beta^1 & \gamma^2 \end{ytableau} \qquad
\raisebox{6pt}{\begin{ytableau}  \beta^2 \\ \gamma^1 \end{ytableau}}
\end{equation}

In the first two cases one has $\gamma^1$ and $\gamma^2$ disconnected. 
Hence $\bs_\gamma = \bs_{\gamma^1}\bs_{\gamma^2}$. 
Also, either $\beta=\beta^1\cdot\beta^2$ or $\beta = \beta^1\rhd\beta^2$, and $\bs_{\beta^1\cdot\beta^2} + \bs_{\beta^1\rhd\beta^2} = \bs_{\beta^1}\bs_{\beta^2}$. 
Thus 
\[ \sum \bs_\beta\otimes\bs_\gamma =  \sum \bs_{\beta^1}\bs_{\beta^2} \otimes \bs_{\gamma^1}\bs_{\gamma^2} \]
where the first sum runs over all $\beta\sqcup\gamma\cleq\alpha$ belonging to the first two cases of \eqref{eq:BetaGamma}, and the second sum runs over all pairs $\beta^1\sqcup\gamma^1\cleq\alpha^1$ and $\beta^2\sqcup\gamma^2\cleq\alpha^2$ such that the last box of $\beta^1\sqcup\gamma^1$ is occupied by $\beta^1$ and the first box of $\beta^2\sqcup\gamma^2$ is occupied by $\beta^2$.

In the next two cases of \eqref{eq:BetaGamma} one has $\beta^1$ and $\beta^2$ disconnected, and either $\gamma=\gamma^1\cdot\gamma^2$ or $\gamma = \gamma^1\rhd\gamma^2$. Thus
\[
\sum \bs_\beta\otimes\bs_\gamma =  \sum \bs_{\beta^1}\bs_{\beta^2} \otimes \bs_{\gamma^1}\bs_{\gamma^2}
\]
where the first sum runs over all $\beta\sqcup\gamma\cleq\alpha$ belonging to the third and fourth cases of \eqref{eq:BetaGamma}, and the second sum runs over all pairs $\beta^1\sqcup\gamma^1\cleq\alpha^1$ and $\beta^2\sqcup\gamma^2\cleq\alpha^2$ such that the last box of $\beta^1\sqcup\gamma^1$ is occupied by $\gamma^1$ and the first box of $\beta^2\sqcup\gamma^2$ is occupied by $\gamma^2$.

Finally, in the last two cases of \eqref{eq:BetaGamma} one has $\beta^1$ and $\beta^2$  disconnected and $\gamma^1$ and $\gamma^2$ disconnected. Thus
\[
\sum \bs_\beta\otimes\bs_\gamma =  \sum \bs_{\beta^1}\bs_{\beta^2} \otimes \bs_{\gamma^1}\bs_{\gamma^2}
\]
where the first sum runs over all $\beta\sqcup\gamma\cleq\alpha$ belonging to the last two cases of \eqref{eq:BetaGamma}, and the second sum runs over all pairs $\beta^1\sqcup\gamma^1\cleq\alpha^1$ and $\beta^2\sqcup\gamma^2\cleq\alpha^2$ such that the last box of $\beta^1\sqcup\gamma^1$ is occupied by $\beta^1$ [$\gamma^1$ resp.] and the first box of $\beta^2\sqcup\gamma^2$ is occupied by $\gamma^2$ $[\beta^2$ resp.].

Thus \eqref{eq:Deltah} holds. Applying inclusion-exclusion completes the proof.
\end{proof}

\begin{remark}
Theorem~\ref{thm:CoprodSchur} suggests another way to define the coproduct of $\NSym$, that is, define $\Delta \mathbf{f} := \mathbf{f}(\mathbf X+\mathbf Y)$ where $\mathbf X=\{\bx_i:i\in\ZZ\}$ and $\mathbf Y=\{\mathbf y_j:j\in\ZZ\}$ are two sets of noncommutative variables such that $\bx_i<\mathbf y_j$ and $\bx_i\mathbf y_j=\mathbf y_j\bx_i$ for all $i,j$. 
The reader is referred to Grinberg and Reiner~\cite[\S8.1]{GrinbergReiner} for a detailed discussion of the similarly defined coproduct of the Hopf algebra of free quasisymmetric functions, which contains $\NSym$ as a Hopf subalgebra.
\end{remark}


Bergeron and Li~\cite{BergeronLi} showed that the Grothendieck groups $G_0(\H_\bullet(0))$ and $K_0(\H_\bullet(0))$ associated with the tower  $\H_\bullet(0): \H_0(0)\hookrightarrow \H_1(0) \hookrightarrow \H_2(0) \hookrightarrow\cdots$ of 0-Hecke algebras of type A are graded Hopf algebras whose  product and coproduct are defined by
\[ M \,\widehat{\otimes}\, N := (M\otimes N)\uparrow\,_{\H_m(0)\otimes\H_n(0)}^{\H_{m+n}(0)} \qand
\Delta M := \sum_{0\le i\le m} M\downarrow\,_{\H_i(0)\otimes\H_{m-i}(0)}^{\H_m(0)} \]
for all finitely generated (projective) $\H_m(0)$-modules $M$ and $\H_n(0)$-modules $N$. Following Krob and Thibon~\cite{KrobThibon}, we define the \emph{noncommutative characteristic} and the \emph{quasisymmetric characteristic} as 
\[ \begin{matrix}
\mathbf{ch}: & K_0(\H_\bullet(0)) & \to & \NSym & \text{and} & \mathrm{Ch}: & G_0(\H_\bullet(0)) & \to & \QSym \\
& \bP_\alpha & \mapsto & \bs_\alpha & &  & \bC_\alpha & \mapsto & F_\alpha  
\end{matrix} \]
where $\alpha$ runs through all compositions. For $G_0(\H_\bullet(0))$, a product formula is given in~\cite{NCSFIII,QSymNSymH0} and a coproduct formula is provided in~\cite{KrobThibon}, showing that the quasisymmetric characteristic Ch is a Hopf algebra isomorphism. For $K_0(\H_\bullet(0))$, a product formula is included in~Theorem~\ref{thm:IndP} and a coproduct formula is obtained below. Comparing these results with Proposition~\ref{prop:NSym} and Theorem~\ref{thm:CoprodSchur} one sees that the noncommutative characteristic $\mathbf{ch}$ is also a Hopf algebra isomorphism sending $\bP_\alpha$ to $\bs_\alpha$ for any generalized ribbon $\alpha$.

\begin{proposition}\label{prop:ResP}
If $\alpha$ is a generalized ribbon of size $m+n$ then
\[ \bP_\alpha\downarrow\,_{\H_{m,n}(0)}^{\H_{m+n}(0)}\ \cong 
\bigoplus_{\substack{\alpha=\beta\sqcup\gamma \\ |\beta|=m \\ |\gamma|=n} } \bP_\beta\otimes \bP_\gamma
\cong \bigoplus_{\substack{\alpha=\beta\sqcup\gamma \\ |\beta|=m,\ \beta'\in[\beta] \\ |\gamma|=n,\ \gamma'\in[\gamma]} } \bP_{\beta'}\otimes \bP_{\gamma'}.\]
\end{proposition}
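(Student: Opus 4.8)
The plan is to read the restriction off directly from the combinatorial action \eqref{eq:RibbonAction}, exploiting the fact that the parabolic subalgebra $\H_{m,n}(0)$ is generated by the $\pib_i$ with $i\in[m-1]\cup\{m+1,\ldots,m+n-1\}$, so that $\pib_m$---the only generator that could mix the two halves of the tableau---is absent. This makes the proof a module-theoretic lift of the coproduct formula in Theorem~\ref{thm:CoprodSchur}.

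First I would attach to each standard tableau $\tau$ of shape $\alpha$ a decomposition $\alpha=\beta\sqcup\gamma$ by declaring a box to be a $\beta$-box if its entry lies in $\{1,\ldots,m\}$ and a $\gamma$-box otherwise. Since $\tau$ has strictly increasing rows and columns, the threshold $m$ separates the small and large entries in exactly the monotone fashion required by the definition of $\sqcup$ in Theorem~\ref{thm:CoprodSchur}: in each row every $\beta$-box precedes every $\gamma$-box, and in each column every $\beta$-box lies above every $\gamma$-box. Hence $\beta$ and $\gamma$ are generalized ribbons with $|\beta|=m$ and $|\gamma|=n$; restricting $\tau$ to its small entries gives a standard tableau of shape $\beta$, and restricting to its large entries and subtracting $m$ gives a standard tableau of shape $\gamma$. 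This assignment is a bijection between standard tableaux of shape $\alpha$ and triples consisting of a decomposition $\alpha=\beta\sqcup\gamma$ with $|\beta|=m$, a standard tableau of $\beta$, and a standard tableau of $\gamma$; the monotonicity conditions guarantee that the reverse construction (filling the $\beta$-boxes and the shifted $\gamma$-boxes) always yields a standard tableau of $\alpha$.

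Next I would check that the decomposition type $(\beta,\gamma)$ is invariant under the $\H_{m,n}(0)$-action. Every available generator is a $\pib_i$ with $i,i+1$ both in $\{1,\ldots,m\}$ or both in $\{m+1,\ldots,m+n\}$; by \eqref{eq:RibbonAction} it sends $\tau$ to $-\tau$, to $0$, or to $s_i(\tau)$, and in each case the set of $\beta$-boxes is unchanged. Therefore $\bP_\alpha\downarrow\,_{\H_{m,n}(0)}^{\H_{m+n}(0)}$ splits as the direct sum, over all decompositions $\alpha=\beta\sqcup\gamma$ with $|\beta|=m$ and $|\gamma|=n$, of the submodule $V_{\beta,\gamma}$ spanned by the standard tableaux of that type. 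Identifying $V_{\beta,\gamma}$ with $\bP_\beta\otimes\bP_\gamma$ is then immediate from the bijection: the $\pib_i$ with $i<m$ act only on the small entries, the $\pib_i$ with $i>m$ only on the large entries, and since \eqref{eq:RibbonAction} depends solely on the relative vertical position of $i$ and $i+1$, these actions coincide with the $\H_m(0)$-action on $\bP_\beta$ and the $\H_n(0)$-action on $\bP_\gamma$. This yields the first isomorphism, and the second follows by applying Theorem~\ref{thm:IndP}(ii) to expand $\bP_\beta\cong\bigoplus_{\beta'\in[\beta]}\bP_{\beta'}$ and $\bP_\gamma\cong\bigoplus_{\gamma'\in[\gamma]}\bP_{\gamma'}$.

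The step I expect to require the most care is the final identification of actions: one must confirm that two boxes lying in the same row (respectively, in a higher or lower row) of $\alpha$ remain in the same row (respectively, higher or lower row) once $\beta$ is detached as its own generalized ribbon---even though its rows may be reindexed and its connected components pulled apart---and that $s_i(\tau)$ stays standard when applied within a single block. Because $\pib_m$ never enters, there is no interaction across the cut, so this is really a bookkeeping verification rather than a genuine obstacle; but it is precisely the point at which the hypothesis that $\alpha$ is a \emph{generalized ribbon} is used, since it is this hypothesis that endows each block $\beta,\gamma$ with a well-defined inherited row order.
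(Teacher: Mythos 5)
Your proposal is correct and follows essentially the same route as the paper's proof: split each standard tableau of shape $\alpha$ at the threshold $m$ into standard tableaux of shapes $\beta$ and $\gamma$ with $\alpha=\beta\sqcup\gamma$, observe that this correspondence is bijective and compatible with the $\H_{m,n}(0)$-action (since $\pib_m$ is excluded), and then invoke Theorem~\ref{thm:IndP} for the second isomorphism. Your write-up simply makes explicit the verifications that the paper leaves implicit.
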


\begin{proof}
Let $\tau$ be a standard tableau of shape $\alpha$. The entries $1,\ldots,m$ in $\tau$ form a standard tableau $\tau_{\le m}$ of shape $\beta$. Subtracting $m$ from the entries $m+1,\ldots,m+n$ in $\tau$ gives a standard tableau $\tau_{>m}$ of shape $\gamma$. One has $\alpha=\beta\sqcup\gamma$, $|\beta|=m$, and $|\gamma|=n$. Conversely, if a composition $\alpha$ can be written as $\alpha=\beta \sqcup\gamma$ for a pair of generalized ribbons $\beta$ and $\gamma$ of sizes $m$ and $n$, then every pair $(\tau,\tau')$ of standard tableaux of shapes $\beta$ and $\gamma$ can be glued together to form a standard tableau of shape $\alpha$, with a value of $m$ added to every entry in $\tau'$. Moreover, the $\H_{m,n}(0)$-action is preserved by this correspondence. Hence one has the first desired isomorphism of $\H_{m,n}(0)$-modules. The second desired isomorphism then follows from Theorem~\ref{thm:IndP}.
\end{proof}

\subsection{Type B}
Let $x_0,x_1,x_2,\ldots$ be commutative variables. Chow~\cite{Chow} introduced a type B analogue $\QSym^B$ for $\QSym$ with two bases consisting of $\MB_\alpha$ and $\FB_\alpha$, respectively, for all pseudo-compositions $\alpha$. If $\alpha=(\alpha_1,\ldots,\alpha_\ell)$ is a pseudo-composition of $n$ then the \emph{type B monomial quasisymmetric function} $M^B_\alpha$ and the \emph{type B fundamental quasisymmetric function} $\FB_\alpha$ are defined as
\[ \MB_\alpha := \sum_{0<i_2<\cdots<i_\ell} x_0^{\alpha_1}x_{i_2}^{\alpha_2}\cdots x_{i_\ell}^{\alpha_\ell} \qand 
\FB_\alpha := \sum_{ \substack{ 0\le i_1\le \cdots\le i_n\\ j\in D(\alpha) \Rightarrow i_j<i_{j+1} }} x_{i_1}\cdots x_{i_n}. \]
One sees that $\FB_\alpha=\sum_{\alpha\cleq\beta}\MB_\beta$ is the generating function of all fillings of the pseudo-ribbon $\alpha$ with nonnegative integers such that each row is weakly increasing from left to right and each column is strictly decreasing from top to bottom, \emph{including the extra 0-box}.  

Chow~\cite{Chow} showed that $\QSym^B$ is both an algebra and a coalgebra (but not a Hopf algebra), and is also a right graded $\QSym$-comodule defined by replacing the variables $x_0,x_1,x_2,\ldots$ with $x_0,x_1,x_2,\ldots,y_1,y_2,\ldots$. 
The $\QSym$-comodule structure on $\QSym^B$ is determined by the following formulas
\[ \FB_\alpha \mapsto \sum_{0\le i\le n} \FB_{\alpha_{\le i}}\otimes F_{\alpha_{>i}} \qand 
\MB_\alpha \mapsto \sum_{0\le i\le\ell} \MB_{(\alpha_1,\ldots,\alpha_i)}\otimes M_{(\alpha_{i+1},\ldots,\alpha_\ell)} \]
for all pseudo-compositions $\alpha=(\alpha_1,\ldots,\alpha_\ell)$ of $n$, where $\alpha_{\le i}$ and $\alpha_{>i}$ are obtained by cutting the pseudo-ribbon $\alpha$ between its $i$th and $(i+1)$th boxes in the reading order.

Chow~\cite{Chow} also introduced a right graded $\NSym$-module $\NSym^B$ dual to the $\QSym$-comodule $\QSym^B$. We next use tableaux of pseudo-ribbon shapes to study $\NSym^B$ and realize its elements as formal power series.

Let $\alpha$ be a generalized pseudo-ribbon. A \emph{(type B) semistandard tableau of shape $\alpha$} is a filling of $\alpha$ with integers such that every row is weakly increasing from left to right and every column is strictly increasing from top bottom, \emph{including the extra 0-box}. Reading these integers in $\tau$, \emph{excluding the extra $0$-box}, from left to right on each row and proceeding from the bottom row toward the top row, gives the \emph{reading word} $w(\tau)$ of $\tau$. Some examples are given below.
\vskip5pt\[ \begin{ytableau} 
\none & \none & \none & \none & \bar2 \\
\none & \none & \none & \none & 0 \\
 \none & \none & \bar 2 & 1 & 2 \\
 {\color{red}0} & 2 & 2 
\end{ytableau}
 \qquad \qquad \qquad
\begin{ytableau} 
\none & \none & \none & \bar3 \\
\none & \none & \none & 0 \\
\none & \bar1 & \bar1 & 3 \\
 \bar2 & 3 \\
{\color{red}0} 
\end{ytableau}  \]\vskip5pt

Let $\bX:=\{\bx_i:i\in\mathbb Z\}$ be a set of noncommutative variables. Given a generalized pseudo-ribbon $\alpha$ we define $\bsB_\alpha$ to be the sum of $\bx_\tau$ for all semistandard tableaux $\tau$ of shape $\alpha$, where $\bx_\tau:=\bx_{w_1}\cdots\bx_{w_n}$ if $w(\tau)=w_1\cdots w_n$.

Let $\alpha=\alpha^1\oplus\cdots\oplus\alpha^k$ be a generalized pseudo-ribbon.
Let $\beta=\beta^1\oplus\cdots\beta^\ell$ be a generalized ribbon. Write
\[ \alpha \cdot \beta = \alpha^1\oplus\cdots\oplus(\alpha^k\cdot\beta^1) \oplus\cdots\oplus\beta^\ell \qand
\alpha \rhd \beta = \alpha^1\oplus\cdots\oplus(\alpha^k\rhd\beta^1) \oplus\cdots\oplus\beta^\ell. \]
Suppose that $\tau$ and $\eta$ are semistandard tableaux of shape $\alpha$ and $\beta$, respectively. There is a uniquely way to glue $\tau$ and $\eta$ to get a type B semistandard tableau $\tau*\eta$ of shape either $\alpha\cdot\beta$ or $\alpha\rhd\beta$, depending on whether the last entry of $w(\tau)$ is strictly larger than the first entry of $w(\eta)$. One sees that $w(\tau*\eta)$ equals the concatenation of $w(\tau)$ and $w(\eta)$.

\begin{proposition}\label{prop:NSymB}
Let $\alpha=\alpha^1\oplus\cdots\oplus\alpha^k$ be a generalized pseudo-ribbon.
Let $\beta$ be a generalized ribbon. Then
\[ \bsB_{\alpha}\cdot\bs_\beta = \bsB_{\alpha\cdot\beta} + \bsB_{\alpha\rhd\beta} \qand 
\bsB_\alpha = \bsB_{\alpha^1} \cdot \bs_{\alpha^2} \cdots \bs_{\alpha^k} = \sum_{\gamma\in[\alpha]} \bsB_\gamma.\]
\end{proposition}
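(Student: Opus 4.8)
The plan is to follow the proof of Proposition~\ref{prop:NSym} essentially verbatim, using the gluing operation $\tau*\eta$ that was set up immediately before the statement.

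For the first identity, I would expand $\bsB_\alpha\cdot\bs_\beta$ as $\sum_{\tau,\eta}\bx_\tau\bx_\eta$, where $\tau$ runs over type B semistandard tableaux of shape $\alpha$ and $\eta$ over semistandard tableaux of shape $\beta$. Since $w(\tau*\eta)$ is the concatenation of $w(\tau)$ and $w(\eta)$, one has $\bx_\tau\bx_\eta=\bx_{\tau*\eta}$. The crux is that $(\tau,\eta)\mapsto\tau*\eta$ is a bijection from these pairs onto the disjoint union of the type B semistandard tableaux of shape $\alpha\cdot\beta$ with those of shape $\alpha\rhd\beta$, the two cases being separated by whether the last entry of $w(\tau)$ exceeds the first entry of $w(\eta)$ (a strict inequality forces the column-adjacent shape $\alpha\cdot\beta$, and the opposite the row-adjacent shape $\alpha\rhd\beta$). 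Summing $\bx_{\tau*\eta}$ over the image then gives $\bsB_{\alpha\cdot\beta}+\bsB_{\alpha\rhd\beta}$.

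For the second line, I would first note that $\bsB_\alpha=\bsB_{\alpha^1}\cdot\bs_{\alpha^2}\cdots\bs_{\alpha^k}$ holds directly by definition: because the components $\alpha^1,\ldots,\alpha^k$ are pairwise disconnected, a type B semistandard tableau of shape $\alpha$ is an independent choice of a type B semistandard tableau on $\alpha^1$ (carrying the extra $0$-box) together with ordinary semistandard tableaux on the ribbons $\alpha^2,\ldots,\alpha^k$, and its reading word is the concatenation of the component reading words taken in the order $\alpha^1,\ldots,\alpha^k$. I would then prove $\bsB_{\alpha^1}\cdot\bs_{\alpha^2}\cdots\bs_{\alpha^k}=\sum_{\gamma\in[\alpha]}\bsB_\gamma$ by induction on $k$. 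The base case $k=1$ is immediate since $[\alpha^1]=\{\alpha^1\}$. For the inductive step, setting $\alpha'=\alpha^1\oplus\cdots\oplus\alpha^{k-1}$ and applying the induction hypothesis followed by the first identity with $\beta=\alpha^k$ yields $\left(\sum_{\gamma\in[\alpha']}\bsB_\gamma\right)\bs_{\alpha^k}=\sum_{\gamma\in[\alpha']}(\bsB_{\gamma\cdot\alpha^k}+\bsB_{\gamma\rhd\alpha^k})$, which is precisely $\sum_{\delta\in[\alpha]}\bsB_\delta$ by the recursive definition of $[\alpha]$.

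The main obstacle---and it is only a mild one, since the argument is formally identical to the type A case---is to check that the extra $0$-box never disrupts the gluing bijection. This holds because the $0$-box sits in the bottommost, leftmost component $\alpha^1$, whereas every gluing with $\beta$ occurs at the northeast end, joining $\alpha^k$ to the first component of $\beta$; the $0$-box is therefore untouched, so $\alpha\cdot\beta$ and $\alpha\rhd\beta$ remain generalized pseudo-ribbons whose distinguished pseudo-ribbon component is still $\alpha^1$, and the column- and row-adjacency conditions at the junction are exactly those of the type A argument.
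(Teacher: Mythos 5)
Your proposal is correct and follows essentially the same route as the paper: the paper's proof consists precisely of invoking the gluing bijection $(\tau,\eta)\mapsto\tau*\eta$ set up just before the statement for the first identity, and induction on $k$ for the second. Your added observations---that $\bsB_\alpha=\bsB_{\alpha^1}\cdot\bs_{\alpha^2}\cdots\bs_{\alpha^k}$ holds by component independence of the tableau data, and that the extra $0$-box is never involved in the gluing since it lies in $\alpha^1$ while the junction is at the northeast end---are correct and merely make explicit what the paper leaves implicit.
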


\begin{proof}
The above argument on $\tau*\eta$ shows the first equality. By induction on $k$ one proves the second equality.
\end{proof}

We define $\NSym^B:=\bigoplus_{n\ge0}\NSym^B_n$ where $\NSym^B_n$ is the $\ZZ$-span of the $\bsB_\alpha$ for all $\alpha\modelsB n$. 
For each $a\in\ZZ^n$ there exists a unique semistandard tableau of pseudo-ribbon shape whose reading word is $a$. This implies that $\{\bsB_\alpha:\alpha\modelsB n, n\ge0\}$ is linearly independent and hence a basis for $\NSym^B$. 

If $\alpha=(\alpha_1,\ldots,\alpha_\ell)$ is a pseudo-composition then define $\bhB_\alpha:=\bsB_{\alpha_1\oplus\cdots\oplus\alpha_\ell}$. Proposition~\ref{prop:NSymB} implies that 
\[ \bhB_\alpha=\bhB_{\alpha_1}\cdot \bh_{\alpha_2}\cdots\bh_{\alpha_\ell} = \sum_{\beta\cleq \alpha}\bsB_\beta\]
Hence $\NSym^B$ admits another basis $\{\bhB_\alpha:\alpha\modelsB n, n\ge0\}$ and becomes a free right $\NSym$-module with a basis $\{\bhB_{k}:k\ge0\}$. This module structure was previously studied  by Chow~\cite{Chow} but not realized as formal power series. Note that if $\alpha$ is a composition then $\bh_\alpha=\bhB_{0\cdot\alpha}$ and $\bs_\alpha=\bsB_{\alpha}+\bsB_{0\cdot\alpha}$ (cf. Chow~\cite{Chow}).

Let $\HB_\bullet(0): \HB_0(0)\hookrightarrow \HB_1(0) \hookrightarrow \HB_2(0) \hookrightarrow\cdots$ be the tower of 0-Hecke algebras of type B. We define a \emph{type B noncommutative characteristic}
\[ \begin{matrix}
\mathrm{Ch}^B: & G_0(\HB_\bullet(0)) & \xrightarrow\sim & \QSym^B \\
& \bCB_\alpha & \mapsto & \FB_\alpha  
\end{matrix}\]
and a \emph{type B quasisymmetric characteristic} 
\[\begin{matrix}
 \mathbf{ch}^B: & K_0(\HB_\bullet(0)) & \xrightarrow\sim & \NSym^B \\
&\bPB_\alpha & \mapsto & \bsB_\alpha
\end{matrix} \]
where $\alpha$ runs through all pseudo-compositions. Since there is an embedding $\H^B_m(0)\otimes\H_n(0) \cong \HB_{m,n}(0)\subseteq \HB_{m+n}(0)$, we define a right action of $K_0(\H_\bullet(0))$ on $K_0(\HB_\bullet(0))$ and  a right coaction of $G_0(\H_\bullet(0))$ on $G_0(\HB_\bullet(0))$ by
\[ M \,\widehat{\otimes}\, N := (M\otimes N)\uparrow\,_{\HB_{m,n}(0)}^{\HB_{m+n}(0)} \qand
\Delta Q := \sum_{0\le i\le n} Q\downarrow\,_{\HB_{i,n-i}(0)}^{\HB_{n}(0)} \]
for all $M\in K_0(\HB_m(0))$, $N\in K_0(\H_n(0))$, and $Q\in G_0(\HB_n(0))$. 

\begin{proposition}
(i) $K_0(\HB_\bullet(0))$ is a right graded $K_0(\H_\bullet(0))$-module isomorphic to the right graded $\NSym$-module $\NSym^B$ via the noncommutative characteristic maps $\mathbf{ch}^B$ and $\mathbf{ch}$.

\noindent (ii) $G_0(\HB_\bullet(0))$ is a right graded $G_0(\H_\bullet(0))$-comodule isomorphic to the right graded $\QSym$-comodule $\QSym^B$ via the quasisymmetric characteristic maps $\mathrm{Ch}^B$ and $\mathrm{Ch}$.

\noindent(iii) The graded module and comodule structures in (i) and (ii) are dual.
\end{proposition}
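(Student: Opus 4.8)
The plan is to handle the three parts in turn. Both $\mathbf{ch}^B$ and $\mathrm{Ch}^B$ are, by construction, bijective graded $\ZZ$-linear maps sending a distinguished basis to a distinguished basis, so in each case the only content is that they intertwine the relevant module, comodule, and pairing structures, which I will check on bases. For (i) I first record that the action $M\,\widehat{\otimes}\,N=(M\otimes N)\uparrow\,_{\HB_{m,n}(0)}^{\HB_{m+n}(0)}$ is well defined on $K_0$ (induction along the parabolic embedding carries projectives to projectives since $\HB_{m+n}(0)$ is free over $\HB_{m,n}(0)$, and is additive on direct sums) and is associative by transitivity of induction together with associativity of the embeddings $\HB_a(0)\otimes\H_b(0)\otimes\H_c(0)\hookrightarrow\HB_{a+b+c}(0)$. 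It then suffices to verify $\mathbf{ch}^B(\bPB_\alpha\,\widehat{\otimes}\,\bP_\beta)=\bsB_\alpha\cdot\bs_\beta$ for $\alpha\modelsB m$ and $\beta\models n$. Applying Theorem~\ref{thm:IndPB}(ii) to the generalized pseudo-ribbon $\alpha\oplus\beta$ gives
\[ (\bPB_\alpha\otimes\bP_\beta)\uparrow\,_{\HB_{m,n}(0)}^{\HB_{m+n}(0)}\cong\bPB_{\alpha\oplus\beta}\cong\bigoplus_{\gamma\in[\alpha\oplus\beta]}\bPB_\gamma=\bPB_{\alpha\cdot\beta}\oplus\bPB_{\alpha\rhd\beta}, \]
since $[\alpha\oplus\beta]=\{\alpha\cdot\beta,\alpha\rhd\beta\}$. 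Passing to $K_0$ and applying $\mathbf{ch}^B$, the first identity of Proposition~\ref{prop:NSymB} yields $\bsB_{\alpha\cdot\beta}+\bsB_{\alpha\rhd\beta}=\bsB_\alpha\cdot\bs_\beta$, as required.

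For (ii) the dual argument runs through restriction of the one-dimensional simple modules, which form a basis of $G_0(\HB_\bullet(0))$, so it is enough to compute $\Delta$ on each $\bCB_\alpha$. Recall that $\bCB_\alpha$ is one-dimensional with $\pib_j$ acting by $-1$ for $j\in D(\alpha)$ and by $0$ otherwise. Restriction to $\HB_{i,n-i}(0)=\HB_i(0)\otimes\H_{n-i}(0)$ forgets the generator $\pib_i$ and keeps the remaining $\pib_j$ acting by the same scalars, so it produces the one-dimensional module $\bCB_{\alpha_{\le i}}\otimes\bC_{\alpha_{>i}}$, where $\alpha_{\le i}$ and $\alpha_{>i}$ arise from cutting the pseudo-ribbon $\alpha$ between its $i$th and $(i+1)$th boxes (their descent sets being $D(\alpha)\cap\{0,\ldots,i-1\}$ and the shift of $D(\alpha)\cap\{i+1,\ldots,n-1\}$). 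Summing over $i$ gives $\Delta\bCB_\alpha=\sum_{0\le i\le n}\bCB_{\alpha_{\le i}}\otimes\bC_{\alpha_{>i}}$, and applying $\mathrm{Ch}^B\otimes\mathrm{Ch}$ matches Chow's coaction $\FB_\alpha\mapsto\sum_{0\le i\le n}\FB_{\alpha_{\le i}}\otimes F_{\alpha_{>i}}$ recalled earlier. Exactness of restriction shows the coaction is well defined on $G_0$, and coassociativity follows from transitivity of restriction.

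For (iii) the duality is exactly the instance of the induction--restriction adjunction for which Frobenius reciprocity is available: the $K_0$-module is built from induction and the $G_0$-comodule from restriction. For $M\in K_0(\HB_m(0))$, $N\in K_0(\H_n(0))$, and $Q\in G_0(\HB_{m+n}(0))$, Frobenius reciprocity for $0$-Hecke algebras gives
\[ \Hom_{\HB_{m+n}(0)}\bigl((M\otimes N)\uparrow\,_{\HB_{m,n}(0)}^{\HB_{m+n}(0)},\,Q\bigr)\cong\Hom_{\HB_{m,n}(0)}\bigl(M\otimes N,\,Q\downarrow\,_{\HB_{m,n}(0)}^{\HB_{m+n}(0)}\bigr), \]
and the factorization $\Hom_{\HB_m(0)\otimes\H_n(0)}(M\otimes N,\,Q'\otimes Q'')\cong\Hom_{\HB_m(0)}(M,Q')\otimes_\FF\Hom_{\H_n(0)}(N,Q'')$ over a tensor product of algebras splits the right-hand side. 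Since $Q\downarrow\,_{\HB_{m,n}(0)}^{\HB_{m+n}(0)}$ is precisely the summand of $\Delta Q$ of bidegree $(m,n)$, taking $\FF$-dimensions yields $\langle M\,\widehat{\otimes}\,N,\,Q\rangle=\langle M\otimes N,\,\Delta Q\rangle$, the asserted duality; in particular $\{\bsB_\alpha\}$ and $\{\FB_\alpha\}$ become dual bases, recovering Chow's pairing.

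I expect the main obstacle to be part (iii), specifically keeping track of which adjunction Frobenius reciprocity supplies. It is essential that here one pairs the $K_0$-module (induction) against the $G_0$-comodule (restriction); the \emph{opposite} pairing---a $G_0$-module against a $K_0$-comodule---is exactly the configuration flagged in the introduction as lying outside Frobenius reciprocity and deferred to \cite{ChH0}, so I must resist conflating the two. The remaining delicate point is bookkeeping in (ii): matching the splitting of $D(\alpha)$ under restriction with Chow's convention for cutting the pseudo-ribbon in the reading order.
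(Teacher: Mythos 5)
Your proposal is correct, and for parts (i) and (iii) it is exactly the paper's argument with the details filled in: (i) is the comparison of Theorem~\ref{thm:IndPB}(ii) applied to $\alpha\oplus\beta$ (giving $\bPB_\alpha\,\widehat\otimes\,\bP_\beta\cong\bPB_{\alpha\cdot\beta}\oplus\bPB_{\alpha\rhd\beta}$) with the product identity of Proposition~\ref{prop:NSymB}, and (iii) is the Frobenius reciprocity computation $\langle M\,\widehat\otimes\,N,Q\rangle=\langle M\otimes N,\Delta Q\rangle$, which you correctly note is available only in this direction (induction on $K_0$ paired against restriction on $G_0$) and not in the opposite configuration deferred to \cite{ChH0}. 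The one place you genuinely diverge is (ii): the paper's proof reads ``The Frobenius reciprocity implies (ii) and (iii),'' i.e.\ it deduces the comodule isomorphism from (i) together with the duality of (iii) and Chow's duality between the $\NSym$-module $\NSym^B$ and the $\QSym$-comodule $\QSym^B$, whereas you verify (ii) directly by restricting each one-dimensional simple $\bCB_\alpha$ to $\HB_{i,n-i}(0)$ and matching the resulting descent-set splitting $D(\alpha)\cap\{0,\ldots,i-1\}$ and $\{j-i: j\in D(\alpha),\, j>i\}$ with the cut $\alpha_{\le i}$, $\alpha_{>i}$ of the pseudo-ribbon. Your route is more self-contained (it does not lean on the dual-bases bookkeeping between $\{\bsB_\alpha,\bhB_\alpha\}$ and $\{\FB_\alpha,\MB_\alpha\}$) and has the merit of exhibiting the coaction explicitly on the simples; the paper's route is shorter once the duality is in hand. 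Both are valid, and your direct computation in (ii) is carried out correctly.
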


\begin{proof}
Theorem~\ref{thm:IndPB} and Proposition~\ref{prop:NSymB} imply (i). The Frobenius reciprocity implies (ii) and (iii).
\end{proof}

\begin{remark}
One can define an action of $G_0(\H_\bullet(0))$ on $G_0(\HB_\bullet(0))$ and a coaction of $K_0(\H_\bullet(0))$ on $K_0(\HB_\bullet(0))$ in a similar way. It is relatively easy to obtain a formula for the former, but not for the latter---for example, it is not easy to extend Proposition~\ref{prop:ResP} to type B due to the extra 0-box in standard tableaux of pseudo-ribbion shapes. Our tableau approach does not suggest a natural way to define a $\QSym$-action on $\QSym^B$ and a $\NSym$-coaction on $\NSym^B$ such that the characteristic maps $\mathbf{ch}^B$ and $\mathrm{Ch}^B$ preserve these module and comodule structures. Moreover, it is not clear whether these module and comodule structures are dual to each other, as the Frobenius reciprocity does not apply to them. We will solve these problems using a different approach in another paper~\cite{ChH0}.
\end{remark}

\subsection{Type D}
Our results in type D are similar to type B. Let $X=\{x_i:i\in\mathbb Z\}$ be a set of commutative variables. For every pseudo-composition $\alpha$ of $n\ge2$, define 
\[ \MD_\alpha := \sum_{\substack{ i_0\leq i_1\leq\cdots\leq i_n \\ j\in D(\alpha)\Leftrightarrow i_j<i_{j+1} }} x_{i_1} \cdots x_{i_n} \qand
\FD_\alpha := \sum_{ \substack{ i_0\leq i_1\leq\cdots\leq i_n \\ j\in D(\alpha)\Rightarrow i_j<i_{j+1} }} x_{i_1} \cdots x_{i_n}. \]
Here $i_0:=-i_2$. 
One sees that $\FD_\alpha=\sum_{\alpha\cleq\beta}\MD_\beta$ is the generating function for all fillings of the pseudo-ribbon $\alpha$ with integers such that each row is weakly increasing from left to right and each column is strictly decreasing from top to bottom, \emph{with the extra $0$ interpreted as $-w(2)$}, where $w$ is the reading word of this filling. For each $a\in \ZZ^n$ there exists at most one such filling of the pseudo-ribbon $\alpha$ such that the reading word is a rearrangement of $a$. Hence $\{\FD_\alpha:\alpha\modelsD n,\ n\ge2\}$ is linearly independent.

We define $\QSym^D_n$ to be the $\ZZ$-span of $\{\FD_\alpha:\alpha\modelsD n\}$ and define $\QSym^D:=\bigoplus_{n\geq2}\QSym^D_n$. Then $\{\FD_\alpha:\alpha\modelsD n,\ n\ge2\}$ is a basis for $\QSym^D$. Another basis for $\QSym^D$ consists of $\MD_\alpha$ for all $\alpha\modelsD n\ge2$.

Let $X+Y=\{\ldots,x_{-2},x_{-1},x_0,x_1,x_2,\ldots,y_1,y_2,\ldots\}$ be a totally ordered set of commutative variables. One has a canonical projection
\[ \FF[X+Y]\cong\FF[X]\otimes\FF[Y]\twoheadrightarrow \FF[X]_{\ge2}\otimes\FF[Y]\]
where $\FF[X]_{\ge2}$ is the span of polynomials in X with degree at least 2. This includes a right $\QSym$-coaction on $\QSym^D$:
\[ \QSym^D(X)\mapsto \QSym^D(X+Y)\twoheadrightarrow \QSym^D(X)\otimes\QSym(Y). \]
If $\alpha=(\alpha_1,\ldots,\alpha_\ell)$ is a pseudo-composition of $n\ge2$ and $k$ is the smallest positive integer such that $\alpha_1+\cdots+\alpha_k\ge2$ then this coaction satisfies
\[ \FD_\alpha \mapsto \sum_{2\le i\le n} \FD_{\alpha{\le i}}\otimes F_{\alpha_{>i}} \qand 
\MD_\alpha \mapsto \sum_{k\le i\le\ell} \MD_{(\alpha_1,\ldots,\alpha_i)}\otimes M_{(\alpha_{i+1},\ldots,\alpha_\ell)}.\]

We next define a type D analogue of $\NSym$. Let $\alpha$ be a generalized pseudo-ribbon of size $n\ge2$. A \emph{type D semistandard tableau} $\tau$ of shape $\alpha$ is a filling of $\alpha$ with integers such that each row is weakly increasing from left to right and each column is strictly increasing from top to bottom, \emph{with the extra 0 interpreted as $-w(2)$}, where $w=w(\tau)$ is the reading word of $\tau$ obtained similarly as in type B. For example, two type D semistandard tableaux are illustrated below, whose reading words are $\bar22\bar21	202$ and $\bar21\bar1\bar10\bar2\bar3$, respectively.
\vskip5pt\[ \begin{ytableau} 
\none & \none & \none & \none & \bar2 \\
\none & \none & \none & \none & 0 \\
 \none & \none & \bar 2 & 1 & 2 \\
 {\color{red}0} & \bar 2 & 2 
\end{ytableau}
 \qquad \qquad \qquad
\begin{ytableau} 
\none & \none & \none & \bar3 \\
\none & \none & \none & \bar2 \\
\none & \bar1 & \bar1 & 0 \\
 \bar2 & 1 \\
{\color{red}0} 
\end{ytableau}  \]\vskip5pt
We define $\bsD_\alpha$ to be the sum of $\bx_\tau$ for all type D semistandard tableaux $\tau$ of shape $\alpha$, where $\bx_\tau:=\bx_{w_1}\cdots\bx_{w_n}$ if the reading word of $\tau$ is $w(\tau)=w_1\cdots w_n$.

\begin{proposition}\label{prop:NSymD}
Let $\alpha$ be a generalized pseudo-ribbon of size $n\ge2$. Then
\[ \bsD_\alpha = \sum_{\gamma\in[\alpha]} \bsD_\gamma.\]
If in addition $\alpha=\alpha^1\oplus\cdots\oplus\alpha^k$ and $|\alpha^1|\ge2$ then 
\[\bsD_\alpha = \bsD_{\alpha^1} \cdot \bs_{\alpha^2} \cdots \bs_{\alpha^k}.\]
If $\beta$ is a generalized ribbon then
\[ \bsD_{\alpha}\cdot\bs_\beta = \bsD_{\alpha\cdot\beta} + \bsD_{\alpha\rhd\beta}.\]
\end{proposition}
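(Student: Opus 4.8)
These three identities are the type~D counterparts of Propositions~\ref{prop:NSym} and~\ref{prop:NSymB}, and my plan is to establish them using the tableau-gluing map $\tau*\eta$ as the single engine: first the product formula $\bsD_\alpha\cdot\bs_\beta=\bsD_{\alpha\cdot\beta}+\bsD_{\alpha\rhd\beta}$, and then both the expansion $\bsD_\alpha=\sum_{\gamma\in[\alpha]}\bsD_\gamma$ and the factorization $\bsD_\alpha=\bsD_{\alpha^1}\cdot\bs_{\alpha^2}\cdots\bs_{\alpha^k}$ by induction from it. The one genuinely new feature compared with type~B is that the extra entry is not a literal $0$ but is interpreted as $-w(2)$, so at each step I must check how this reading-word-dependent value behaves under gluing and splitting.

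For the product formula I would argue exactly as in Proposition~\ref{prop:NSymB}. Given a type~D semistandard tableau $\tau$ of shape $\alpha$ and a type~A semistandard tableau $\eta$ of shape $\beta$, attach the first connected component of $\beta$ to the last connected component of $\alpha$, gluing via $\cdot$ when the last entry of $w(\tau)$ exceeds the first entry of $w(\eta)$ and via $\rhd$ otherwise; exactly one choice produces a valid filling $\tau*\eta$, whose reading word is the concatenation $w(\tau)w(\eta)$. The essential check is that this leaves the extra entry alone: it lies in the bottom-left component $\alpha^1$, which $\beta$ never meets, and since $|\alpha|\ge2$ one has $w(\tau*\eta)(2)=w(\tau)(2)$, so its interpreted value $-w(2)$ and the monotonicity conditions around it are inherited from $\tau$. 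Thus $(\tau,\eta)\mapsto\tau*\eta$ is a monomial-preserving bijection onto the type~D tableaux of shapes $\alpha\cdot\beta$ and $\alpha\rhd\beta$, which gives the product formula.

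I would then record the companion fact that $\bsD_{A\oplus\beta}=\bsD_A\cdot\bs_\beta$ whenever $A$ is a generalized pseudo-ribbon of size $\ge2$ and $\beta$ is a generalized ribbon: a type~D tableau of the disconnected shape $A\oplus\beta$ splits uniquely into a type~D tableau of $A$ and a type~A tableau of $\beta$ with concatenating reading words, the extra entry localizing in $A$ precisely because $|A|\ge2$. Iterating this with $|\alpha^1|\ge2$ (which keeps every truncation of size $\ge2$) yields the factorization. For the expansion I would induct on the number $k$ of components: by the recursive definition of $[\alpha]$ and the product formula applied to each connected $\beta\in[\alpha^1\oplus\cdots\oplus\alpha^{k-1}]$, the sum $\sum_{\gamma\in[\alpha]}\bsD_\gamma$ collapses to $\big(\sum_\beta\bsD_\beta\big)\cdot\bs_{\alpha^k}$, which by the inductive hypothesis and the companion fact equals $\bsD_{\alpha^1\oplus\cdots\oplus\alpha^{k-1}}\cdot\bs_{\alpha^k}=\bsD_\alpha$.

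The main obstacle is the single case this induction misses, namely $k=2$ with $|\alpha^1|=1$, where both the product formula and the companion fact require size $\ge2$ and so fail for the truncation $A=\alpha^1$. I expect to settle this base case by a direct junction analysis. Writing $a=w(1)$ and $b=w(2)$, the extra-entry condition forces $a+b\ge0$ when $\alpha^1=(1)$ and $a+b<0$ when $\alpha^1=(0,1)$, and this same inequality holds verbatim in each of the connected shapes $\alpha^1\cdot\alpha^2$ and $\alpha^1\rhd\alpha^2$; meanwhile the comparison $b<a$ versus $a\le b$ at the junction partitions the fillings between $\bsD_{\alpha^1\cdot\alpha^2}$ and $\bsD_{\alpha^1\rhd\alpha^2}$. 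Hence their sum counts exactly the fillings of $\alpha^1\oplus\alpha^2$, which establishes $\bsD_{\alpha^1\oplus\alpha^2}=\bsD_{\alpha^1\cdot\alpha^2}+\bsD_{\alpha^1\rhd\alpha^2}$ and completes the induction.
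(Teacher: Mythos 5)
Your proof is correct and uses the same engine as the paper's (three-sentence) proof: the junction gluing $\tau*\eta$, together with the observation that the type D condition travels with the reading word and, once $|\alpha^1|\ge2$, localizes entirely in the first component. The only organizational difference is that the paper obtains the expansion $\bsD_\alpha=\sum_{\gamma\in[\alpha]}\bsD_\gamma$ directly by gluing all $k$ components at once ($\tau^1*\cdots*\tau^k$ preserves the reading word, hence $w(2)$, so no case distinction is needed), whereas your route through the product formula forces the separate --- and correctly executed --- junction analysis for the case $k=2$, $|\alpha^1|=1$, which is exactly where the $-w(2)$ interpretation couples the two components.
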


\begin{proof}
If $\tau$ is a type D semistandard tableau of shape $\alpha$ with connected components $\tau^1,\ldots,\tau^k$, then $\tau^1*\cdots*\tau^k$ is a type D semistandard tableau of shape $\gamma\in[\alpha]$. As this procedure can be reversed, the first desired equality holds. If in addition $\alpha=\alpha^1\oplus\cdots\oplus\alpha^k$ and $|\alpha^1|\ge2$, then $\tau$ is type D semistandard if and only if $\tau^1$ is type D semistandard and $\tau^2,\ldots,\tau^k$ are semistandard. This shows the second desired equality. Finally, if $\eta$ is a semistandard tableau of a generalized ribbon shape $\beta$, then $\tau*\eta$ is a type D semistandard tableau of shape $\alpha\cdot\beta$ or $\alpha\rhd\beta$. This implies the third desired equality.
\end{proof}

We define $\NSym^D:=\bigoplus_{n\ge2}\NSym^D_n$ where $\NSym^D_n$ is the $\ZZ$-span of the $\bsD_\alpha$ for all $\alpha\modelsD n$. For each $a\in\ZZ^n$ there exists a unique type D semistandard tableau whose reading word is $a$. This implies that the spanning set $\{\bsD_\alpha:\alpha\modelsD n, n\ge2\}$ linearly independent and hence a basis for $\NSym^D$. 

If $\alpha=(\alpha_1,\ldots,\alpha_\ell)$ is a pseudo-ribbon then we define $\bhD_\alpha:=\bsD_{\alpha_1\oplus\cdots\oplus\alpha_\ell}$. Proposition~\ref{prop:NSymD} implies that  
\[ \bhD_\alpha = \sum_{\beta\cleq \alpha}\bsD_\beta.\]
Hence $\NSym^D$ admits another basis $\{\bhD_\alpha:\alpha\modelsD n, n\ge2\}$.

The last equality in Proposition~\ref{prop:NSymD} shows that $\NSym^D$ is a graded right $\NSym$-module. One also has $\bhD_\alpha\cdot\bh_\beta=\bhD_{\alpha\cdot \beta}$ if $\alpha\modelsD m\ge2$ and $\beta\models n\ge0$. Hence the $\NSym$-module $\NSym^D$ is dual to the $\QSym$-comodule $\QSym^D$ , with bases $\{\bsD_\alpha\}$ and $\{\bhD_\alpha\}$ dual to $\{\FD_\alpha\}$ and $\{\MD_\alpha\}$, respectively.\vskip3pt

Let $\HD_\bullet(0): \HD_2(0)\hookrightarrow \HD_3(0) \hookrightarrow\cdots$ be the tower of 0-Hecke algebras of type D. We define a \emph{type D noncommutative characteristic}
\[ \begin{matrix}
\mathrm{Ch}^D: & G_0(\HD_\bullet(0)) & \xrightarrow\sim & \QSym^D  \\
& \bCD_\alpha & \mapsto & \FD_\alpha  
\end{matrix} \]
and a \emph{type D quasisymmetric characteristic}
\[ \begin{matrix}
 \mathbf{ch}^B: & K_0(\HD_\bullet(0)) & \xrightarrow\sim & \NSym^D \\
 & \bPD_\alpha & \mapsto & \bsD_\alpha
 \end{matrix} \] 
where $\alpha$ runs through all pseudo-compositions of $n\ge2$. 

We also define a right action of $K_0(\H_\bullet(0))$ on $K_0(\HD_\bullet(0))$ and a right coaction of $G_0(\H_\bullet(0))$ on $G_0(\HD_\bullet(0))$ by
\[ M \,\widehat{\otimes}\, N := (M\otimes N)\uparrow\,_{\HD_{m,n}(0)}^{\HD_{m+n}(0)} \qand 
\Delta Q := \sum_{2\le i\le n} Q\downarrow\,_{\HD_{i,n-i}(0)}^{\HD_{n}(0)} \]
for all $M\in K_0(\HD_m(0))$ ($m\ge2$), $N\in K_0(\H_n(0))$, and $Q\in G_0(\HD_n(0))$. 

\begin{proposition}
(i) $K_0(\HD_\bullet(0))$ is a right graded $K_0(\H_\bullet(0))$-module isomorphic to the right graded $\NSym$-module $\NSym^D$ via the noncommutative characteristic maps $\mathbf{ch}^D$ and $\mathbf{ch}$.

\noindent (ii) $G_0(\HD_\bullet(0))$ is a right graded $G_0(\H_\bullet(0))$-comodule isomorphic to the right graded $\QSym$-comodule $\QSym^D$ via the quasisymmetric characteristic maps $\mathrm{Ch}^D$ and $\mathrm{Ch}$.

\noindent(iii) The graded module and comodule structures in (i) and (ii) are dual.
\end{proposition}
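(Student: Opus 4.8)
The plan is to follow the template of the type B case: derive (i) from the induction product formula of Theorem~\ref{thm:IndPD} together with the multiplication rule of Proposition~\ref{prop:NSymD}, and then deduce (ii) and (iii) from (i) by the Frobenius reciprocity available for 0-Hecke algebras. For (i), I would first observe that $\mathbf{ch}^D$ is by construction a graded $\ZZ$-linear isomorphism, since it sends the basis $\{\bPD_\alpha : \alpha\modelsD n,\ n\ge2\}$ of $K_0(\HD_\bullet(0))$ bijectively to the basis $\{\bsD_\alpha\}$ of $\NSym^D$. It therefore suffices to check that $\mathbf{ch}^D$ intertwines the two right actions, i.e. that $\mathbf{ch}^D(\bPD_\alpha\,\widehat\otimes\,\bP_\beta) = \bsD_\alpha\cdot\bs_\beta$ for every pseudo-composition $\alpha\modelsD m$ with $m\ge 2$ and every composition $\beta\models n$; bilinearity then extends this to all of $K_0(\HD_\bullet(0))\otimes K_0(\H_\bullet(0))$.

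To evaluate the left-hand side, I would consider the generalized pseudo-ribbon $\alpha\oplus\beta$, with $\beta$ placed strictly to the northeast of $\alpha$, and apply Theorem~\ref{thm:IndPD}(ii). This gives simultaneously $\bPD_{\alpha\oplus\beta}\cong(\bPD_\alpha\otimes\bP_\beta)\uparrow\,_{\HD_{m,n}(0)}^{\HD_{m+n}(0)} = \bPD_\alpha\,\widehat\otimes\,\bP_\beta$ and $\bPD_{\alpha\oplus\beta}\cong\bigoplus_{\gamma\in[\alpha\oplus\beta]}\bPD_\gamma$. Since $[\alpha\oplus\beta]=\{\alpha\cdot\beta,\ \alpha\rhd\beta\}$ when $\alpha$ and $\beta$ are each connected, this yields $\bPD_\alpha\,\widehat\otimes\,\bP_\beta\cong\bPD_{\alpha\cdot\beta}\oplus\bPD_{\alpha\rhd\beta}$, so that $\mathbf{ch}^D(\bPD_\alpha\,\widehat\otimes\,\bP_\beta)=\bsD_{\alpha\cdot\beta}+\bsD_{\alpha\rhd\beta}$. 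By the last equality of Proposition~\ref{prop:NSymD} this equals $\bsD_\alpha\cdot\bs_\beta=\mathbf{ch}^D(\bPD_\alpha)\cdot\mathbf{ch}(\bP_\beta)$, which is exactly the module-map condition and hence establishes (i).

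For (ii) and (iii), I would use Frobenius reciprocity to show that the restriction-defined coaction on $G_0(\HD_\bullet(0))$ is adjoint to the induction-defined action on $K_0(\HD_\bullet(0))$ with respect to the pairing $\langle P,M\rangle=\dim_\FF\Hom(P,M)$. Concretely, for $M\in K_0(\HD_m(0))$ with $m\ge2$, $N\in K_0(\H_n(0))$, and $Q\in G_0(\HD_{m+n}(0))$, reciprocity gives $\langle M\,\widehat\otimes\,N,\,Q\rangle = \dim\Hom((M\otimes N)\uparrow, Q) = \dim\Hom(M\otimes N,\ Q\downarrow\,_{\HD_{m,n}(0)}^{\HD_{m+n}(0)}) = \langle M\otimes N,\ \Delta Q\rangle$, where only the $i=m$ summand of $\Delta Q=\sum_{2\le i\le n}Q\downarrow\,_{\HD_{i,n-i}(0)}^{\HD_n(0)}$ survives for degree reasons. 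Thus the $K_0$-module and the $G_0$-comodule are dual. Since $\mathbf{ch}^D,\mathbf{ch}$ carry the projective-indecomposable bases to $\{\bsD_\alpha\},\{\bs_\alpha\}$ while $\mathrm{Ch}^D,\mathrm{Ch}$ carry the simple bases to the dual bases $\{\FD_\alpha\},\{F_\alpha\}$, the maps $\mathrm{Ch}^D,\mathrm{Ch}$ are precisely the adjoints of $\mathbf{ch}^D,\mathbf{ch}$. Transporting the representation-theoretic duality through these dual isomorphisms matches it with the duality already established between the $\NSym$-module $\NSym^D$ and the $\QSym$-comodule $\QSym^D$; this shows at once that $\mathrm{Ch}^D$ is a comodule isomorphism, proving (ii), and records the duality (iii).

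I expect the only delicate point to be the index bookkeeping forced by the constraint $n\ge 2$ in type D: one must verify that the parabolic embeddings $\HD_{m,n}(0)\hookrightarrow\HD_{m+n}(0)$ and the restrictions $Q\downarrow\,_{\HD_{i,n-i}(0)}^{\HD_n(0)}$ are taken only over the admissible range ($m\ge2$, respectively $i\ge2$), so that the summation ranges on the two sides of the coaction formula $\FD_\alpha\mapsto\sum_{2\le i\le n}\FD_{\alpha_{\le i}}\otimes F_{\alpha_{>i}}$ agree with those in $\Delta Q$. Once this matching is confirmed, the argument is formally identical to the type B case.
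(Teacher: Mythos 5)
Your proposal is correct and follows exactly the route of the paper's own (very terse) proof: part (i) is obtained by matching the induction/decomposition statement of Theorem~\ref{thm:IndPD}(ii) against the product rule of Proposition~\ref{prop:NSymD}, and parts (ii) and (iii) follow from Frobenius reciprocity for 0-Hecke algebras. Your added care about the $m\ge2$ constraint and the summation range $2\le i\le n$ is a reasonable elaboration of details the paper leaves implicit, but it does not change the argument.
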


\begin{proof}
Theorem~\ref{thm:IndPD} and Proposition~\ref{prop:NSymD} imply (i). The Frobenius reciprocity implies (ii) and (iii).
\end{proof}

\section{Other applications}\label{sec:other}

In this section we provide a few more applications of our tableau approach.

\subsection{Antipodes}\label{sec:antipodes}

The antipode of the Hopf algebra $\Sym \cong G_0(\CC\SS_\bullet)$ is defined by sending a skew Schur function $s_{\lambda/\mu}$ to $(-1)^n s_{\lambda^t/\mu^t}$ where $n=|\lambda/\mu|$. This antipode, up to a sign, corresponds to tensoring a simple $\CC\SS_n$-representation with the sign representation of $\CC\SS_n$ in the Grothendieck group $G_0(\CC\SS_\bullet)$. See, for example, Grinberg and Reiner~\cite[\S2.4, \S4.4]{GrinbergReiner}.

On the other hand, the antipodes of the two Hopf algebras $\QSym\cong G_0(\H_\bullet(0))$ and $\NSym \cong K_0(\H_\bullet(0))$ are given by $S(F_\alpha)=(-1)^{|\alpha|}F_{\alpha^t}$ and $S(\bs_\alpha)=(-1)^{|\alpha|}\bs_{\alpha^t}$ for all compositions $\alpha$~\cite[(5.9), (5.24)]{GrinbergReiner}. This can be interpreted by the representation theory of 0-Hecke algebras of type A: for any composition $\alpha$ one has $\mathbf{ch}(\bP_\alpha)=\bs_\alpha$, and  by Remark~\ref{rem:symmetryA} one obtains $\bP_{\alpha^t}$ from $\bP_{\alpha}$ by taking transpose of standard tableaux of shape $\alpha$. We also discussed certain symmetry between $\bPB_\alpha$ [$\bPD_\alpha$ resp.] and $\bPB_{\alpha^c}$ and [$\bPD_{\alpha^c}$ resp.] in Section~\ref{sec:H0B} [Section~\ref{sec:H0D} resp.]. Now we provide a uniform treatment.

By Fayers~\cite{Fayers}, there are two automorphisms $\H_W(0)$ of the $0$-Hecke algebra $H_W(0)$ of a finite Coxeter system $(W,S)$, which are defined as 
\[ \theta:\pi_s\mapsto -\pib_s=1-\pi_s \qand \phi:\pi_s\mapsto \pi_{w_0sw_0}, \quad \forall s\in S\]
where $w_0$ is the longest element of $W$. One sees that they are both involutions and commute with each other.  By Fayers~\cite{Fayers}, one has $\phi(\pi_s)=\pi_{\sigma(s)}$ for all $s\in S$, where $\sigma$ is an automorphism of the Coxeter diagram of $(W,S)$. 

Given an $\H_W(0)$-module $M$, letting $\pi_s$ act on $M$ by $\theta(\pi_s)$ [$\phi(\pi_s)$ resp.] for all $s\in S$ gives an $\H_W(0)$-module $\theta[M]$ [$\phi[M]$ resp.]. If $M$ happens to be a submodule of $\H_W(0)$ then one can directly apply $\theta$ [$\phi$ resp.] to it and get an $\H_W(0)$-module $\theta(M)$ and [$\phi(M)$ resp.]. Fayers~\cite{Fayers} showed that $\theta[\C_I^S] \cong \C_{I^c}$ and $\phi[\C_I^S] \cong \C_{\sigma(I)}$ for all $I\subseteq S$. We extend this result below.

\begin{proposition}\label{prop:AntiW}
Let $I\subseteq S$. Then one has isomorphisms of $\H_W(0)$-modules $\theta(\P_I^S) \cong \P_{I^c}^S \cong \theta[\P_I^S]$ and $\phi(\P_I^S) = \P_{\sigma(I)}^S \cong \phi[\P_I^S]$.
\end{proposition}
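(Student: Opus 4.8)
The plan is to reduce both statements to two facts already in hand: Fayers' computation of the twisted simple modules, $\theta[\C_I^S]\cong\C_{I^c}^S$ and $\phi[\C_I^S]\cong\C_{\sigma(I)}^S$, together with the classification of projective indecomposable $\H_W(0)$-modules by their simple tops coming from Theorem~\ref{thm:Norton}. The first step is a general lemma valid for any involutive automorphism $\psi$ of $\H_W(0)$ (both $\theta$ and $\phi$ qualify, being involutions): if $M\subseteq\H_W(0)$ is a left ideal, then $m\mapsto\psi(m)$ is an isomorphism $\psi[M]\xrightarrow{\sim}\psi(M)$ of $\H_W(0)$-modules. This is a one-line check: writing the twisted action as $a\cdot_\psi m=\psi(a)m$, one has $\psi(a\cdot_\psi m)=\psi^2(a)\psi(m)=a\,\psi(m)$ precisely because $\psi^2=\mathrm{id}$. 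Applying this to $\theta$ and $\phi$ gives $\theta(\P_I^S)\cong\theta[\P_I^S]$ and $\phi(\P_I^S)\cong\phi[\P_I^S]$, so it suffices to identify the twisted modules $\theta[\P_I^S]$ and $\phi[\P_I^S]$.

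Next I would note that twisting by an algebra automorphism is an auto-equivalence of the module category: since $\psi(\H_W(0))=\H_W(0)$, a subspace is a submodule of $\psi[M]$ if and only if it is a submodule of $M$, so the two submodule lattices coincide. Hence the twist preserves projectivity and indecomposability and commutes with $\rad$ and ${\rm top}$; in particular ${\rm top}(\psi[\P_I^S])=\psi[{\rm top}(\P_I^S)]=\psi[\C_I^S]$. Feeding in Fayers' identities yields ${\rm top}(\theta[\P_I^S])\cong\C_{I^c}^S$ and ${\rm top}(\phi[\P_I^S])\cong\C_{\sigma(I)}^S$. Since each of $\theta[\P_I^S]$ and $\phi[\P_I^S]$ is projective indecomposable, and projective indecomposables are determined up to isomorphism by their tops, I would conclude $\theta(\P_I^S)\cong\P_{I^c}^S$ and $\phi(\P_I^S)\cong\P_{\sigma(I)}^S$, which simultaneously supplies the outer isomorphisms $\P_{I^c}^S\cong\theta[\P_I^S]$ and $\P_{\sigma(I)}^S\cong\phi[\P_I^S]$.

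For the sharper \emph{equality} $\phi(\P_I^S)=\P_{\sigma(I)}^S$ I would argue directly from $\phi(\pib_s)=\pib_{\sigma(s)}$ and $\phi(\pi_s)=\pi_{\sigma(s)}$. Since $\sigma$ is a Coxeter-diagram automorphism it is length-preserving and carries $W_I$ onto $W_{\sigma(I)}$, so $\sigma(w_0(I))=w_0(\sigma(I))$ and $\sigma(w_0(I^c))=w_0(\sigma(I)^c)$; applying $\phi$ to reduced words then gives $\phi(\pib_{w_0(I)})=\pib_{w_0(\sigma(I))}$ and $\phi(\pi_{w_0(I^c)})=\pi_{w_0(\sigma(I)^c)}$. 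Therefore $\phi(\P_I^S)=\H_W(0)\,\pib_{w_0(\sigma(I))}\pi_{w_0(\sigma(I)^c)}=\P_{\sigma(I)}^S$ on the nose.

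The main obstacle is the $\theta$ case. Unlike $\phi$, the map $\theta$ does not respect the distinguished generators, sending $\pib_s\mapsto-\pi_s$ and $\pi_s\mapsto-\pib_s$, so a direct computation only yields $\theta(\P_I^S)=\H_W(0)\,\pi_{w_0(I)}\pib_{w_0(I^c)}$, in which the two factors occur in the opposite order from the defining ideal $\H_W(0)\,\pib_{w_0(I^c)}\pi_{w_0(I)}$ of $\P_{I^c}^S$. There is no evident basis-level matching of these ideals, and this is exactly why I route the $\theta$ statement through the top-classification of projective indecomposables rather than attempting an equality of ideals.
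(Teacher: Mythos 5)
Your proposal is correct, and it reorganizes the argument in a way that genuinely differs from the paper's proof even though both ultimately rest on the same two pillars: identifying the top of the module in question and invoking the fact that a projective indecomposable is determined up to isomorphism by its top. The paper handles $\theta(\P_I^S)$ and $\theta[\P_I^S]$ by two parallel but separate arguments: for $\theta(\P_I^S)$ it observes that $\theta$ carries Norton's first decomposition of $\H_W(0)$ to his second one, so that $\theta(\P_I^S)=\H_W(0)\pi_{w_0(I)}\pib_{w_0(I^c)}$ is projective indecomposable with explicit basis $\{\pi_w\pib_{w_0(I^c)}:D(w)=I\}$, and then it computes the $\pi_s$-action on that basis directly to read off that the top is $\C_{I^c}^S$; for $\theta[\P_I^S]$ it argues separately that $\theta[\H_W(0)]$ is again the regular representation and picks off the one-dimensional quotients of the summands. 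You instead prove the bridge isomorphism $\psi(M)\cong\psi[M]$ for left ideals and involutive $\psi$ first, which collapses the two halves into one, and then you get the top of the twisted module for free from Fayers' identities $\theta[\C_I^S]\cong\C_{I^c}^S$ and $\phi[\C_I^S]\cong\C_{\sigma(I)}^S$ (which the paper has already quoted) together with the observation that twisting commutes with $\rad$ and ${\rm top}$. What your route buys is the elimination of the explicit action computation; what the paper's route buys is the concrete basis description of $\theta(\P_I^S)$ inside $\H_W(0)$, which is of independent use. Your direct verification of the on-the-nose equality $\phi(\P_I^S)=\P_{\sigma(I)}^S$ fills in what the paper dismisses with ``one can check,'' and your closing remark correctly diagnoses why the $\theta$ case cannot be settled by matching ideals.

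One small point of rigor: preservation of \emph{projectivity} under twisting does not follow from the coincidence of submodule lattices (indecomposability, $\rad$, and ${\rm top}$ do, but projectivity is not an internal lattice-theoretic property). You should justify it either by noting that $\psi[\,\cdot\,]$ is an exact auto-equivalence of the module category carrying the regular module to an isomorphic copy of itself (by your own bridge lemma applied to $M=\H_W(0)$), hence carrying direct summands of free modules to direct summands of free modules, or by applying $\psi[\,\cdot\,]$ to Norton's decomposition $\H_W(0)=\bigoplus_I\P_I^S$ as the paper does. This is a one-line repair, not a genuine gap.
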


\begin{proof}
Norton~\cite{Norton} provided two decompositions
\[
\H_W(0) = \bigoplus_{I\subseteq S}  \H_W(0) \pib_{w_0(I)}\pi_{w_0(I^c)} = \bigoplus_{I\subseteq S} \H_W(0)\pi_{w_0(I)}\pib_{w_0(I^c)}.
\]
The first decomposition is discussed in Section~\ref{sec:HW0} whose summands are denoted by $\P_I^S$ for all $I\subseteq S$. Applying the automorphism $\theta$ to the first decomposition gives the second one. Thus $\theta(\P_I^S) = \H_W(0)\pi_{w_0(I)}\pib_{w_0(I^c)}$ is projective indecomposable with a basis $\{\pi_w\pib_{w_0(I^c)}: w\in W,\ D(w)=I\}$. If $s\in S$ and $w\in W$ with  $D(w)=I$ then 
\[
\pi_s \pi_{w}\pib_{w_0(I^c)}=
\begin{cases}
\pi_{w}\pib_{w_0(I^c)}, & {\rm if}\ i\in D(w^{-1}), \\
0, & {\rm if}\ i\notin D(w^{-1}),\ D(sw)\ne I, \\
\pi_{sw}\pib_{w_0(I^c)}, & {\rm if}\  i\notin D(w^{-1}),\ D(sw)=I.
\end{cases}
\]
Using $D(w_0(I)^{-1})=I$ one sees that the top of $\theta(\P_I^S)$ is isomorphic to $\C_{I^c}$. Thus $\theta(\P_I^S)\cong \P_{I^c}^S$. On the other hand, one can check that $\phi(\P_I^S) = \P_{\sigma(I)}^S$.

Next, one sees that $\theta[\H_W(0)]$ is still isomorphic to the regular representation of $\H_W(0)$ and decomposes as the direct sum of $\theta[\P_I^S]$ for all $I\subseteq S$. Each direct summand $\theta[\P_I^S]$ has a one-dimensional quotient isomorphic to $\C_{I^c}^S$. Hence $\theta[\P_I^S]\cong \P_{I^c}^S$. A similar argument shows $\phi[\P_I^S] \cong \P_{\sigma(I)}^S$.
\end{proof}

In particular, let $W=\SS_n$. One has $\phi(\pib_i)=\pib_{n-i}$ coming from the nontrivial automorphism of the Coxeter diagram of type $A_{n-1}$~\cite[Proposition 2.4]{Fayers}. 
If we write $\P_\alpha:=\P_I$ and $\C_\alpha:=\C_I$ where $I:=\{s_i: i\in D(\alpha)\}$ for all compositions $\alpha$ of $n$, then combining work of Fayers~\cite{Fayers}, Proposition~\ref{prop:AntiW}, and the observation $D(\rev(\alpha)) = \{ n-i: i\in D(\alpha)\}$, one has
\[ 
\phi\circ\theta[\C_\alpha]\cong \C_{\alpha^t} \qand 
\phi\circ \theta(\P_\alpha) \cong \phi\circ\theta[\P_\alpha] \cong \P_{\alpha^t}.
\]
This interprets the antipodes for $K_0(\H_\bullet(0))$ and $G_0(\H_\bullet(0))$ up to a sign. 

\begin{remark}
Let $\alpha\models n$. Recall that $\H_n(0)$ acts on $\bP_\alpha$ by \eqref{eq:RibbonAction}.   There is another $\H_n(0)$-action on $\bP_\alpha$ by 
\[ \pi_i(\tau): = \begin{cases}
\tau, &  \textrm{if $i$ is in a higher row of $\tau$ than $i+1$},\\
0, & \textrm{if $i$ is in the same row of $\tau$ as $i+1$}, \\
s_i(\tau), & \textrm{if $i$ is in a lower row of $\tau$ than $i+1$}.
\end{cases} \]
for all $i\in[n-1]$ and all standard tableaux $\tau$ of shape $\alpha$. This agrees with $\theta[\bP_\alpha]$ up to a sign and can be extended to generalized ribbons as well.
\end{remark}

\subsection{Polynomial Representations}
It is well known that the representation theory of symmetric groups can be described using polynomials instead of tableaux. We provide an analogue of this for the representation theory of 0-Hecke algebras of type A.

The symmetric group $\mathfrak S_n$ acts on the polynomial ring $\FF[x_1,\ldots,x_n]$ over an arbitrary field $\FF$ by permuting the variables $x_1,\ldots,x_n$. Given a partition $\lambda=(\lambda_1,\ldots,\lambda_\ell)$ of $n$, define 
\[ \Delta_\lambda:=\Delta[1,\lambda_1] \Delta[\lambda_1+1,\lambda_1+\lambda_2] \cdots \Delta[\lambda_1+\cdots+\lambda_{\ell-1}+1,\lambda_1+\cdots+\lambda_\ell] \]
where $\Delta[a,b]:=\prod_{a\le i<j\le b} (x_j-x_i)$. Then $\CC\SS_n\cdot \Delta_\lambda$ is isomorphic to the Specht module $S^{\lambda^t}$. See Lascoux~\cite{PolySn}.

On the other hand, the 0-Hecke algebra $\H_n(0)$ acts on the polynomial ring $\FF[x_1,\ldots,x_n]$ by the \emph{Demazure operators} $\pi_1,\ldots,\pi_{n-1}$, where
\begin{equation}\label{eq:Demazure}
\pi_if:= \frac{x_if-x_{i+1}s_i(f)}{x_i-x_{i+1}},\quad \forall f\in\FF[\bx].
\end{equation}

Let $\alpha=(\alpha_1,\ldots,\alpha_\ell)\models n$ and define $x_\alpha:=\prod_{i\in D(\alpha)} x_1\cdots x_i$. One has
\[ wx_\alpha = \prod_{i\in D(\alpha)} x_{w(1)}\cdots x_{w(i)}, \quad \forall w\in\SS_n. \]
A monomial in $\FF[x_1,\ldots,x_n]$ can be written as $x^d:=x_1^{d_1}\cdots x_n^{d_n}$ where $d=(d_1,\ldots,d_n)$ is a sequence of nonnegative integers. Let $\lambda(d)$ be the partition obtained from $d$ by rearranging its nonzero parts. Given two monomials $x^d$ and $x^e$, write $x^d\prec x^e$ if $\lambda(d)<_L\lambda(e)$, where ``$<_L$'' is the lexicographic order.

\begin{lemma}[Huang~\cite{H0CF}]\label{DemazureAtoms}
If $\alpha$ is a composition of $n$ and $w$ is a permutation in $\SS_n$ with $D(w)\subseteq D(\alpha)$, then
\[
\overline\pi_w x_{\alpha} =w x_{\alpha} +\sum_{x^d\prec x_{\alpha}}c_dx^d, \quad c_d\in\ZZ.
\]
\end{lemma}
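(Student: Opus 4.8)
The plan is to prove the identity by induction on $\ell(w)$, exhibiting $\pib_w x_\alpha$ as $wx_\alpha$ plus a $\ZZ$-combination of strictly $\prec$-smaller monomials. To set up, I would first record the shape of $x_\alpha$: if $D(\alpha)=\{d_1<\cdots<d_r\}$, then the exponent of $x_k$ in $x_\alpha$ is $\lambda_k:=\#\{i\in D(\alpha):i\ge k\}$, so $x_\alpha=x^\lambda$ with $\lambda=(\lambda_1,\lambda_2,\ldots)$ weakly decreasing, and $k\in D(\alpha)$ exactly when $\lambda_k>\lambda_{k+1}$. Hence $\SS_\alpha$ is precisely the stabilizer of $x_\alpha$, and $w\mapsto wx_\alpha$ sends distinct elements of $\SS^\alpha$ to distinct rearrangements of $x^\lambda$; in particular $\lambda(wx_\alpha)=\lambda$ for all $w\in\SS^\alpha$, so each $wx_\alpha$ is $\prec$-maximal among monomials of its content.

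The computational heart is the action of a single generator. From $\pib_i f=\pi_i f-f=x_{i+1}(f-s_i f)/(x_i-x_{i+1})$ and a geometric-series expansion, I would show that for any monomial $x^d$, with $a:=d_j$ and $b:=d_{j+1}$, the polynomial $\pib_j x^d$ is an integer combination of monomials obtained from $x^d$ by replacing the exponent pair $(a,b)$ at positions $j,j+1$ with pairs of equal sum $a+b$ lying weakly between $a$ and $b$. The extreme such term reproduces $s_jx^d$ when $a>b$ (with coefficient $+1$) and $-x^d$ when $a<b$, while $\pib_jx^d=0$ when $a=b$; every remaining monomial has a strictly more balanced exponent pair. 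Since replacing a pair by a strictly more balanced one of the same sum produces a strictly dominated content partition, and strict dominance (for $\unlhd$) implies strict $<_L$, I conclude that $\pib_j$ never increases content, and that its content-preserving part is the expected $0$-Hecke sorting action ($s_jx^d$, $-x^d$, or $0$).

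For the induction I need $\SS^\alpha$ to be closed under deleting the first letter of a reduced word. This holds because $\SS^\alpha=\{w:D(w)\subseteq D(\alpha)\}$ is a lower ideal of the left weak order: if $u$ is a suffix of a reduced word for $w$ and some generator $s$ of $\SS_\alpha$ were a right descent of $u$, then $s$ would be a right descent of $w$ as well, contradicting $D(w)\subseteq D(\alpha)$. Thus I may write $w=s_{i_1}w'$ reduced with $w'\in\SS^\alpha$ and $\ell(w')=\ell(w)-1$, and apply the inductive hypothesis $\pib_{w'}x_\alpha=w'x_\alpha+(\text{$\prec$-lower})$. Applying $\pib_{i_1}$, the lower terms stay lower by the previous step, so it remains to analyze $\pib_{i_1}(w'x_\alpha)$. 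Because $\ell(s_{i_1}w')>\ell(w')$, the exponents of $w'x_\alpha$ at positions $i_1,i_1+1$ are weakly decreasing; and since $w\ne w'$ are distinct minimal coset representatives, $wx_\alpha=s_{i_1}(w'x_\alpha)\ne w'x_\alpha$, forcing these two exponents to be \emph{strictly} decreasing. We are therefore in the case $a>b$, whose content-preserving output is $s_{i_1}(w'x_\alpha)=wx_\alpha$ with coefficient $1$; all contributions to monomials of content $\lambda$ other than $wx_\alpha$ vanish, and every coefficient produced is an integer.

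The main obstacle I anticipate is the single-generator content estimate: verifying cleanly that each monomial of $\pib_jx^d$ has content $\unlhd\lambda(d)$, with equality only for the extreme term, via the dominance comparison of the more-balanced exponent pairs. The second delicate point is guaranteeing, within the induction, that the leading term $w'x_\alpha$ falls in the strictly-decreasing case $a>b$ rather than being annihilated; this is exactly where the distinctness of minimal coset representatives and the length condition $\ell(s_{i_1}w')>\ell(w')$ must be combined.
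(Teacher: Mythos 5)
The paper gives no proof of this lemma here; it is quoted from \cite{H0CF}, where it is established by exactly the kind of argument you give: induction on $\ell(w)$, using the explicit geometric-series expansion of $\pib_i$ on a monomial to see that the only content-preserving output is $s_ix^d$, $-x^d$, or $0$, and that all other monomials produced are strictly smaller in dominance (hence in $\prec$). Your proposal is correct and complete --- including the two delicate points you flag, namely that $D(w')\subseteq D(w)\subseteq D(\alpha)$ for the suffix $w'$, and that $w\SS_\alpha\neq w'\SS_\alpha$ forces the strict inequality $a>b$ so the leading term survives --- so it is essentially the same proof as the cited one.
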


Now we describe the representations of $\H_n(0)$ constructed in Section~\ref{sec:H0A} using polynomials instead of tableaux.

\begin{proposition}\label{prop:PolyM}
If $\alpha$ is a composition of $n$ then the $\H_n(0)$-module $\H_n(0) x_\alpha$ has a basis $\{ \pib_w x_\alpha: D(w)\subseteq D(\alpha)\}$ and is isomorphic to $\bM_\alpha$.
\end{proposition}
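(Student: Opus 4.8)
The plan is to realize $\H_n(0)x_\alpha$ as a cyclic module and match its cyclic generator with that of the module $\M_J^S$ of Theorem~\ref{thm:Norton}, where $J:=\{s_i:i\in D(\alpha)\}$; this $\M_J^S$ is exactly the $\H_n(0)$-module underlying $\bM_\alpha$.

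First I would record that $\pib_i x_\alpha=0$ whenever $i\notin D(\alpha)$. Indeed, if $i\notin D(\alpha)$ then every factor $x_1\cdots x_j$ (for $j\in D(\alpha)$) of $x_\alpha$ either omits both $x_i,x_{i+1}$ (when $j<i$) or contains both of them (when $j\ge i+1$), so $s_i$ fixes $x_\alpha$; substituting $s_i(x_\alpha)=x_\alpha$ into the Demazure formula \eqref{eq:Demazure} gives $\pi_i x_\alpha=x_\alpha$, i.e. $\pib_i x_\alpha=0$. Using the parabolic factorization $w=zu$ with $z\in\SS^\alpha$, $u\in\SS_\alpha$ and $\ell(w)=\ell(z)+\ell(u)$, one has $\pib_w=\pib_z\pib_u$; if $u\ne e$, then writing $\pib_u=\pib_{u'}\pib_i$ with $i\notin D(\alpha)$ gives $\pib_u x_\alpha=\pib_{u'}\pib_i x_\alpha=0$, hence $\pib_w x_\alpha=0$ for every $w\notin\SS^\alpha$. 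Consequently $\H_n(0)x_\alpha$ is spanned by $\{\pib_w x_\alpha: D(w)\subseteq D(\alpha)\}$, and the annihilator of the cyclic generator $x_\alpha$ contains $\mathrm{span}\{\pib_w: w\notin\SS^\alpha\}$.

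The crucial step is linear independence of this spanning set, for which I would invoke Lemma~\ref{DemazureAtoms}: for $w\in\SS^\alpha$ one has $\pib_w x_\alpha=wx_\alpha+\sum_{x^d\prec x_\alpha}c_d x^d$, so the $\pib_w x_\alpha$ are triangular with leading monomials $wx_\alpha$. It then suffices to check that $w\mapsto wx_\alpha$ is injective on $\SS^\alpha$: the exponent of $x_j$ in $wx_\alpha$ counts the cut points $i\in D(\alpha)$ with $w^{-1}(j)\le i$, which records exactly which block of $[n]$ (as cut out by $D(\alpha)$) the position $w^{-1}(j)$ lies in, and since $w$ is increasing on each such block this data determines $w$. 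Hence the leading monomials are distinct, the $\pib_w x_\alpha$ with $w\in\SS^\alpha$ form a basis, proving the first assertion and yielding $\mathrm{Ann}(x_\alpha)=\mathrm{span}\{\pib_w:w\notin\SS^\alpha\}$.

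Finally I would identify $\H_n(0)x_\alpha$ with $\bM_\alpha$. By Theorem~\ref{thm:Norton}(ii), $\M_J^S=\H_n(0)\pi_{w_0(J^c)}$ has basis $\{\pib_w\pi_{w_0(J^c)}:D(w)\subseteq J\}$, and a short computation — if $i\in D(w)\cap J^c$, write $\pib_w=\pib_{w'}\pib_i$ and use $\pib_i\pi_{w_0(J^c)}=0$ — shows $\pib_w\pi_{w_0(J^c)}=0$ precisely when $w\notin\SS^\alpha$, so $\mathrm{Ann}(\pi_{w_0(J^c)})=\mathrm{span}\{\pib_w:w\notin\SS^\alpha\}$ as well. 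Since the two cyclic generators $x_\alpha$ and $\pi_{w_0(J^c)}$ have equal annihilators, the assignment $\pi_{w_0(J^c)}\mapsto x_\alpha$ extends to an $\H_n(0)$-module isomorphism $\M_J^S\xrightarrow{\sim}\H_n(0)x_\alpha$. As $\M_J^S\cong\bM_\alpha$ — both decompose as $\bigoplus_{\beta\cleq\alpha}\bP_\beta$ by Theorem~\ref{thm:Norton}(ii) and Theorem~\ref{thm:IndP} — this gives $\H_n(0)x_\alpha\cong\bM_\alpha$. The main obstacle is the linear-independence step, which is exactly where Lemma~\ref{DemazureAtoms} and the injectivity of $w\mapsto wx_\alpha$ on minimal coset representatives are needed; the remaining bookkeeping rests on the straightening relations \eqref{eq:HW0action}.
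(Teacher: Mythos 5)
Your proof is correct, and its first half coincides with the paper's argument: you show $\pib_i x_\alpha=0$ for $i\notin D(\alpha)$, deduce that $\{\pib_w x_\alpha: D(w)\subseteq D(\alpha)\}$ spans, and obtain linear independence from the triangularity in Lemma~\ref{DemazureAtoms}. You add two details the paper leaves implicit: the parabolic factorization $w=zu$ showing $\pib_w x_\alpha=0$ for all $w\notin\SS^\alpha$, and the verification that $w\mapsto wx_\alpha$ is injective on $\SS^\alpha$ (needed for the leading monomials to be distinct). Where you genuinely diverge is the identification with $\bM_\alpha$. The paper sends $\pib_w x_\alpha$ to the standard tableau of shape $\alpha_1\oplus\cdots\oplus\alpha_\ell$ with reading word $w$ and asserts (``one can check'') that this intertwines the $\H_n(0)$-actions; you instead compare annihilators of the two cyclic generators $x_\alpha$ and $\pi_{w_0(J^c)}$, pinning both down as $\mathrm{span}\{\pib_w: w\notin\SS^\alpha\}$ by the dimension count, so that $\H_n(0)x_\alpha\cong \H_n(0)/\mathrm{Ann}(x_\alpha)\cong\M_J^S\cong\bM_\alpha$. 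Your route avoids having to verify compatibility of two combinatorially defined actions, at the cost of passing through Theorem~\ref{thm:Norton}(ii); note that the final step can be shortened, since Theorem~\ref{thm:IndP}(ii) applied to $\alpha_1\oplus\cdots\oplus\alpha_\ell$ gives $\bM_\alpha\cong\P^S_{\emptyset,J^c}=\M_J^S$ directly, without invoking the matching direct sum decompositions. Both arguments are sound.
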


\begin{proof}
If $i\notin D(\alpha)$ then $\pib_i x_\alpha=0$ since $x_i$ and $x_{i+1}$ have the same exponent in $x_\alpha$. Hence $\H_n(0) x_\alpha$ is spanned by $\{\pib_w x_\alpha: D(w)\subseteq D(\alpha)\}$, which is triangularly related to the linearly independent set $\{wx_\alpha: w\in\SS_n,\ D(w)\subseteq D(\alpha)\}$ by Lemma~\ref{DemazureAtoms}. Hence $\{\pib_w x_\alpha: D(w)\subseteq D(\alpha)\}$ is a basis for $\H_n(0)x_\alpha$.

For any $w\in \SS_n$ with $D(w)\subseteq D(\alpha)$, there is a unique standard tableau of shape $\alpha_1\oplus\cdots\oplus\alpha_\ell$ whose reading word is $w$. This gives a vector space isomorphism $\H_n(0)x_\alpha \cong \bM_\alpha$. One can check that it preserves the $\H_n(0)$-actions, and thus is an isomorphism of $\H_n(0)$-modules.
\end{proof}

\begin{corollary}
Let $\alpha=\alpha^1\oplus\cdots\oplus\alpha^k$ be a generalized ribbon with connected components $\alpha_i\models n_i$ for $i=1,\ldots,k$, and let $|\alpha| = n$. Then the $\H_n(0)$-module $\H_n(0)\cdot \pib_{w_0(\alpha^1\rhd\cdots\rhd\alpha^k)} \cdot x_{\alpha^1\cdots\alpha^k}$ is isomorphic to $\bP_\alpha$ and has a basis
\[
\left\{ \pib_w x_{\alpha^1\cdots\alpha^k} : w\in\SS_n,\ D(\alpha^1\rhd\cdots\rhd\alpha^k) \subseteq D(w) \subseteq D(\alpha^1\cdots\alpha^k) \right\}.
\] 
\end{corollary}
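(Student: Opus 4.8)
The plan is to move the polynomial module into the setting of Norton's modules via Proposition~\ref{prop:PolyM} and then identify it with $\IJS\cong\bP_\alpha$ purely through the combinatorics of descent sets. Write $\gamma:=\alpha^1\cdots\alpha^k$ and $\beta:=\alpha^1\rhd\cdots\rhd\alpha^k$, so that the module in question is $N:=\H_n(0)\,\pib_{w_0(\beta)}\,x_\gamma$. Put $I=\{s_i:i\in D(\beta)\}$ and $J=\{s_j:j\in D(1^{n_1}\rhd\cdots\rhd 1^{n_k})\}$, so that $\bP_\alpha\cong\IJS$ by Theorem~\ref{thm:IndP}(ii), and set $J_0:=\{s_i:i\in D(\gamma)\}$.

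The combinatorial heart of the argument is a pair of descent-set identities. Since replacing each $\rhd$ by $\cdot$ creates exactly one new descent, located at a boundary box, one has $D(\gamma)=D(\beta)\sqcup\{n_1,n_1+n_2,\ldots,n_1+\cdots+n_{k-1}\}$ and, for the same reason, $D(1^{n_1}\rhd\cdots\rhd 1^{n_k})=[n-1]\setminus\{n_1,\ldots,n_1+\cdots+n_{k-1}\}$. From these I would deduce the two facts $J\setminus I=J_0^c$ and $J^c\cup I=J_0$, together with $I\subseteq J$ and $D(\beta)\subseteq D(\gamma)$.

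Next I would apply Proposition~\ref{prop:PolyM} together with Theorem~\ref{thm:IndP}(ii) for the fully disconnected ribbon obtained by separating every row of $\gamma$: the composite isomorphism $\H_n(0)x_\gamma\cong\bM_\gamma\cong\P^S_{\emptyset,J_0^c}=\M^S_{J_0}$ sends $\pib_w x_\gamma\mapsto\pib_w\pi_{w_0(J_0^c)}$ for every $w$ with $D(w)\subseteq D(\gamma)$. Because $w_0(\beta)=w_0(I)$ is the longest element of $W_I$ and satisfies $D(w_0(\beta))=D(\beta)\subseteq D(\gamma)$, this isomorphism carries the cyclic generator $\pib_{w_0(\beta)}x_\gamma$ to $\pib_{w_0(I)}\pi_{w_0(J_0^c)}$. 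Since an isomorphism takes a cyclic submodule to the cyclic submodule on the image of its generator, $N$ is carried onto $\H_n(0)\,\pib_{w_0(I)}\pi_{w_0(J_0^c)}$, which by $J\setminus I=J_0^c$ equals $\H_n(0)\,\pib_{w_0(I)}\pi_{w_0(J\setminus I)}=\IJS$; with Theorem~\ref{thm:IndP} this gives $N\cong\IJS\cong\bP_\alpha$. For the basis, Theorem~\ref{thm:IJS}(i) furnishes the basis $\{\pib_w\pi_{w_0(J\setminus I)}:I\subseteq D(w)\subseteq J^c\cup I\}$ of $\IJS$; using $J^c\cup I=J_0$ the index set becomes $\{w:D(\beta)\subseteq D(w)\subseteq D(\gamma)\}$, and pulling these vectors back along the isomorphism above (which replaces $\pi_{w_0(J_0^c)}$ by $x_\gamma$) produces exactly the asserted basis, its linear independence being inherited from that of the basis of $\IJS$.

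I expect the only genuine obstacle to be the descent-set bookkeeping of the second paragraph, namely verifying $J\setminus I=J_0^c$, $J^c\cup I=J_0$, and the identification $w_0(\beta)=w_0(I)$; after these are in place the whole argument is formal. No fresh computation with the Demazure operators is needed beyond what Proposition~\ref{prop:PolyM} already supplies, so the identification rests entirely on that proposition, Theorem~\ref{thm:IndP}, and Theorem~\ref{thm:IJS}.
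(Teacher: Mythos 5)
Your argument is correct and takes essentially the same route as the paper: both rest on Proposition~\ref{prop:PolyM} to identify $\H_n(0)x_{\alpha^1\cdots\alpha^k}$ with $\bM_{\alpha^1\cdots\alpha^k}$ and then carve out the cyclic submodule generated by $\pib_{w_0(\alpha^1\rhd\cdots\rhd\alpha^k)}x_{\alpha^1\cdots\alpha^k}$. The paper states the identification with $\bP_\alpha$ directly via the correspondence between the basis elements $\pib_w x_{\alpha^1\cdots\alpha^k}$ and standard tableaux of shape $\alpha$, whereas you route explicitly through Norton's module $\IJS$ and check the descent-set identities $J\setminus I=J_0^c$ and $J^c\cup I=J_0$; that bookkeeping is exactly the content of Theorem~\ref{thm:IndP} made explicit, so the two proofs agree in substance.
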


\begin{proof}
By Proposition~\ref{prop:PolyM}, $\H_n(0)\cdot \pib_{w_0(\alpha^1\rhd\cdots\rhd\alpha^k)} \cdot x_{\alpha^1\cdots\alpha^k} \subseteq \H_n(0)\cdot x_{\alpha^1\cdots\alpha^k}$ has the desired basis. An element $\pib_w  x_{\alpha^1\cdots\alpha^k}$ in this basis corresponds to a standard tableau of shape $\alpha$ whose reading word is $w$. This gives the desired isomorphism between $\H_n(0)\cdot \pib_{w_0(\alpha^1\rhd\cdots\rhd\alpha^k)} \cdot x_{\alpha^1\cdots\alpha^k}$ and $\bP_\alpha$. 
\end{proof}

\begin{example}\label{ex:M211}
The $\H_n(0)$-module generated by $x_{211} = x_1^2x_2^2x_3$ is isomorphic to $\bM_{211}=\bP_{2\oplus1\oplus1}$ as illustrated below. The submodules $\H_n(0) \pib_2 x_{211}$, $\H_n(0) \pib_3 x_{211}$,  and $\H_n(0) \pib_2\pib_3\pib_2 x_{211}$ of $\H_n(0) x_{211}$ are isomorphic to $\bP_{21\oplus 1}$, $\bP_{2\oplus11}$, and $\bP_{211}$, respectively.
\[ \xymatrix @C=-5pt{ 
&&x_1^2x_2^2x_3\ar@(ru,r)[]^{\pib_1=0} \ar@{->}[ld]^{\pib_2} \ar[rd]^{\pib_3} \\
& x_1^2x_2x_3^2 \ar@(u,lu)[]_{\pib_2=-1} \ar[ld]^{\pib_1} \ar[rd]_{\pib_3}  & & x_1^2x_2^2x_4 \ar@(ru,r)[]^{\pib_1=0,\ \pib_3=-1} \ar[rd]^{\pib_2}  \\
x_1x_2^2x_3^2 \ar@(u,lu)[]_{\txt{$\substack{\pib_1=-1 \\ \pib_2=0}$}} \ar[rd]_{\pib_3} & & \txt{ $\substack{x_1^2x_2x_4^2+\\x_1^2x_2x_3x_4}$} \ar@(ru,r)[]^{\pib_3=-1} \ar[ld]^{\pib_1}  \ar[rd]_{\pib_2} & & \txt{$\substack{x_1^2x_3^2x_4+\\x_1^2x_2x_3x_4}$} \ar[ld]^{\pib_3} \ar[rd]^{\pib_1} \ar@(ru,r)[]^{\pib_2=-1} \\
& \txt{$\substack{x_1x_2^2x_4^2+\\x_1x_2^2x_3x_4}$} \ar[rd]_{\pib_2}  \ar@(l,ld)[]_{\txt{$\substack{\pib_1=-1\\ \pib_3=-1}$}} & & x_1^2x_3x_4^2 \ar[rd]_{\pib_1} \ar@(l,ld)[]_{\txt{$\substack{\pib_2=-1\\ \pib_3=-1}$}} & & \txt{$\substack{x_2^2x_3^2x_4+x_1x_2x_3^2x_4\\+x_1x_2^2x_3x_4}$} \ar[ld]^{\pib_3} \ar@(rd,d)[]^{\txt{$\substack{\pib_1=-1\\ \pib_2=0}$}} \\
&& \txt{$\substack{x_1x_3^2x_4^2+x_1x_2x_3x_4^2\\+x_1x_2x_3^2x_4}$} \ar[rd]_{\pib_1} \ar@(ld,d)[]_{\txt{$\substack{\pib_2=-1,\ \pib_3=0}$}} && \txt{$\substack{x_2^2x_3x_4^2+\\x_1x_2x_3x_4^2}$}  \ar[ld]^{\pib_2} \ar@(r,rd)[]^{\txt{$\substack{\pib_1= \pib_3=-1}$}} \\
&&& x_2x_3^2x_4^2 \ar@(r,rd)[]^{\txt{$\substack{\pib_1=\pib_2=-1, \pib_3=0}$}} } \]
\end{example}

\begin{remark}
(i) If $\alpha$ is a composition of $n$ then one has $\H_n(0) \pib_{w_0(\alpha)} x_{\alpha}$ isomorphic to the projective indecomposable $\H_n(0)$-module $\bP_\alpha$. In our earlier work~\cite{H0CF}, we showed that the action of $\H_n(0)$ on the polynomial ring $\FF[x_1,\ldots,x_n]$ induces the same coinvariant algebra as $\SS_n$, and this coinvariant algebra carries the regular representation of $\H_n(0)$ as it decomposes into a direct sum of the projective indecomposable modules $\H_n(0) \pib_{w_0(\alpha)} x_{\alpha}$ for all compositions of $\alpha$.

\noindent(ii)  For any generalized ribbon $\alpha$ of size $n$, one can also embed the $\H_n(0)$-module $\bP_\alpha$ into the \emph{Stanley-Reisner ring of the Boolean algebra of rank $n$}, which admits a natural action of the 0-Hecke algebra $\H_n(0)$ similarly to the polynomial ring $\FF[x_1,\ldots,x_n]$ (see our earlier work~\cite{H0SR}).
\end{remark}

\subsection{Skew quasisymmetric and noncommutative symmetric functions}\label{sec:skew}
Let $A$ be a graded Hopf algebra whose graded components are all finite dimensional. Then there exists a (restricted) dual graded Hopf algebra $A^o$. Every element $f\in A^o$ gives two skewing operators, i.e. for all $a\in A$ whose coproduct is $\Delta(a)=\sum a_1\otimes a_2$, one has
\[
a/f:=\sum a_1f(a_2) \qand f\backslash a:=\sum f(a_1)a_2.
\]
For example, in the self-dual commutative Hopf algebra $\Sym$, the above skew operations both give the skew schur functions $s_{\lambda/\mu}=s_\lambda/s_\mu=s_\mu\backslash s_\lambda$. See Grinberg and Reiner~\cite[\S2.8]{GrinbergReiner}.

One can also apply these skewing operations to $\QSym$ and $\NSym$. Let $\alpha$ and $\beta$ be two compositions. Lam, Lauve, and Sottile~\cite[\S5]{Skew} observed that 
\[ F_{\alpha/\beta}:= F_\alpha / \bs_\beta = 
\begin{cases}
F_{\alpha_{\leq m}}, & {\rm if}\ \alpha_{>m}=\beta, \\
0, & \textrm{otherwise,}
\end{cases} \]
\[ F_{\beta\backslash\alpha}:= \bs_\beta  \backslash F_\alpha = 
\begin{cases}
F_{\alpha_{>m}}, & {\rm if}\ \alpha_{\leq m}=\beta, \\
0, & \textrm{otherwise.}
\end{cases} \]
It was also mentioned in \cite{Skew} that skew ribbon Schur functions do not correspond to skew ribbon shapes in a simple way. Now using our tableau approach to $\NSym$ we obtain an explicit description for skew ribbon Schur functions. 

\begin{proposition}
Let $\alpha$ and $\beta$ be compositions. Then $\bs_{\alpha/\beta}:= \bs_\alpha / F_\beta$ equals $\bs_{\beta\backslash\alpha}:= F_\beta\backslash\bs_\alpha$. Moreover, one has
\[ \bs_{\alpha/\beta} = 
\sum_{\substack{\alpha=\gamma\sqcup\delta \\ \beta\in[\delta]}} \bs_\gamma 
= \sum_{\substack{\alpha=\gamma\sqcup\delta \\ \gamma'\in[\gamma] \\ \beta\in[\delta]}} \bs_{\gamma'} 
\qand \bs_{\beta\backslash\alpha} = 
\sum_{\substack{\alpha=\gamma\sqcup\delta\\ \beta\in[\gamma]}} \bs_\delta
= \sum_{\substack{\alpha=\gamma\sqcup\delta\\ \beta\in[\gamma] \\ \delta'\in[\delta]}} \bs_{\delta'}. \]
\end{proposition}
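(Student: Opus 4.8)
The plan is to reduce the entire statement to three already-established facts: the coproduct formula of Theorem~\ref{thm:CoprodSchur}, the duality of the bases $\{F_\alpha\}$ and $\{\bs_\alpha\}$ (so that $\langle F_\mu,\bs_\nu\rangle=\delta_{\mu\nu}$ for compositions $\mu,\nu$), and Proposition~\ref{prop:NSym}, which expands $\bs_\eta$ for a generalized ribbon $\eta$ into the composition-indexed $\bs$-basis.

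First I would dispose of the identity $\bs_{\alpha/\beta}=\bs_{\beta\backslash\alpha}$ by a soft adjointness argument rather than by any combinatorics. Writing $\Delta\bs_\alpha=\sum(\bs_\alpha)_1\otimes(\bs_\alpha)_2$, the definitions of the two skewing operations give, for every $g\in\QSym$,
\[ \langle \bs_\alpha/F_\beta,\,g\rangle=\langle \bs_\alpha,\,g\,F_\beta\rangle \qand \langle F_\beta\backslash\bs_\alpha,\,g\rangle=\langle \bs_\alpha,\,F_\beta\,g\rangle, \]
where in each case I have used that the product of $\QSym$ is dual to the coproduct of $\NSym$. Since $\QSym$ is a commutative ring, $g\,F_\beta=F_\beta\,g$, so the two left-hand pairings coincide for all $g$; nondegeneracy of the pairing then forces $\bs_\alpha/F_\beta=F_\beta\backslash\bs_\alpha$ in $\NSym$. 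This is the cleanest route, and it is the step where I expect the real difficulty to hide: a direct proof equating the two explicit sums in the statement would instead demand a nonobvious bijection on two-colored ribbon fillings that interchanges the two colors, which the adjointness argument avoids entirely.

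It then remains to compute the two sides explicitly. Plugging the coproduct formula $\Delta\bs_\alpha=\sum_{\alpha=\gamma\sqcup\delta}\bs_\gamma\otimes\bs_\delta$ into $\bs_\alpha/F_\beta=\sum(\bs_\alpha)_1\,F_\beta\!\left((\bs_\alpha)_2\right)$ yields
\[ \bs_\alpha/F_\beta=\sum_{\alpha=\gamma\sqcup\delta}\langle F_\beta,\bs_\delta\rangle\,\bs_\gamma. \]
By Proposition~\ref{prop:NSym}, $\bs_\delta=\sum_{\delta'\in[\delta]}\bs_{\delta'}$, and since $[\delta]$ is a set of distinct compositions and $\{F_\mu\},\{\bs_\nu\}$ are dual bases, $\langle F_\beta,\bs_\delta\rangle$ equals $1$ when $\beta\in[\delta]$ and $0$ otherwise. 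This gives the first displayed formula, and substituting $\bs_\gamma=\sum_{\gamma'\in[\gamma]}\bs_{\gamma'}$ gives its refinement. The computation for $F_\beta\backslash\bs_\alpha=\sum F_\beta\!\left((\bs_\alpha)_1\right)(\bs_\alpha)_2$ is identical except that $F_\beta$ now pairs with the first tensor factor $\bs_\gamma$, producing $\sum_{\alpha=\gamma\sqcup\delta,\ \beta\in[\gamma]}\bs_\delta$ and, after expanding $\bs_\delta$, the remaining formula. The only points needing care are the bookkeeping of which tensor factor $F_\beta$ contracts against, and the observation that the membership $\beta\in[\delta]$ is genuinely $0/1$-valued, which is exactly why the distinctness of the elements of $[\delta]$ must be noted.
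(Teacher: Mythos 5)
Your proposal is correct and follows essentially the same route as the paper: the expansion formulas come from pairing the coproduct formula of Theorem~\ref{thm:CoprodSchur} against the dual bases exactly as the paper does, and your adjointness-plus-commutativity-of-$\QSym$ argument for $\bs_{\alpha/\beta}=\bs_{\beta\backslash\alpha}$ is just the dual phrasing of the paper's one-line appeal to the cocommutativity of $\NSym$. Your extra care about the distinctness of the compositions in $[\delta]$ is a worthwhile observation the paper leaves implicit, but it does not change the argument.
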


\begin{proof}
One sees that $\NSym$ is cocommutative, i.e. $\sigma\circ\Delta=\Delta$ where $\sigma$ swaps tensor factors. This implies $\bs_{\alpha/\beta} = \bs_{\beta\backslash\alpha}$ for all compositions $\alpha$ and $\beta$. One obtains the expansion formulas for $\bs_{\alpha/\beta}$ and $\bs_{\beta\backslash\alpha}$ by applying the pairing between $\QSym$ and $\NSym$ to the coproduct formula for $\NSym$ provided in Theorem~\ref{thm:CoprodSchur}.
\end{proof}

The following diagrams give $\bs_{23/2}=\bs_{21}+\bs_{1\oplus2}+\bs_{3}$ and  $\bs_{2\backslash 23}=\bs_{1\oplus2}+\bs_{2\oplus1}$, and thus $\bs_{23/2} = \bs_{2\backslash 23} =\bs_{12}+\bs_{21}+2\bs_3$.
\[ \young(:\bullet\hfill\hfill,\bullet\bullet) \quad \young(:\bullet\bullet\hfill,\bullet\hfill)\quad \young(:\bullet\bullet\bullet,\hfill\hfill)
\qquad\qquad\qquad \young(:\hfill\bullet\bullet,\hfill\bullet)\quad \young(:\hfill\hfill\bullet,\bullet\bullet) \]

\subsection{Combinatorial identities}
Krob and Thibon~\cite{KrobThibon} observed that a cyclic $H_n(0)$-module $M:=\H_n(0) v$ has a length filtration $M=M_0\supseteq M_1\supseteq M_2\supseteq \cdots \supseteq M_k=0$, where $M_i$ is a submodule of $M$ spanned by $\{\pib_w v:\ell(w)\ge i\}$, and this length filtration refines the quasisymmetric characteristic of $M$ to a graded version
\[ \mathrm{Ch}_q(M) := \sum_{0\le i\le k-1} q^i \mathrm{Ch}\left(M_i/M_{i+1}\right). \]
For any generalized ribbon $\alpha$ of size $n$ one can apply this to the $\H_n(0)$-module $\bP_\alpha$ and obtain the following result.

\begin{proposition}\label{prop:ChP}
Let $\alpha=\alpha^1\oplus\cdots\oplus\alpha^k$ be a generalized ribbon of size $n$ with connected components $\alpha^i\models n_i$ for $i=1,\ldots,k$. Let $D_0(\alpha) := D(\alpha^1\rhd\cdots\rhd\alpha^k)$ and $D_1(\alpha) := D(\alpha^1\cdots\alpha^k)$. Then
\[ \sum_{ \substack{w\in \SS_n: \\ D_0(\alpha)\subseteq D(w) \subseteq D_1(\alpha) }} q^{\inv(w)} F_{w^{-1}} = \sum_{z\in \SS^{n_1,\ldots,n_k}} q^{\inv(z)} \prod_{1\le i\le k} \sum_{ \substack{ u_i\in \SS_{n_i}: \\ D(u_i)=D(\alpha^i) }} q^{\inv(u)} F_{u^{-1}}. \]
\end{proposition}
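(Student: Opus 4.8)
The plan is to evaluate the graded quasisymmetric characteristic $\mathrm{Ch}_q$ on the single module $\bP_\alpha$ and to recover the two sides of the identity from two descriptions of it. By Theorem~\ref{thm:IndP}, $\bP_\alpha$ has a basis of standard tableaux of shape $\alpha$ whose reading words $w(\tau)$ run over the interval $\{w\in\SS_n: D_0(\alpha)\subseteq D(w)\subseteq D_1(\alpha)\}$, and it is cyclic, generated by the tableau with reading word the minimal element $w_{\min}=w_0(\alpha^1\rhd\cdots\rhd\alpha^k)$. First I would run the Krob--Thibon length filtration on this cyclic module: by \eqref{eq:RibbonAction} each $\pib_i$ either scales a tableau, annihilates it, or raises it by one step in the length of its reading word, so each quotient $M_j/M_{j+1}$ splits into one-dimensional pieces, one per standard tableau $\tau$. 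On the piece attached to $\tau$ the generator $\pib_i$ acts by $-1$ exactly when $i$ is a descent of $\tau$, that is when $i\in D(w(\tau)^{-1})$; hence this piece is the simple module $\bC_{w(\tau)^{-1}}$, sent by $\mathrm{Ch}$ to $F_{w(\tau)^{-1}}$. Grading $\tau$ by $\inv(w(\tau))$---which differs from its filtration index only by the constant $\inv(w_{\min})$, a shift common to both sides below---yields
\[ \mathrm{Ch}_q(\bP_\alpha)=\sum_{\substack{w\in\SS_n\\ D_0(\alpha)\subseteq D(w)\subseteq D_1(\alpha)}} q^{\inv(w)}F_{w^{-1}}, \]
which is the left-hand side.

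For the right-hand side I would use the parabolic factorization matching the isomorphism $\bP_\alpha\cong(\bP_{\alpha^1}\otimes\cdots\otimes\bP_{\alpha^k})\uparrow_{\H_{n_1,\ldots,n_k}(0)}^{\H_n(0)}$ of Theorem~\ref{thm:IndP}. Writing $N_j=n_1+\cdots+n_j$, every $w\in\SS_n$ factors uniquely as $w=z\,(u_1\times\cdots\times u_k)$ with $z\in\SS^{n_1,\ldots,n_k}$ and $u_i\in\SS_{n_i}$, and since $z$ is a minimal coset representative one has $\inv(w)=\inv(z)+\sum_i\inv(u_i)$. The computation used in the proof of Theorem~\ref{thm:IJS}---namely $\ell(zu)=\ell(z)+\ell(u)$ together with $\ell(zus)=\ell(z)+\ell(us)$ for a generator $s$ interior to a block---shows that an interior index is a descent of $w$ if and only if it is a descent of the relevant $u_i$. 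Hence $D_0(\alpha)=\bigcup_i(N_{i-1}+D(\alpha^i))\subseteq D(w)\subseteq D_0(\alpha)\cup\{N_1,\ldots,N_{k-1}\}=D_1(\alpha)$ holds precisely when $D(u_i)=D(\alpha^i)$ for all $i$, the boundary positions $N_j$ being unconstrained and realized freely as $z$ ranges over $\SS^{n_1,\ldots,n_k}$. Thus $w\mapsto(z,u_1,\ldots,u_k)$ is a bijection of the index set of the left-hand side onto the tuples with $z\in\SS^{n_1,\ldots,n_k}$ and $D(u_i)=D(\alpha^i)$.

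Substituting this factorization, with $\inv(w)=\inv(z)+\sum_i\inv(u_i)$ and $F_{w^{-1}}=F_{(z(u_1\times\cdots\times u_k))^{-1}}$, rewrites the left-hand side as
\[ \sum_{z\in\SS^{n_1,\ldots,n_k}}\ \sum_{\substack{u_i\in\SS_{n_i}\\ D(u_i)=D(\alpha^i)}} q^{\inv(z)+\sum_i\inv(u_i)}\,F_{(z(u_1\times\cdots\times u_k))^{-1}}. \]
To read this as the stated product I would invoke the shuffle expansion of the $\QSym$ product: for fixed $u_i$ one has $F_{u_1^{-1}}\cdots F_{u_k^{-1}}=\sum_{z\in\SS^{n_1,\ldots,n_k}}F_{(z(u_1\times\cdots\times u_k))^{-1}}$, the shuffle terms being exactly the inverses $(u_1^{-1}\times\cdots\times u_k^{-1})z^{-1}$, so weighting the $z$-term by $q^{\inv(z)}=q^{\inv(z^{-1})}$ reproduces the right-hand side. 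The main obstacle is precisely this last bookkeeping: one must confirm that the shuffle terms are indexed by the minimal coset representatives and that passing to inverses carries the interleaving inversions to $\inv(z)$ rather than introducing a spurious factor. Both points reduce to the length-additivity $\ell(zu)=\ell(z)+\ell(u)$ already invoked, so once the side conventions are fixed the identity follows by collecting terms.
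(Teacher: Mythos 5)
Your argument is correct in substance and differs from the paper's in its second half. The paper computes $\mathrm{Ch}_q(\bP_\alpha)$ twice: once from the cyclic length filtration (your first paragraph is the same computation), and once by asserting that Theorem~\ref{thm:IndP} yields the right-hand side directly, the common shift $\inv(w_0(\alpha^1\rhd\cdots\rhd\alpha^k))=\sum_i\inv(w_0(\alpha^i))$ cancelling because $w_0(\alpha^1\rhd\cdots\rhd\alpha^k)=w_0(\alpha^1)\cdots w_0(\alpha^k)$. You instead bypass the second module-theoretic computation and prove the identity combinatorially on the left-hand side, via the parabolic factorization $w=z(u_1\times\cdots\times u_k)$, length additivity, the descent analysis borrowed from the proof of Theorem~\ref{thm:IJS}, and the shuffle expansion of the $\QSym$ product. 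This is legitimate and arguably more self-contained: it makes explicit the bookkeeping that the paper's phrase ``Theorem~\ref{thm:IndP} implies'' leaves to the reader, and all of your intermediate claims (that the shuffle terms of $u_1^{-1}\shuffle\cdots\shuffle u_k^{-1}$ are exactly the $(u_1^{-1}\times\cdots\times u_k^{-1})z^{-1}$ with $z\in\SS^{n_1,\ldots,n_k}$, that $\inv(w)=\inv(z)+\sum_i\inv(u_i)$, and that the descent conditions decouple) check out.

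One caution about the very last step, which you rightly single out as the main obstacle. What your bijection produces is $\sum_{z}\sum_{u_1,\ldots,u_k}q^{\inv(z)+\sum_i\inv(u_i)}F_{(z(u_1\times\cdots\times u_k))^{-1}}$, a $q$-weighted shuffle in which each individual shuffle term $z$ carries its own weight $q^{\inv(z)}$. That is the intended reading of the right-hand side, and your phrase ``weighting the $z$-term by $q^{\inv(z)}$'' is exactly right. But it is not literally the displayed expression $\bigl(\sum_z q^{\inv(z)}\bigr)\cdot\prod_i\bigl(\sum_{u_i}q^{\inv(u_i)}F_{u_i^{-1}}\bigr)$, in which the factor $q^{\inv(z)}$ would multiply the entire, already fully shuffled, product: for $n=2$ and $\alpha=1\oplus1$ the left-hand side is $F_2+qF_{11}$ while that reading gives $(1+q)(F_2+F_{11})$. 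So the sum over $z$ does not factor out of the product, and you should not claim that ``collecting terms'' yields the factored form; what your proof establishes is the $q$-shuffle form of the statement, which is also what Corollary~\ref{cor:ChP} actually uses (there each $F$ is replaced by $1$ term by term, after which the $z$-sum does factor).
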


\begin{proof}
The standard tableaux of shape $\alpha$ are in bijection with permutations $w\in\SS_n$ with $D_0(\alpha)\subseteq D(w) \subseteq D_1(\alpha)$, which, by Theorem~\ref{thm:interval}, form an interval under the left weak order of $\SS_n$ starting from the element $w_0(\alpha^1\rhd\cdots\rhd\alpha^k)$. This implies that $\bP_\alpha$ is a cyclic $\H_n(0)$-module generated by the standard tableau of shape $\alpha$ whose reading word is $w_0(\alpha^1\rhd\cdots\rhd\alpha^k)$. Hence
\[ \mathrm{Ch}_q(\bP_\alpha) = \sum_{ \substack{ w\in \SS_n: \\ D_0(\alpha)\subseteq D(w) \subseteq D_1(\alpha) }} q^{\inv(w)-\inv(w_0(\alpha^1\rhd\cdots\rhd\alpha^k))} F_{w^{-1}}. \]

On the other hand, Theorem~\ref{thm:IndP} implies that
\[ \mathrm{Ch}_q(\bP_\alpha) = \sum_{z\in \SS^{n_1,\ldots,n_k}} q^{\inv(z)} \prod_{1\le i\le k} \sum_{ \substack{ u_i\in \SS_{n_i}: \\ D(u_i)=D(\alpha^i) }} q^{\inv(u)-\inv(w_0(\alpha^i))} F_{u^{-1}}. \]
Equating these two expressions for $\mathrm{Ch}_q(\bP_\alpha)$ and observing $w_0(\alpha^1\rhd\cdots\rhd\alpha^k) = w_0(\alpha^1)\cdots w_0(\alpha^k)$ one obtains the desired result.
\end{proof}

Let $\alpha=(\alpha_1,\ldots,\alpha_\ell)\models n$ with partial sums $\sigma_i:=\alpha_1+\cdots+\alpha_i$ for $i=0,1,\ldots,\ell$. The dimension of the $\H_n(0)$-module $\bM_\alpha$ is a multinomial coefficient, which equals the limit as $q\to 1$ of the \emph{q-multinomial coefficient}
\[ \qbin{n}{\alpha}{q} := \frac{[n]!_q}{[\alpha_1]!_q\cdots[\alpha_\ell]!_q} = \sum_{\substack{ w\in\SS_n: \\ D(w)\subseteq D(\alpha) }} q^{\inv(w)} \]
where $[k]_q=1+q+\cdots+q^{k-1}$ and $[k]!_q=[k]_q[k-1]_q\cdots [1]_q$. 
The dimension of the projective indecomposable $\H_n(0)$-module $\bP_\alpha$ is given by the \emph{ribbon number} $r_\alpha$, which is the limit as $q\to1$ of the \emph{$q$-ribbon number}
\[ r_\alpha (q):=\sum_{\substack{ w\in\mathfrak S_n: \\ D(w)=D(\alpha) } } q^{\,{\rm inv}(w)}
=[n]!_q\det\left(\frac1{[\sigma_j-\sigma_{i-1}]!_q}\right)_{i,j=1}^\ell. \]

\begin{corollary}\label{cor:ChP}
Suppose that $\beta$ and $\gamma$ are two compositions of $n$ such that $\beta\cleq \gamma$. Let $(n_1,\ldots,n_k)$ be the composition of $n$ whose descent set equals $D(\gamma)\setminus D(\beta)$, and write $\beta = \alpha^1\rhd\cdots\rhd \alpha^k$ and $\gamma = \alpha^1\cdots\alpha^k$, where $\alpha^i\models n_i$. Then 
\[  \sum_{ \beta\cleq\alpha\cleq\gamma } r_\alpha(q) = \qbin{n}{n_1,\ldots,n_k}{q}r_{\alpha^1}(q)\cdots r_{\alpha^k}(q). \]
\end{corollary}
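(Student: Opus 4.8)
The plan is to deduce the identity by applying to Proposition~\ref{prop:ChP} the linear ``dimension'' functional $\epsilon\colon\QSym\to\ZZ$ determined on the fundamental basis by $\epsilon(F_\delta)=1$ for every composition $\delta$. Before doing so I would rewrite the left-hand side of the corollary as a single generating function over permutations. Since every $w\in\SS_n$ has a unique descent composition (the $\alpha$ with $D(\alpha)=D(w)$), and since $r_\alpha(q)=\sum_{w:\,D(w)=D(\alpha)}q^{\inv(w)}$ by definition, summing over all $\alpha$ with $\beta\cleq\alpha\cleq\gamma$ partitions the set $\{w:\,D(\beta)\subseteq D(w)\subseteq D(\gamma)\}$ according to descent set, so that
\[
\sum_{\beta\cleq\alpha\cleq\gamma} r_\alpha(q)=\sum_{\substack{w\in\SS_n\\ D(\beta)\subseteq D(w)\subseteq D(\gamma)}}q^{\inv(w)}.
\]

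Next I set $\tilde\alpha=\alpha^1\oplus\cdots\oplus\alpha^k$, so that in the notation of Proposition~\ref{prop:ChP} one has $D_0(\tilde\alpha)=D(\alpha^1\rhd\cdots\rhd\alpha^k)=D(\beta)$ and $D_1(\tilde\alpha)=D(\alpha^1\cdots\alpha^k)=D(\gamma)$, and I apply $\epsilon$ to both sides of that proposition. On the left this reproduces exactly the $q^{\inv}$-sum displayed above. On the right the terms are indexed independently by $z\in\SS^{n_1,\ldots,n_k}$ and by tuples $(u_1,\ldots,u_k)$ with $u_i\in\SS_{n_i}$ and $D(u_i)=D(\alpha^i)$, each contributing $q^{\inv(z)+\inv(u_1)+\cdots+\inv(u_k)}$; as these indices range independently the sum factors as $\bigl(\sum_{z}q^{\inv(z)}\bigr)\prod_i\bigl(\sum_{u_i}q^{\inv(u_i)}\bigr)$. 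Finally I recognize $\sum_{z\in\SS^{n_1,\ldots,n_k}}q^{\inv(z)}=\qbin{n}{n_1,\ldots,n_k}{q}$, using that $\SS^{n_1,\ldots,n_k}=\{z:\,D(z)\subseteq D((n_1,\ldots,n_k))\}$ together with the stated formula for the $q$-multinomial coefficient, and $\sum_{u_i}q^{\inv(u_i)}=r_{\alpha^i}(q)$ by the definition of the $q$-ribbon number, which gives the claimed right-hand side.

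The main obstacle is justifying the factorization of $\epsilon$ applied to the right-hand side of Proposition~\ref{prop:ChP}: the functional $\epsilon$ is linear but \emph{not} an algebra map, so one must not read the product $\prod_i$ as an unrelated shuffle product in $\QSym$, which would introduce a spurious multinomial factor. The correct viewpoint is that $\epsilon\circ\mathrm{Ch}_q$ records the graded dimension $\sum_i q^i\dim_\FF(M_i/M_{i+1})$ attached to the length filtration, and the right-hand side of Proposition~\ref{prop:ChP} is precisely $\mathrm{Ch}_q(\bP_{\tilde\alpha})$ computed from the coset-and-tensor basis of the induced module $(\bP_{\alpha^1}\otimes\cdots\otimes\bP_{\alpha^k})\uparrow\,_{\H_{n_1,\ldots,n_k}(0)}^{\H_n(0)}$, whose elements are indexed exactly by the independent data $(z;u_1,\ldots,u_k)$ above with additive $\inv$-statistic. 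Once this independence is recorded the factorization is immediate, and the remaining work is only matching each factor to its definition.
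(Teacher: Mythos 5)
Your proof is correct and takes essentially the same approach as the paper, whose entire proof is the instruction to specialize all fundamental quasisymmetric functions to $1$ in Proposition~\ref{prop:ChP}. Your extra care in reading the right-hand side of that proposition through the coset-and-tensor basis of the induced module, with the independent index data $(z;u_1,\ldots,u_k)$ and additive $\inv$-statistic---rather than as a literal $\QSym$ product, under which the sum-of-coefficients functional would not factor---is a worthwhile clarification of the paper's terse argument, but it is the same route.
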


\begin{proof}
Specialize all fundamental quasisymmetric functions to $1$ in Proposition~\ref{prop:ChP}.
\end{proof}

For example, if $\beta = (2,3,1,2)$ and $\gamma=(2,1,2,1,1,1)$ then 
\[ r_{2312}(q) + r_{21212}(q) + r_{23111}(q) + r_{212111}(q) = \qbin{8}{3,4,1}{q} r_{21}(q) r_{211}(q) r_1(q). \]

One can easily extend the results in this subsection to type B and D.


\end{document}